\DeclareMathOperator\arctanh{arctanh}\usepackage{amssymb}
\tikzstyle arrowstyle=[scale=1]
\tikzstyle directed=[postaction={decorate,decoration={markings,
    mark=at position .55 with {\arrow[arrowstyle]{stealth}}}}]
\tikzstyle ddirected=[postaction={decorate,decoration={markings,
    mark=at position .45 with {\arrow[arrowstyle]{stealth}},
    mark=at position .55 with {\arrow[arrowstyle]{stealth}}}}]
\tikzstyle reverse directed=[postaction={decorate,decoration={markings,
    mark=at position .55 with {\arrowreversed[arrowstyle]{stealth};}}}]
\tikzstyle reverse ddirected=[postaction={decorate,decoration={markings,
    mark=at position .55 with {\arrowreversed[arrowstyle]{stealth};},
    mark=at position .65 with {\arrowreversed[arrowstyle]{stealth};}}}]
\theoremstyle{plain}
\newtheorem{para}{}[section]
\newtheorem{thm}[para]{Theorem}
\newtheorem{prop}[para]{Proposition}
\newtheorem{lem}[para]{Lemma}
\newtheorem{cor}[para]{Corollary}
\newtheorem{remark}[para]{Remark}
\newtheorem*{stremark}{Remark}
\newtheorem{fact}[para]{Fact}
\newtheorem{question}[para]{Question}
\theoremstyle{remark}
\theoremstyle{definition}
\newtheorem{dfn}[para]{Definition}
\newtheorem{example}[para]{Example}
\newtheorem{conjecture}[para]{Conjecture}
    \newtheoremstyle{TheoremNum}
        {5pt}{5pt}        %% space between body and thm
        {\itshape}                      %%% Thm body font
        {}                              %%% Indent amount (empty = no indent)
        {\bfseries}                     %%% Thm head font
        {.}                             %%% Punctuation after thm head
        { }                             %%% Space after thm head
        {\thmname{#1}\thmnote{ \bfseries #3}}%%% Thm head spec
    \theoremstyle{TheoremNum}
    \newtheorem{thmn}{Theorem}
\newcommand{\co}{\colon\thinspace}
\newcommand{\C}{\mathbb{C}}
\renewcommand{\H}{\mathbb{H}}
\newcommand{\Q}{\mathbb{Q}}
\newcommand{\R}{\mathbb{R}}
\newcommand{\Z}{\mathbb{Z}}
\newcommand{\Sth}{\mathbb{S}^3}
\newcommand{\orb}{\mathcal{O}}
\newcommand{\orbQ}{\mathcal{Q}}
\newcommand{\thick}{\geq \epsilon}
\newcommand{\thickDN}{\geq \epsilon/d_{N}}
\newcommand{\ZZ}{\mathbb{Z}}
\definecolor{bettergreen}{rgb}{0,0.6,0.4}
\definecolor{purple}{rgb}{0.4,0,0.6}
\newcommand{\nbd}{n}
\newcommand{\del}{\partial}
\newcommand{\gbar}{\bar{\gamma}}
\newcommand{\Qf}{Q_{k\gbar}}
\newcommand{\cone}{\mathrm{cone}}
\newcommand\restr[2]{{% we make the whole thing an ordinary symbol
  \left.\kern-\nulldelimiterspace % automatically resize the bar with \right
  #1 % the function
  \vphantom{\big|} % pretend it's a little taller at normal size
  \right|_{#2} % this is the delimiter
  }}
\newcommand\vol{\mathrm{vol}}
\author[E. Chesebro]{Eric Chesebro}
 \address{Department of Mathematical Sciences, University of Montana, Missoula, MT}
\email[]{Eric.Chesebro@mso.umt.edu}
\author[J. Deblois]{Jason Deblois}
 \address{Department of Mathematics, University of Pittsburgh, Pittsburgh, PA}
\email[]{jdeblois@pitt.edu}
\author[N. Hoffman]{Neil R Hoffman}
 \address{Department of Mathematics, Oklahoma State University, Stillwater, OK}
\email[]{neil.r.hoffman@okstate.edu}
\author[C. Millichap]{Christian Millichap}
 \address{Department of Mathematics, Furman University, Greenville, SC}
\email[]{christian.millichap@furman.edu}
\author[P. Mondal]{Priyadip Mondal}
 \address{Department of Mathematics, University of Pittsburgh, Pittsburgh, PA}
\email[]{prm50@pitt.edu}
\author[W. Worden]{William Worden}
 \address{Department of Mathematics, Rice University, Houston, TX}
\email[]{william.worden@rice.edu}
\begin{document}

\title[Dehn surgery and hyperbolic knot complements without hidden symms]{Dehn surgery and hyperbolic knot complements without hidden symmetries}

\maketitle

\begin{abstract}
Neumann and Reid conjecture that there are exactly three knot complements which admit hidden symmetries. This paper establishes several results that provide evidence for the conjecture. Our main technical tools
provide obstructions to having infinitely many fillings of a cusped manifold produce knot complements admitting hidden symmetries. 
Applying these tools, we show for any two-bridge link complement, at most finitely many fillings of one cusp can be covered by knot complements admitting hidden symmetries. 
We also show that the figure-eight knot complement is the unique knot complement with volume less than $6v_0 \approx 6.0896496$
that admits hidden symmetries. We then conclude with two independent proofs that among hyperbolic knot complements only the figure-eight knot complement can admit hidden symmetries and cover a filling of the two-bridge link complement $\Sth\setminus 6^2_2$. Each of these proofs shows that the technical tools established earlier can be made effective. 
\end{abstract}

\section{Introduction}\label{sec:intro}

A  \textit{hidden symmetry} of a manifold (or orbifold) $M$ is a homeomorphism between finite-sheeted covers of $M$ that does not descend to $M$. Hidden symmetries play an important role in the study of locally symmetric spaces, where work of Margulis shows that arithmetic rank-one lattice quotients are characterized by the property of having infinitely many (up to a natural equivalence) \cite{Margulis1991}. This paper focuses on hidden symmetries of hyperbolic $3$-orbifolds, which are intimately related to the study of their covering maps and commensurability classes.  For instance, a hyperbolic knot complement in $\Sth$ with no hidden symmetries has at most two other such knot complements in its commensurability class \cite[Theorem 1.2]{BBoCWaGT}.

In fact, hidden symmetries seem to highlight some fundamental difference between complements of knots in $\Sth$ and other non-compact finite volume hyperbolic 3-orbifolds. (Moving forward, we reserve the term ``knot/link complement'' for the complements of knots/links \textit{in $\Sth$}.)  While it is not uncommon for members of the latter class to have hidden symmetries, only three hyperbolic knot complements are known to have them: that of the figure-eight, which is the only arithmetic knot complement \cite{ReidFig8}, and those of the two dodecahedral knots of Aitchison and Rubinstein \cite{AiRu}.  Moreover, there are (infinitely) many without hidden symmetries. For this and other reasons, in 1995 Neumann and Reid proposed:

\begin{conjecture}[Problem 3.64(A), \cite{KirbyList}]\label{NR conjecture} The only hyperbolic knot complements without hidden symmetries are the three already known.\end{conjecture}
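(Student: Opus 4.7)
My plan is to attack Conjecture 1.1 by combining commensurator rigidity with the Dehn-filling obstructions announced in the abstract. The starting point is the theorem of Neumann and Reid that a non-arithmetic hyperbolic knot complement $\Sth\setminus K$ admits a hidden symmetry if and only if the minimal orbifold $\orbQ := \Hth/\mathrm{Comm}(\pi_1(\Sth\setminus K))$ has a \emph{rigid} cusp, namely one whose cross-section is a Euclidean $(2,3,6)$ or $(3,3,3)$ turnover; the arithmetic case is forced to be the figure-eight by Reid's theorem. So the conjecture reduces to classifying the rigid-cusped orbifolds $\orbQ$ that are covered by a knot complement, and showing that beyond the figure-eight only the two dodecahedral examples arise.

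The first step is to realize each candidate $\orbQ$ as a hyperbolic Dehn filling of a cusped parent orbifold $N$ that retains a rigid cusp and carries at least one additional cusp available for filling. The paper's headline theorem --- that for a two-bridge link complement $N$, only finitely many fillings of one cusp yield an orbifold covered by a knot complement with hidden symmetries --- then restricts the candidate fillings of $N$ to a finite list. The plan is to extend this obstruction from the two-bridge setting to every cusped parent $N$ that could arise as such a drilling, and then to combine it with J\o rgensen--Thurston to bound both the volume and the topological type of $N$. Below the threshold $6v_0$ the paper already completes the classification, so I would try to upgrade this to an inductive scheme: for any candidate $\orbQ$ above the threshold, produce a cusped parent $N$ of strictly smaller volume, run the filling obstruction on $N$, and either output a known example or a parent that must itself be classified recursively against a list one has already built.

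The main obstacle, and the reason the conjecture remains open, is producing an absolute effective upper bound on $\vol(N)$; without one, the list of possible parents is a priori infinite and the recursion need not terminate. Closing this gap --- equivalently, proving that any parent orbifold of a would-be exotic $\Sth\setminus K$ with hidden symmetries has volume at most some explicit $V_0$ --- is where I expect to be stuck and where a genuinely new idea beyond the machinery in the paper is required. A complementary shorter-range tactic, illustrated at the end of the paper for $\Sth\setminus 6^2_2$, is to turn the finiteness obstruction into a computationally effective slope condition on one specific parent and rule out exotic fillings by rigorous interval arithmetic (e.g.\ SnapPy/HIKMOT). Iterating this effective version over the eventual finite list of parents would finish the proof once the uniform volume bound is established; so in summary the proposal is (i) commensurator rigidity, (ii) drilling to a parent with a rigid cusp, (iii) a universal finiteness obstruction on fillings, (iv) an effective volume bound on parents, and (v) a computer-assisted terminal verification, with step (iv) being the principal hurdle.
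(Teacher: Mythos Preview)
The statement you are addressing is a \emph{conjecture}, not a theorem: the paper does not prove it, and nowhere claims to. The paper's contribution is a collection of partial results (\Cref{main_theorem}, \Cref{corollary2}, \Cref{thm:volume_bound_gen}, \Cref{earem}, \Cref{sixtwotwo}) that provide evidence for it and rule out various families of potential counterexamples, but the conjecture itself remains open. So there is no ``paper's own proof'' to compare against.

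To your credit, your proposal is not really a proof either, and you say so: you explicitly identify step (iv), the absolute effective volume bound on the parent orbifold, as the point where ``a genuinely new idea beyond the machinery in the paper is required.'' That is an accurate assessment of where the field stands. Your outline is a reasonable high-level summary of one plausible strategy, and steps (i)--(iii) and (v) are indeed what the paper does in restricted settings. But you should not present this as a proof proposal for the conjecture; it is a research plan with a known gap that is, as far as anyone knows, the entire difficulty.

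Two smaller points. First, rigid cusps also include the $S^2(2,4,4)$ type, not just $(2,3,6)$ and $(3,3,3)$; see \Cref{(non)rigid}. Second, your step (ii) --- ``realize each candidate $\orbQ$ as a hyperbolic Dehn filling of a cusped parent orbifold $N$ that retains a rigid cusp'' --- is not automatic for an individual $\orbQ$; the paper's \Cref{main_theorem} produces such an $N$ only from an infinite \emph{sequence} of fillings, via geometric convergence. For a single $\orbQ$ there is no canonical drilling, and arranging one that both lowers volume and preserves a rigid cusp is part of what makes the problem hard.
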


The work of   Neumann and Reid  \cite[Proposition 9.1]{NeumReid}  provides one of the most striking pieces of evidence for this conjecture and the foundation for much subsequent work on it. This asserts that a hyperbolic knot complement admits hidden symmetries if and only if it covers a rigid-cusped orbifold (see \Cref{(non)rigid}). As an illustration of how exceptional such covers are, now-standard computational methods (SnapPy \cite{SnapPy} and Sage \cite{sagemath}) can be used to test the roughly 330,000 knot complements up to fifteen crossings, showing numerically that the figure-eight knot complement is the only one of these to have one \cite{Dunfield}. (The dodecahedral knots have crossing number $20$.)

 Applying this condition to infinite families of knot complements has proven more challenging, although there have been significant successes with, for instance, the two-bridge knots \cite{ReidWalsh} and other more specialized families \cite{Hoffman_3comm}, \cite{Hoffman_hidden},  \cite{MacMat}, \cite{Millichap}. More recently, two independent groups of the present authors turned attention to ruling out hidden symmetries among {certain large} families of knot complements, giving more broadly applicable criteria for doing so in \cite{CheDeMonda} and \cite{HMW19}. Motivated by these results, a subset of the authors in \cite{CheDeMonda} proposed the following ``generic'' (and weaker) version of \Cref{NR conjecture}:

\begin{conjecture}\label{main_conjecture}  For any $R>0$, at most finitely many hyperbolic knot complements have hidden symmetries and volume less than $R$.\end{conjecture}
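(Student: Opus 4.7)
The plan is to combine the Neumann-Reid covering criterion with a Jorgensen-Thurston geometric limits argument and the filling obstructions developed in this paper. By \cite[Proposition 9.1]{NeumReid}, a hyperbolic knot complement $K$ admits hidden symmetries if and only if it covers a rigid-cusped orbifold, so it suffices to bound the number of one-cusped hyperbolic 3-manifolds of volume at most $R$ that possess such a cover.

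Suppose for contradiction that infinitely many distinct knot complements $K_i$ satisfy $\vol(K_i) \leq R$ and admit hidden symmetries. The Jorgensen-Thurston theorem, combined with the standard fact (via Boroczky-Meyerhoff horoball bounds) that every hyperbolic 3-manifold of volume at most $R$ has at most $R/v_c$ cusps for a uniform constant $v_c > 0$, implies that, after passing to a subsequence, the $K_i$ are realized by Dehn filling on all but one cusp of one of finitely many parent cusped hyperbolic 3-manifolds $N_1, \ldots, N_m$ of volume at most $R$. Fix one such parent $N = N_j$, and assume without loss of generality that infinitely many of the $K_i$ arise as fillings of $N$.

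If $N$ has exactly two cusps, the paper's main technical obstruction---that at most finitely many fillings of one cusp of a cusped hyperbolic manifold can produce knot complements admitting hidden symmetries---directly yields a contradiction. If $N$ has three or more cusps, one would need an iterated or multi-cusp version of the same obstruction, ruling out infinitely many fillings (on all but one cusp) producing knot complements with hidden symmetries, to obtain the contradiction in general. Combined with the preceding step this would prove \Cref{main_conjecture}.

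The principal obstacle is precisely this last step. The paper's filling obstructions are established in detail only for specific two-cusped parents---namely the two-bridge link complements---and are made fully effective only for $\Sth \setminus 6^2_2$. Extending them to an \emph{arbitrary} two-cusped hyperbolic parent is already non-trivial, and extending them to parents with three or more cusps (where the filling data lies in a lattice of rank $\geq 2$ and slopes on distinct cusps can conspire) requires genuinely new input. The difficulty is essentially that one must control, in full generality, how the covering structure to rigid-cusped orbifolds interacts with Dehn filling, without imposing any arithmetic or commensurator-class hypothesis on the parent. Overcoming this obstacle is where a proof of \Cref{main_conjecture} must go substantially beyond what is achieved here.
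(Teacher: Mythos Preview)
The statement you are attempting to prove is labeled a \emph{conjecture} in the paper, and the paper does not prove it. There is therefore no ``paper's own proof'' to compare against; the paper establishes only partial results toward the conjecture (e.g.\ \Cref{corollary2} for two-bridge links, and \Cref{thm:volume_bound_gen} for the regime $R\le 6v_0$). Your proposal is accordingly not a proof but a proof \emph{strategy}, and you yourself identify the principal gap in its final paragraph.

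That said, your proposal overstates what the paper's obstructions actually deliver even in the two-cusped case. You write that for a two-cusped parent $N$, ``the paper's main technical obstruction\ldots directly yields a contradiction.'' This is not correct. \Cref{main_theorem} does not say that at most finitely many one-cusp fillings of an arbitrary two-cusped $N$ can cover rigid-cusped orbifolds; it says only that if infinitely many do, then $N$ itself covers an orbifold with a single rigid cusp whose preimage in $N$ is connected. The paper's \Cref{two_cusped_example} exhibits a two-cusped hyperbolic manifold (the census manifold \texttt{t12046}) for which the hypotheses and conclusion of \Cref{main_theorem} are genuinely realized by an infinite sequence of one-cusped fillings. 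So the two-cusped case is \emph{not} handled by the obstruction alone; the passage from ``$N$ covers a rigid-cusped orbifold'' to ``contradiction'' requires additional input specific to the family at hand (in \Cref{corollary2}, the Millichap--Worden classification of hidden symmetries of two-bridge links, together with case-by-case arguments for the arithmetic ones). Your final paragraph partially concedes this, but it is inconsistent with the earlier claim that the two-cusped case is settled.

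In summary: the reduction to Dehn fillings of finitely many parents via J{\o}rgensen--Thurston is correct and is exactly the content of \Cref{prop1}, but from there the conjecture remains genuinely open, including for general two-cusped parents.
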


As we wrote in \cite{CheDeMonda}, this conjecture can be reformulated in terms of Dehn surgery using the J\o rgensen--Thurston theory: it fails if and only if a sequence of hyperbolic knot complements with hidden symmetries converges to a link complement (cf.~\Cref{prop1}).  Here and in the rest of the paper, we say a sequence $\{M_n\}$ of hyperbolic manifolds or orbifolds \textit{converges} to a manifold $N$ if each $M_n$ is obtained from $N$ by hyperbolic Dehn filling, and all filling coefficients approach $\infty$ as $n\to\infty$. We call this ``convergence'' because it is equivalent to the sequence $\{M_n\}$ \textit{converging geometrically} to $N$, in the sense defined by Thurston \cite{Th_notes}.

Our aims in this paper are twofold. The first half is devoted to \Cref{main_conjecture}. Our main tool for analyzing this conjecture is \Cref{main_theorem}, which describes how a cover to a rigid-cusped orbifold relates to the limit of a convergent sequence of one-cusped manifolds that each have this property. We apply this result in \Cref{corollary2} to prove an important case of the conjecture, that a two-bridge link complement cannot be the limit of a counterexample sequence. On the other hand, \Cref{sec:CA} exhibits some ``cautionary examples'' which limit the conjecture's scope and hint at directions to search for counterexamples.

The paper's second half focuses on ``effectivization'': placing \textit{explicit} bounds on the number of members that may have hidden symmetries within a given family of knot complements. Such a bound rules out the given family from producing counterexamples to \Cref{main_conjecture}, but it also makes useful progress toward evaluating \Cref{NR conjecture} there. In \Cref{sec:effectivization} we rule out counterexamples to \Cref{NR conjecture} among \textit{all} low-volume knot complements, with \Cref{thm:volume_bound_gen}, and among all but an explicitly constrained list of fillings of one cusp of certain low-volume two-component link complements, in \Cref{earem}. Furthermore, via \Cref{cor:agol_bound}, we can place a universal upper bound of $84$ on the number of possible fillings of these two-component links that could be covered by a knot complement admitting hidden symmetries. In \Cref{sec:sixtwotwo}, we enumerate this list for the $6^2_2$-link complement and prove the complete classification \Cref{sixtwotwo} of those covered by a knot complement with hidden symmetries. We finish in \Cref{defvar} by giving a deformation variety-based proof of the same result.

In the rest of this introduction we state our main results and expand on their context. For the sake of readability we state a streamlined version of \Cref{main_theorem} below.

\begin{thm}\label{main_theorem_intro}
Suppose $N$ is a complete, multi-cusped hyperbolic $3$-manifold of finite volume and $\{M_n\}$ is a sequence of one-cusped manifolds converging to $N$. If, for each $n$, there is a covering map $\phi_n\co M_n\to\orbQ_n$ to a rigid-cusped orbifold $\orbQ_n$, then there is a multi-cusped orbifold $\orbQ$ with a single rigid cusp and a covering map $\phi\co N\to\orbQ$. Moreover, we may choose $\orbQ$ and pass to a subsequence so that the $\orbQ_n$ converge to $\orbQ$ and $\phi$ is a restriction of
$\phi_n$ for every $n$.
\end{thm}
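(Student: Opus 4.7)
The plan is to promote the hypothesized geometric convergence $M_n \to N$ to convergence of the covers $\phi_n$ along a subsequence, then read off the cusp structure of the limit. Throughout, I would work with holonomy representations in $\PSLTC$: let $\Gamma_n = \rho_n(\pi_1(M_n))$, $\tilde\Gamma_n$ the holonomy image of $\pi_1(\orbQ_n)$, and $\Gamma_\infty = \rho_\infty(\pi_1(N))$, so that $\Gamma_n \subset \tilde\Gamma_n$ with index $d_n$.

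\textbf{Step 1 (bounding degrees).} Because $M_n$ is one-cusped and every cusp of $\orbQ_n$ must be covered, $\orbQ_n$ has a single cusp, which is rigid by hypothesis. The volume of any hyperbolic $3$-orbifold with a rigid cusp is bounded below by a universal $v_{\min} > 0$ (Margulis, applied to the standard cross-section of fixed area for each of the three rigid types), so $d_n \le \vol(M_n)/v_{\min}$ is uniformly bounded. Pass to a subsequence on which $d_n = d$.

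\textbf{Step 2 (extracting the limit cover).} After conjugating so that $\Gamma_n \to \Gamma_\infty$ algebraically, I extract a convergent subsequence of the $\tilde\Gamma_n$. Index $d$ yields coset representatives $\{1, g_{2,n}, \ldots, g_{d,n}\}$ of $\Gamma_n$ in $\tilde\Gamma_n$, which I choose to have uniformly bounded displacement: pick a basepoint $\bar q_n$ in the $\epsilon$-thick part of $\orbQ_n$, which has diameter bounded by some $D$ independent of $n$ since $\vol(\orbQ_n) \le \vol(N)/d$ and injectivity radius is $\ge \epsilon/2$ there, and lift to $\tilde q_n \in \H^3$ in a compact set. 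The $d$ preimages of $\bar q_n$ in $M_n$ have lifts in $\H^3$ of the form $g_{j,n}\tilde q_n$ within distance $D$ of $\tilde q_n$. Compactness in $\PSLTC$ then gives a further subsequence with $g_{j,n} \to g_{j,\infty}$. Setting $\tilde\Gamma_\infty := \langle \Gamma_\infty, g_{2,\infty}, \ldots, g_{d,\infty} \rangle$, standard arguments (Jørgensen's inequality and the discreteness of geometric limits of uniform-covolume Kleinian groups) show $\tilde\Gamma_\infty$ is discrete with $[\tilde\Gamma_\infty : \Gamma_\infty] = d$. Taking $\orbQ := \H^3/\tilde\Gamma_\infty$ and $\phi\colon N \to \orbQ$ from the inclusion $\Gamma_\infty \subset \tilde\Gamma_\infty$ gives the cover; the convergence $\tilde\Gamma_n \to \tilde\Gamma_\infty$ upgrades to the geometric convergence $\orbQ_n \to \orbQ$, and the ``$\phi$ is a restriction of $\phi_n$'' assertion is built into the construction, since $\phi_n$ is recovered from $\phi$ by the Dehn filling data relating $\Gamma_n$ to $\Gamma_\infty$ and $\tilde\Gamma_n$ to $\tilde\Gamma_\infty$.

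\textbf{Step 3 (cusp structure, the main obstacle).} The parabolic fixed point of $\tilde\Gamma_n$ and its maximal parabolic subgroup---a rank-$2$ rigid crystallographic group---pass to the limit, yielding a rigid cusp $C_1$ of $\orbQ$, covered entirely by the single unfilled cusp of $N$ (which accounts for all of the rigid cusp of $\orbQ_n$ under $\phi_n$). The remaining (filled) cusps of $N$ correspond to parabolic fixed points in $\partial \H^3$ that were \emph{not} parabolic in $\Gamma_n$---they arise as limits of short rotation or translation axes during Dehn filling---and therefore lie in $\tilde\Gamma_\infty$-orbits distinct from that of $C_1$, projecting to additional cusps of $\orbQ$. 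Each such new cusp has maximal parabolic subgroup of the form $\Z^2 \rtimes \Z/k$ with $k \in \{1,2,3,4,6\}$ and $k \le d$, the $\Z/k$ factor arising as the ramification of $\phi_n$ at the corresponding core geodesic of the filled solid torus in $\orbQ_n$. The main obstacle is ruling out the rigid cases $k \in \{3,4,6\}$, so that $C_1$ is the unique rigid cusp and $\orbQ$ is genuinely multi-cusped in the ``one rigid, rest not'' sense. I expect this to be proven by tracking the peripheral $\Z^2$ lattice of the corresponding cusp of $N$ as a finite-index sublattice of the limit group: the filling slopes escape to infinity along the subsequence, forcing an infinite family of distinct sublattice inclusions that is incompatible with the finite combinatorics of sublattices of a fixed rigid crystallographic lattice. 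Formalizing this---carefully tracing Euclidean horospherical structures through the geometric limit and the covering $\phi$---is the technical heart of the argument.
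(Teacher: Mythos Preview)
Your approach is genuinely different from the paper's, and while the overall strategy can be made to work, Step~3 misidentifies the mechanism.

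The paper does not argue via convergence of Kleinian groups at all. Instead it uses a drilling and finiteness argument: for $n$ large each $M_n$ is an $(\epsilon,d_N)$-twisted filling of $N$, which guarantees (via thick--thin decomposition) that the cover $\phi_n\co M_n\to\orbQ_n$ restricts to a cover $\widehat\phi_n\co N\to \orbQ_n\setminus\bigsqcup_i\phi_n(\gamma_{i,n})$, where the $\gamma_{i,n}$ are the core geodesics of the filling tori. Since $N$ has only finitely many orbifold quotients, one passes to a subsequence on which these drilled orbifolds are all homeomorphic to a single $\orbQ$, and the covers $\widehat\phi_n$ all agree with a fixed $\phi$. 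Non-rigidity of the new cusps of $\orbQ$ is then immediate: they were produced by drilling closed geodesics, and the tubular neighborhood of a geodesic in an orientable $3$-orbifold is a solid-torus quotient, whose boundary is a torus or $S^2(2,2,2,2)$, never a rigid Euclidean orbifold. This bypasses all the delicate convergence issues you raise in Step~2.

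Your Step~3 argument does not work as stated. You propose to rule out $k\in\{3,4,6\}$ by a finiteness obstruction on sublattices of a rigid crystallographic group, but a rigid Euclidean group has \emph{infinitely} many finite-index torus sublattices, so no such obstruction exists. The actual obstruction is the one above: once you know (from geometric convergence and J\o rgensen--Thurston) that the $\orbQ_n$ are Dehn fillings of $\orbQ$, every cusp of $\orbQ$ that gets filled must be non-rigid, simply because a rigid cusp admits no Dehn filling. Since each $\orbQ_n$ is one-cusped, all cusps of $\orbQ$ other than $C_1$ are filled, hence non-rigid. With this correction your route does go through, though you should also be more careful in Step~2 that the index does not drop in the limit (volume is continuous under geometric convergence, so $\vol(\orbQ)=\lim\vol(\orbQ_n)=\vol(N)/d$ forces $[\tilde\Gamma_\infty:\Gamma_\infty]=d$, but you need geometric and not merely algebraic convergence of $\tilde\Gamma_n$ for this).
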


The full statement of \Cref{main_theorem} expands  on the last sentence above. Specifically, for any $n$, $N$ can be taken as a submanifold of $M_n$ as the complement of the set of core geodesics of the filling tori, and similarly for $\orbQ$ in $\orbQ_n$. \Cref{main_theorem} is an extension of Proposition 2.6 of \cite{HMW19}, by three of the current authors, and is proven along similar lines. It is a powerful tool for showing that converging families of hyperbolic knot complements do not admit hidden symmetries. To start, we  can apply \Cref{main_theorem_intro}  to prove the following:

\begin{thm}\label{corollary1} Suppose that a sequence of knot complements with hidden symmetries converges to a hyperbolic 3-manifold $N$.  Then $N$ is a link complement with hidden symmetries.
\end{thm}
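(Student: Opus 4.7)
The plan is to combine the Neumann--Reid characterization of hidden symmetries with \Cref{main_theorem_intro} to produce a limit cover $N\to\orbQ$ to a rigid-cusped orbifold, and then separately verify that $N$ is a link complement and that this cover cannot be regular; the latter forces $N$ to have hidden symmetries.

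First, since each knot complement $M_n$ has hidden symmetries, Neumann--Reid provides a covering $\phi_n\co M_n\to\orbQ_n$ onto a rigid-cusped orbifold $\orbQ_n$. Feeding $\{\phi_n\}$ into \Cref{main_theorem_intro} and passing to a subsequence, I obtain a multi-cusped orbifold $\orbQ$ with a single rigid cusp together with a covering $\phi\co N\to\orbQ$ that restricts each $\phi_n$. To see that $N$ is a link complement, I use that $M_n$ converges geometrically to $N$: topologically $N$ may be identified with $M_n$ minus the core geodesics $\gamma^n_1,\ldots,\gamma^n_k$ of the filling solid tori. Since $M_n=\Sth\setminus K_n$ is a knot complement, this exhibits $N \cong \Sth\setminus(K_n\cup\gamma^n_1\cup\cdots\cup\gamma^n_k)$ as a link complement.

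For hidden symmetries, I would argue by contradiction. If $N$ had none, then $\mathrm{Comm}(\pi_1 N)=\mathrm{Norm}(\pi_1 N)$, forcing $\pi_1(\orbQ)\subseteq\mathrm{Norm}(\pi_1 N)$ and hence making $\phi\co N\to\orbQ$ a regular cover with deck group $G\leq\mathrm{Isom}(N)$. Let $c\subset N$ be a cusp covering the rigid cusp $c'\cong S^2(p,q,r)$ of $\orbQ$; its stabilizer $G_c\leq G$ realizes a regular cover of Euclidean $2$-orbifolds $c\to c'$, and because $\pi_1(c)\cong\Z^2$ is torsion-free while $\pi_1^{\mathrm{orb}}(c')$ contains a rotation of order $\max(p,q,r)\geq 3$, that torsion lifts to a rotation $g\in G_c\subseteq\mathrm{Isom}(N)$ of order at least $3$ acting on the torus $c$. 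Since $N$ is a hyperbolic link complement in $\Sth$, Mostow rigidity together with the realization of self-isometries of hyperbolic link complements by ambient homeomorphisms of $\Sth$ preserving the link forces $g$ to send the meridian slope of $c$ to plus or minus itself. But a Euclidean rotation of order $\geq 3$ on a torus fixes no nonzero primitive class in $H_1(c;\Z)$ up to sign, yielding the desired contradiction.

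The main technical subtlety I anticipate is the meridian preservation claim for link complements: for knot complements this follows immediately from $H_1\cong\Z$ being generated by the meridian, but in the multi-cusped setting it requires either invoking a general theorem on symmetries of hyperbolic link complements in $\Sth$ or exploiting the specific embedding of $N$ produced from the $M_n$.
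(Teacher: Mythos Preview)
Your setup matches the paper's: Neumann--Reid gives covers $\phi_n\co M_n\to\orbQ_n$ to rigid-cusped orbifolds, \Cref{main_theorem} (or its streamlined form) produces a limit cover $\phi\co N\to\orbQ$, and $N$ is a link complement since it embeds in each $M_n$ with complement a union of circles.

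Where you diverge is in showing $\phi$ is non-regular. You try to argue directly on $N$: a regular cover would give an order-$\geq 3$ rotation of a torus cusp of $N$, which cannot preserve the meridian up to sign. You correctly flag the gap---meridian preservation is automatic for knot complements (since the meridian generates $H_1$) but \emph{not} for link complements in general; indeed there exist distinct links with homeomorphic complements, so no general theorem of the sort you want exists.

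The paper sidesteps this entirely with a short algebraic trick. From \Cref{main_theorem}(ii) one gets a commutative square of fundamental groups
\[
\xymatrix{
\pi_1(N) \ar[d] \ar[r] & \pi_1(M_n)\ar[d]\\
\pi_1^{orb}(\orbQ) \ar[r] & \pi_1^{orb}(\orbQ_{n})
}
\]
with horizontal maps surjective. If $\pi_1(N)$ were normal in $\pi_1^{orb}(\orbQ)$, its image $\pi_1(M_n)$ would be normal in $\pi_1^{orb}(\orbQ_n)$---but $M_n$ is a \emph{knot} complement covering a rigid-cusped orbifold, so Neumann--Reid's \cite[Prop.~9.1]{NeumReid} forbids this. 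Thus the meridian issue never arises: the argument is pushed down to the knot complement $M_n$, where it is standard. Your second suggested fix (``exploiting the specific embedding of $N$ produced from the $M_n$'') is exactly this move, phrased geometrically as extending the deck group of $\phi$ to an action on $M_n$; the paper's algebraic version is the clean way to execute it.
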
  

Millichap--Worden showed that no hyperbolic two-bridge link complement has hidden symmetries, except the four arithmetic ones \cite{MilliWord}.  Combining their result with \Cref{corollary1} immediately implies that no non-arithmetic two-bridge link complement is the limit of a sequence of knot complements with hidden symmetries. This is significant because both components of any two-component two-bridge link are unknotted, so each such link \textit{is} the limit of a sequence of knot complements.

In fact, by working harder here we obtain the result below, a significantly stronger consequence of \Cref{main_theorem_intro} and \cite{MilliWord}. Note first that its hypotheses {apply to both} arithmetic {and non-arithmetic} two-bridge link complements. 

\begin{thm}\label{corollary2} If $N$ is the complement of a two-component hyperbolic two-bridge link, then at most finitely many orbifolds obtained by filling one cusp of $N$ are covered by knot complements with hidden symmetries.
\end{thm}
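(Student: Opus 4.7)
My plan is a contradiction argument combining Theorem~\ref{main_theorem_intro} with the Millichap--Worden classification \cite{MilliWord}. Suppose there are infinitely many distinct slopes $\alpha_n$ on a distinguished cusp of $N$ such that each filling $M_n := N(\alpha_n,-)$ is covered by a knot complement $K_n$ with hidden symmetries. After a subsequence, $\alpha_n \to \infty$ and $\{M_n\}$ converges to $N$ in the paper's sense. To invoke Theorem~\ref{main_theorem_intro} I need each $M_n$ to cover a rigid-cusped orbifold. By Neumann--Reid \cite[Proposition~9.1]{NeumReid}, hidden symmetries of $K_n$ yield some rigid-cusped orbifold $\orbQ_n^\ast$ covered by $K_n$. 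If $K_n$ is non-arithmetic, Margulis provides a unique minimal orbifold $\orbQ_n$ in its commensurability class; its cusps, being covered by the rigid cusps of $\orbQ_n^\ast$, are themselves rigid (the rigid orientable Euclidean $2$-orbifolds $S^2(2,3,6)$, $S^2(2,4,4)$, $S^2(3,3,3)$ admit no non-rigid orientable Euclidean $2$-orbifold as a quotient). Since $M_n$ is commensurable with $K_n$, it also covers $\orbQ_n$. Borel's finiteness theorem ensures only finitely many arithmetic hyperbolic $3$-manifolds have volume at most $\vol(N)$, so after discarding finitely many indices we may assume each $M_n$ (and hence each $K_n$, since arithmeticity is commensurability invariant) is non-arithmetic.

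Theorem~\ref{main_theorem_intro} applied to $M_n \to N$ with rigid-cusped covers $M_n \to \orbQ_n$ then yields a multi-cusped orbifold $\orbQ$ with a single rigid cusp and a covering $N \to \orbQ$. Thus $N$ admits hidden symmetries. If $N$ is non-arithmetic this contradicts \cite{MilliWord}, completing the proof in this case.

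The main obstacle is the arithmetic case, where $N$ is one of the four arithmetic two-bridge link complements and the conclusion ``$N$ has hidden symmetries'' is consistent with arithmeticity, so no immediate contradiction arises. I would handle it in two subcases. When $M_n$ is arithmetic, commensurability forces $K_n$ arithmetic, hence $K_n = S^3 \setminus 4_1$ by \cite{ReidFig8}, and $\vol(M_n) \le \vol(4_1) = 2v_0$ conflicts with $\vol(M_n) \to \vol(N) > 2v_0$ for $n$ large. When $M_n$ is non-arithmetic, the main argument still produces a rigid-cusped $\orbQ$ covered by arithmetic $N$; here one must use the explicit structure of the four arithmetic commensurability classes together with the constraint that the torus cusp of the knot complement $K_n$ covers the rigid cusp of $\orbQ_n = \orbQ(\beta_n,-)$ to bound the admissible slopes $\beta_n$ on the non-rigid cusp of $\orbQ$. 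Converting this rigidity into an effective finiteness statement is the technical heart of the argument and is where I expect the real work to lie.
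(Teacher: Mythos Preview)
Your non-arithmetic case matches the paper's argument. One wrinkle: the step ``thus $N$ admits hidden symmetries'' is not justified---a \emph{normal} cover $N\to\orbQ$ with $\orbQ$ rigid-cusped produces symmetries of $N$, not hidden ones, and nothing in \Cref{main_theorem_intro} rules out normality here (the non-normality argument in the proof of \Cref{corollary1} uses that the fillings themselves are knot complements, which your $M_n$ are not). Fortunately the step is also unnecessary: what the paper extracts from \cite[Th.~3.6]{MilliWord} is that the lifted cusp triangulation of a non-arithmetic two-bridge link complement has no rotational symmetry of order greater than two, so $N$ cannot cover \emph{any} rigid-cusped orbifold. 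That is precisely the contradiction you need, directly from the output of \Cref{main_theorem_intro}.

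The genuine gap is the arithmetic case, which you correctly flag but do not resolve. Your sketch for subcase~B---bounding the filling slopes $\beta_n$ on the non-rigid cusp of $\orbQ$---will not succeed without substantial new input: \Cref{two_cusped_example} and \Cref{ex:three_cusped_example} exhibit exactly the phenomenon you would have to rule out, namely a rigid-plus-non-rigid cusped orbifold with infinitely many fillings each covered by a one-cusped manifold that in turn covers a rigid-cusped orbifold. The paper takes a completely different route. There are three (not four) arithmetic two-component two-bridge links---$5^2_1$, $6^2_2$, $6^2_3$---and each is dispatched via \cite[Cor.~1.3]{CheDeMonda}. For $6^2_3$, commensurable with $\mathrm{PSL}_2(\mathcal{O}_7)$, condition~(1) of that corollary applies. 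For $5^2_1$ and $6^2_2$, condition~(2) applies: one verifies that the cusp-parameter function $\tau_c$ is non-constant on the subvariety $V_0$ of the deformation variety where the unfilled cusp remains complete, using \cite[\S 6]{NeumReid} and \cite[Prop.~2.3]{CheDeMonda} respectively. This deformation-variety obstruction, not a slope count on $\orbQ$, is the ingredient your proposal is missing.
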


Here, for a filling coefficient $(p,q)$ such that $\gcd(p,q) > 1$, $(p,q)$-filling produces an orbifold with the core of the filling solid torus as its singular locus. Because there are many branched covers $\Sth\to\Sth$ that branch over an unknot, many orbifold fillings of one cusp of a hyperbolic two-bridge link complement are covered by a hyperbolic knot complement; see \Cref{prop:linking_number}. \Cref{corollary2} thus covers a much larger class of knots than only those produced by \textit{manifold} filling.

We now turn to our ``cautionary examples''  of \Cref{sec:CA}. We first consider \Cref{two_cusped_example}  which comes from \cite{NR_rigidity} and describes a $2$-cusped hyperbolic $3$-manifold  and a sequence of fillings of this manifold that satisfy the hypotheses and conclusion of \Cref{main_theorem}. Building off of this, in  \Cref{ex:three_cusped_example} we use a careful analysis of orbifolds covered by the Borromean rings complement $N$ to describe a sequence of one-cusped hyperbolic $3$-manifolds converging to $N$, each of which covers an orbifold with a rigid cusp. The $M_n$ are not knot complements, but they do satisfy the hypotheses and conclusion of \Cref{main_theorem}. They show that \Cref{main_conjecture} does not hold in the general setting of $1$-cusped hyperbolic $3$-manifolds. Both of these examples highlight interesting phenomenon in the context of geometric isolation described in \Cref{isooo}.

Our first effectivization result, from \Cref{sec:effectivization}, implies in particular that there is no counterexample to \Cref{NR conjecture} among manifolds with volume at most $6v_0$. Here and henceforth, $v_0$ denotes the volume of a regular ideal hyperbolic tetrahedron ($\approx 1.0149416$). The proof of  this results plays off results of Hoffman that give lower bounds on the degree of covers to rigid-cusped orbifolds against Adams' work classifying the lowest-volume (rigid-) cusped orbifolds. 

\newcommand\VolumeBoundGen{If a manifold $M$ has volume less than or equal to $6v_0$ and is covered by a knot complement admitting hidden symmetries, then $M$ is the figure-eight knot complement (and the cover is trivial).}

\theoremstyle{plain}
\newtheorem*{volboundthm}{Theorem \ref{thm:volume_bound_gen}}
\begin{volboundthm}\VolumeBoundGen\end{volboundthm}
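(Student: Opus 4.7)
The plan is to factor the cover $K \to M$ through a minimal rigid-cusped orbifold $\mathcal{P}$ in the common commensurability class, and use Adams' volume classification together with Hoffman's cover-degree bounds to pin down $\mathcal{P}$. The Neumann--Reid criterion gives a cover from $K$ to some rigid-cusped orbifold, and since $M$ is commensurable with $K$, there is a minimal rigid-cusped orbifold $\mathcal{P}$ in their common commensurability class covered by both $K$ and $M$. As $M$ has one cusp, every cusp of $\mathcal{P}$ must be covered from $M$'s unique cusp, so $\mathcal{P}$ is also one-cusped with rigid cusp.

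Next, since every orientable rigid Euclidean $2$-orbifold is $S^2(2,3,6)$, $S^2(2,4,4)$, or $S^2(3,3,3)$, the torus cusp of $M$ covers the cusp of $\mathcal{P}$ with degree $12$, $8$, or $6$ respectively. In the one-cusped setting, this cusp degree equals the global degree of $M \to \mathcal{P}$, hence
\[
\vol(\mathcal{P}) \leq \vol(M)/6 \leq v_0.
\]
I would then apply Adams' classification of low-volume cusped orientable hyperbolic $3$-orbifolds, which provides an explicit short list of rigid-cusped orbifolds of volume at most $v_0$. The smallest, of volume $v_0/12$ and cusp $S^2(2,3,6)$, lies in the figure-eight knot complement's commensurability class (the $\mathbb{Q}(\sqrt{-3})$-Bianchi class). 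For each other orbifold on the list, I would apply Hoffman's lower bounds on the covering degree from a one-cusped manifold to a rigid-cusped orbifold, deriving $\deg(M \to \mathcal{P}) \cdot \vol(\mathcal{P}) > 6v_0$ and contradicting the given bound on $\vol(M)$. Therefore $\mathcal{P}$ must be the figure-eight's commensurator quotient, and in particular $K$ is arithmetic.

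By Reid's theorem the figure-eight is the only arithmetic knot complement, so $K$ itself is the figure-eight. To conclude $M = K$, I would use the fact that the figure-eight's orientation-preserving isometry group is finite, with each non-identity element fixing a geodesic (the strong-inversion axis and its symmetry images). Hence no non-trivial quotient of $K$ by a group of isometries is a manifold, and $K \to M$ must be the identity.

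The main obstacle is the explicit case elimination in the middle step: enumerating Adams' list below volume $v_0$ and applying Hoffman's degree bounds to each candidate requires bookkeeping of cusp cross-sections, normalizer indices, and the arithmetic structure of the candidate commensurability classes, most notably those over $\mathbb{Q}(\sqrt{-3})$ and $\mathbb{Q}(i)$. The quality of Hoffman's bounds against these specific small orbifolds is exactly what makes the threshold $6v_0$ workable.
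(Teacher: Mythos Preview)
Your overall strategy---bound the volume of a rigid-cusped quotient from below and the covering degree from above, then multiply---is exactly the paper's idea, but your execution has a genuine error and an unnecessary detour.

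The error: your claimed cusp-cover degrees $12$, $8$, $6$ for $T^2\to S^2(2,3,6)$, $S^2(2,4,4)$, $S^2(3,3,3)$ are off by a factor of two. The orientation-preserving wallpaper groups $p6$, $p4$, $p3$ have point groups $C_6$, $C_4$, $C_3$, so the minimal torus-cover degrees are $6$, $4$, $3$. With the correct minimum $3$ you only get $\vol(\mathcal{P})\le 2v_0$, not $v_0$, so your enumeration window doubles. More seriously, the cusp degree need not be the minimal one, so ``equals'' should be ``is a multiple of''.

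The detour: you plan to enumerate Adams' list and then apply Hoffman's bounds case by case, but Hoffman's bounds (\cite[Lemma~5.5]{Hoffman_hidden}, \cite[Theorem~1.2]{Hoffman_rigid_cusps}) depend only on the cusp type of $\orbQ$, not the individual orbifold. The paper exploits this by splitting off the arithmetic case first (Reid gives $K=$ figure-eight immediately), and in the non-arithmetic case pairing Hoffman's degree bounds ($\ge 12$, $\ge 24$, $\ge 24$ for the three cusp types) directly against the uniform non-arithmetic volume lower bounds of \Cref{lem:ANR_vol_bound} ($>v_0/2$, $\ge v_0/2\sqrt{3}$, $>v_0/4$). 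Each product exceeds $6v_0$, and the argument is finished with no enumeration. Your construction of a common quotient $\mathcal{P}$ is also delicate in the arithmetic case (the commensurator is dense, so ``minimal'' is not available), whereas the paper's early arithmetic/non-arithmetic split sidesteps this entirely: once non-arithmetic, the commensurator quotient exists, is rigid-cusped, and is covered by $M$.
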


This is even more general than what is needed for a lower bound on counterexamples to \Cref{NR conjecture}, in that it applies as well to knot complements \textit{covering} manifolds of low volume. Covers of this sort by knot complements with arbitrarily large volume may arise from lens space fillings on the cusp of a two-component link complement corresponding to an unknotted component, as we describe in \Cref{prop:linking_number}. Our next main theorem extends this line of inquiry to \textit{orbifold} fillings.

\newcommand\TheoremEarem{Suppose $L = K\sqcup K'$ is a two-component link in $\Sth$, with $K'$ unknotted, such that $N\doteq\Sth \setminus L$ is hyperbolic with volume at most $6v_0$. Giving the $K'$-cusp of $N$ the standard homological framing, if its $(p,q)$-filling is covered by a hyperbolic knot complement with hidden symmetries then:\begin{enumerate}[label=(\roman*)]
	\item the $(p,q)$-filling slope has normalized length {at most} $7.5832$;
	\item\label{thm:earem_part_ii} $p$ is relatively prime to the linking number $\ell$ between $K$ and $K'$; and
	\item $p$ and $q$ have greatest common divisor in $\{1,2,4\}$.\end{enumerate} }

\theoremstyle{plain}
\newtheorem*{earemtheorem}{Theorem \ref{earem}}
\begin{earemtheorem}\TheoremEarem\end{earemtheorem}

The ``standard homological framing'' used here is described in \Cref{whatsadiorama?}, and the ``normalized length'' is in the sense of Hodgson--Kerckhoff \cite[pp.~1036--37]{HK2008shape}.  The proof of \Cref{earem} plays off a careful analysis of covers to rigid-cusped orbifolds against results of \cite{HK2008shape} that constrain the shape of the filling torus after hyperbolic Dehn filling along long slopes.

For a link $L$ satisfying its hypotheses, \Cref{earem} leaves only a short list of slopes to check in order to completely determine which fillings of the $K'$-cusp of $\Sth\setminus L$ are covered by knot complements with hidden symmetries.   In \Cref{sec:sixtwotwo} we carry out the process of identifying and checking these slopes for the two-bridge link $L = 6^2_2$ (notation as in the Rolfsen tables). We obtain:

\newcommand\SixTwoTwo{The only hyperbolic knot complement with hidden symmetries that covers a filling of $N = \Sth\setminus 6_2^2$ is the figure-eight knot complement, which covers the $(2,0)$-filling of one cusp of $N$.}

\begin{thm}
\label{sixtwotwo}\SixTwoTwo
\end{thm}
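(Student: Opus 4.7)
The plan is to apply \Cref{earem} with $L = 6_2^2$ to reduce the theorem to an explicit, finite list of candidate filling slopes on one cusp of $N$, and then to eliminate each candidate except $(2,0)$ using SnapPy together with the structural results of the paper. First I would verify the hypotheses of \Cref{earem} for $L = 6_2^2$: both components of any two-component two-bridge link are unknotted; $N$ is hyperbolic with $\vol(N) < 6v_0$; and the linking number $\ell$ of the two components of $6_2^2$ can be read from a standard diagram. Designating one component as $K'$ and fixing the standard homological framing on the $K'$-cusp, \Cref{earem} constrains the slopes $(p,q)$ to check: normalized length at most $7.5832$, $p$ coprime to $\ell$, and $\gcd(p,q) \in \{1,2,4\}$. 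Using SnapPy to compute the cusp shape of $N$ at the $K'$-cusp, I would enumerate all such $(p,q)$; the resulting candidate list is finite and of modest size.

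For each candidate $(p,q)$, the question is whether the filling orbifold $\orb$ is covered by a hyperbolic knot complement $K$ with hidden symmetries. A useful reduction exploits $\vol(\orb) < \vol(N) < 6v_0$. When $\gcd(p,q) = 1$, the filling $\orb$ is a manifold, so \Cref{thm:volume_bound_gen} forces $\orb$ to equal the figure-eight knot complement (with trivial cover by $K$); it then suffices to verify via SnapPy that no candidate manifold filling is the figure-eight. For orbifold fillings (i.e., $\gcd(p,q)\in\{2,4\}$), $K$ and $\orb$ share a commensurability class, and both cover a common minimal rigid-cusped orbifold $\orbQ$ with $\vol(\orbQ)\leq\vol(\orb)<6v_0$. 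The universal degree bound from \Cref{cor:agol_bound} gives $\vol(K)\leq 84\cdot\vol(\orb)$, leaving only finitely many commensurability classes to consider. SnapPy's computation of invariants (invariant trace field, quaternion algebra, canonical cell decomposition) then identifies the commensurability class of $\orb$, which can be compared against the known classes of the figure-eight and the dodecahedral knots.

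Finally, I expect the $(2,0)$-slope to yield an orbifold with order-$2$ cone axis that is commensurable with, and indeed covered by, the figure-eight knot complement, while every other candidate slope either produces a non-hyperbolic filling, a filling without hidden symmetries, or a filling in a commensurability class incompatible with any hyperbolic knot complement admitting hidden symmetries. The main obstacle is the orbifold case analysis: \Cref{thm:volume_bound_gen} does not apply directly to orbifold fillings, so one must instead rigorously identify the commensurability class of each such filling and rule out all hypothetical covering knot complements, combining Adams' small-volume classification of cusped orbifolds with the $84$-fold degree bound and explicit invariant computations in SnapPy to exclude every candidate but $(2,0)$.
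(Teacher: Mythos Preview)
Your overall strategy---apply \Cref{earem} to produce a finite list of candidate slopes, then dispatch each one---matches the paper's first proof, and your treatment of the manifold fillings via \Cref{thm:volume_bound_gen} is correct and essentially identical to what the paper does.

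The orbifold case, however, has two genuine gaps. First, \Cref{cor:agol_bound} does not give a degree bound on covers; it bounds the \emph{number of candidate slopes} (by $84$), so the inequality $\vol(K)\le 84\cdot\vol(\orb)$ is not justified and the ``finitely many commensurability classes'' claim does not follow from it. Second, and more seriously, your plan to ``compare against the known classes of the figure-eight and the dodecahedral knots'' presupposes \Cref{NR conjecture}: you need to rule out \emph{every} hypothetical knot complement with hidden symmetries that might cover $\orb$, not just the three known ones. Computing the commensurability invariants of $\orb$ does not by itself exclude an unknown knot complement in that class.

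The paper closes this gap differently. For a non-arithmetic filling $\orb$ covered by a knot complement with hidden symmetries, the minimal orbifold in its commensurability class must have a rigid cusp (via \cite[Prop.~9.1]{NeumReid}), which forces the cusp field of $\orb$ to be $\Q(i)$ or $\Q(\sqrt{-3})$. So it suffices to verify, for each orbifold candidate other than $(2,0)$, that the cusp parameter is not quadratic imaginary. Since SnapPy's exact arithmetic does not currently handle orbifolds directly, the paper passes to the cyclic manifold cover of each orbifold filling (the $2$-fold cover filled along $(p/2,q)$, or the $4$-fold cover filled along $(p/4,q)$) and performs the verified computation there. This avoids any appeal to the conjecture.
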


This is  the ``simplest new example'' in the sense that there is only one two-component link with hyperbolic complement and fewer crossings, the Whitehead link $5^2_1$. $\Sth\setminus5^2_1$ has been known since Neumann-Reid's work \cite[Section 6]{NeumReid} to have no fillings covered by a knot complement with hidden symmetries except one: the figure-eight knot complement.
 
In \Cref{defvar} we give an alternative proof of \Cref{sixtwotwo} that echoes the analogous proof for $\Sth\setminus5^2_1$ in \cite{NeumReid}, using properties of the deformation variety of $\Sth\setminus6^2_2$. While this proof is very specific to $\Sth \setminus 6_{2}^{2}$, it has the advantage of providing additional algebraic information about the orbifolds produced by filling one of its cusps: for instance \Cref{arith} classifies the arithmetic ones, and \Cref{trfld} and \Cref{gareth} relate their cusp, trace and invariant trace fields. Computational methods can produce the former result (see \Cref{sub:covers_of_man_fillings}), but the latter two are not obviously accessible to them.

\subsection{Acknowledgements}
The authors wish to thank the American Institute of Mathematics, Oklahoma State University and Rice University for hosting different subsets of them over the course of this project. 
The third author was partially supported by grant from the
Simons Foundation (\#524123 to Neil R. Hoffman)

\section{Geometric Convergence and Orbifold Covers} 
\label{sec:BandP}

\subsection{Orbifolds and Dehn filling}\label{whatsadiorama?} 

For a cusp $c$ of an orientable finite volume hyperbolic 3-manifold $N$, let $T$ be a torus cross-section of $c$.  A \textit{primitive slope} on $T$ is the unoriented isotopy class of a non-trivial simple closed curve in $T$. The set of primitive slopes on $T$ corresponds bijectively to the set of pairs $\{ \pm \alpha\}$ of primitive elements in $H_1(T;\Z)$. More generally, a \emph{slope} on $T$ corresponds to a pair $\pm\alpha=\pm k\beta$ of elements of $H_1(T;\Z)$, where $\beta$ is primitive, $k\in\Z_{>0}$. When $k=1$, so that $\alpha=\beta$ is primitive, the \emph{Dehn filling of $c$ along $\alpha$} is the manifold $N(\alpha)$ obtained by cutting $N$ along $T$, discarding the cusp $c$, and replacing it with a solid torus $J$ so that the meridian of $J$ is glued to a curve in $T$ isotopic to $\beta$ (see \Cref{fig:dehn_fill}).  If $k>1$ then we proceed similarly, except that in this case $J$ is instead an order-$k$ orbi-torus: a solid torus in which the core curve is marked by $k$-torsion, so that the meridian disk is a cone-disk with an order-$k$ cone point. In this case $N(\alpha)$ is an orbifold, with singular locus the core curve of $J$, and the image of $\beta$ in $N(\alpha)$ bounds an order-$k$ cone-disk.

We can identify the collection of slopes on $T$ with $\mathbb{Z}\oplus\mathbb{Z} - \{(0,0)\}/(p,q)\sim-(p,q)$ by fixing a \textit{framing}, ie.~ a basis for $H_1(T;\Z)$. For any framing, a slope $\alpha$ identified with the class of some $(p,q)$ is primitive if and only if $\gcd(p,q) = 1$. More generally, after Dehn filling along $\alpha$ the core of the filling orbi-torus is marked by $k$-torsion, where $k=\gcd(p,q)$. When a framing is understood, we will also refer to the Dehn filling along $\alpha$ as the \textit{$(p,q)$-filling of $N$ at $c$.}

When $N$ is marked (implicitly or explicitly) by a homeomorphism $h\co N\to\Sth\setminus L$ for a link $L$, we will call $c$ the \textit{$K$-cusp}, where $K$ is the component of $L$ such that $h(c)$ is contained in a regular neighborhood $\nbd(K)$ of $K$. In this case the \textit{standard homological framing} on $T$ is the basis $\{[\mu],[\lambda]\}$ for $H_1(T;\Z)$, where $\mu$ is a \textit{meridian} and $\lambda$ is a \textit{longitude}. These primitive slopes are characterized as follows: $h(\mu)$ bounds a disk in $\nbd(K)$, and $h_*([\lambda])$ lies in the kernel of the inclusion-induced map $H_1(h(T))\to H_1(\Sth \setminus  K)$.

More generally, we will need a notion of Dehn filling when $N$ is a cusped orientable hyperbolic $3$-orbifold. For such an $N$, the cusp cross-section is one of the five Euclidean 2-orbifolds: $T$, $S^2(2,2,2,2)$, $S^2(3,3,3)$, $S^2(2,3,6)$, and $S^2(2,4,4)$. The last four have underlying topological space $\mathbb{S}^2$ and cone points prescribed by the ordered tuples.

\begin{dfn}\label{(non)rigid} A cusp of a complete, orientable hyperbolic $3$-orbifold is \textit{rigid} if its cross-section is one of $S^2(3,3,3)$, $S^2(2,3,6)$, or $S^2(2,4,4)$, and \textit{non-rigid} otherwise.\end{dfn}

Rigid cusps are so-named because their Euclidean metrics are unique up to similarity. No solid torus-quotient is bounded by a rigid Euclidean orbifold, so we can't fill a rigid cusp.

For an orbifold's cusp with a torus cross-section $T$, Dehn filling is defined exactly as in the manifold case. When the cusp cross-section is $S=S^2(2,2,2,2)$, we can again identify a filling slope with an element $\alpha=(p,q) \in \Z\times \Z$. Namely, if $\alpha = k\beta$ is a slope on $T$, then under the involution $h:T\to S$ a simple closed curve representing $\beta$ (and disjoint from the fixed points of $h$) maps to a simple closed curve $\beta'$ on $S$. In this context, the Dehn filling of $c$ along $\alpha$ is the orbifold $N(\alpha)$ obtained by cutting along $S$, removing the cusp $c$, and gluing in an orbi-pillow $P$ (shown in \Cref{fig:orbi-fill}) in such a way that the curve on $\del P$ that bounds an order-$k$ cone-disk is identified with $\beta'\subset S$. Alternatively, one can think of this as gluing to $S$ an orbi-tangle of slope $\beta$, in which a closed curve isotopic to $\beta'$ bounds an order-$k$ cone-disk.

The \emph{norm} of a filling $\alpha=(p,q)$ is $|\alpha|=\sqrt{p^2+q^2}$.  We adopt the usual convention that $N(\infty)=N$ and $|\infty|=\infty$.

\begin{figure}[h]
	\begin{subfigure}{.49\textwidth}
 		\centering
 		\includegraphics[scale=1.4]{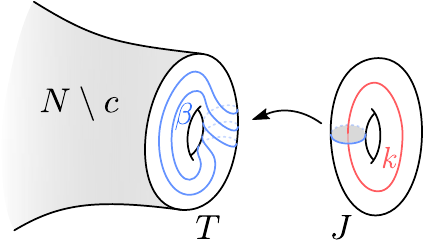}
 		\caption{}
   		\label{fig:dehn_fill}
	\end{subfigure}
	\begin{subfigure}{.49\textwidth}
 		\centering
 		\includegraphics[scale=1.4]{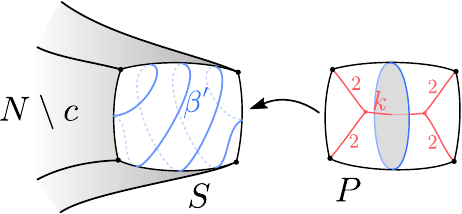}
 		\caption{}
   		\label{fig:orbi-fill}
	\end{subfigure}	
	\caption{Dehn filling torus and pillowcase cusps along non-primitive curves.}
	\label{fig:fillings_back}
\end{figure}
%*** add more ***  

Finally, if $N$ has cusps $\{ c_1, \ldots, c_k\}$, we use a \emph{multi-slope vector} $\bar{\alpha}$ where the $i^\text{th}$ cusp of $N$ is filled according to the $i^\text{th}$ coordinate of $\bar{\alpha}$.  For a sequence of  multi-slope vectors $\{ \bar{\alpha}_n\}$, we write $| \bar{\alpha}_n | \to \infty$ if, for each $i$, the norms of the $i^\text{th}$ coordinates go to infinity.  Here, we say that a sequence of hyperbolic orbifolds $\{ M_n \}$ \emph{converge} to an orbifold $N$ if there is a sequence of multi-slope vectors $\{\bar{\alpha}_n\}$ for which $M_n = N(\bar{\alpha}_n)$ and $| \bar{\alpha}_n | \to \infty$.  In this case, the geometric limit of the $M_n$'s is $N$.

We can now prove an equivalent version of \Cref{main_conjecture}, which was discussed in the introduction. 

\begin{prop}\label{prop1} 
\Cref{main_conjecture} is false if and only if there is a sequence of distinct hyperbolic knot complements, each of which is a cover of a rigid cusped orbifold, so that the sequence converges to the complement of a link in $S^3$.
\end{prop}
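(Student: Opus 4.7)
The plan is to combine the Neumann--Reid hidden-symmetry criterion, stated in the introduction as "a hyperbolic knot complement admits hidden symmetries if and only if it covers a rigid-cusped orbifold", with the J\o rgensen--Thurston compactness/limit description of bounded-volume sequences of hyperbolic $3$-manifolds. Once these two tools are in hand, both directions of the equivalence reduce to checking that geometric convergence plus the knot-complement hypothesis forces the limit to live inside $\Sth$ as a link complement.

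For the forward direction, I assume \Cref{main_conjecture} fails and fix $R>0$ together with infinitely many pairwise distinct hyperbolic knot complements $\{M_n=\Sth\setminus K_n\}$ with hidden symmetries and $\vol(M_n)<R$. By the Neumann--Reid criterion, each $M_n$ covers a rigid-cusped orbifold. By J\o rgensen--Thurston, after passing to a subsequence the $M_n$ converge geometrically to a complete finite-volume hyperbolic $3$-manifold $N$, and because the $M_n$ are pairwise distinct the sequence is not eventually constant, so $N$ must have strictly more cusps than $M_n$; in particular $N$ has at least two cusps, and for each $n$ there is a multi-slope vector $\bar\alpha_n$ with $|\bar\alpha_n|\to\infty$ such that $M_n=N(\bar\alpha_n)$. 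Under this filling description, $N$ sits inside $M_n$ as the complement of the cores of the filling solid tori, which are an unknotted-core link $L_n\subset M_n$. Since $M_n=\Sth\setminus K_n$, this gives $N=\Sth\setminus(K_n\cup L_n)$, a link complement in $\Sth$, and the $M_n$ are a sequence of the required form.

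For the converse, suppose $\{M_n\}$ is a sequence of distinct hyperbolic knot complements, each a cover of a rigid-cusped orbifold, converging to $N=\Sth\setminus L$ for some link $L$. The Neumann--Reid criterion gives each $M_n$ hidden symmetries. Geometric convergence forces $\vol(M_n)\to\vol(N)<\infty$, so the sequence of volumes is bounded; choosing any $R>\vol(N)$ exhibits infinitely many distinct hyperbolic knot complements with hidden symmetries and volume less than $R$, so \Cref{main_conjecture} fails.

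The argument is almost entirely bookkeeping on top of the two cited black boxes, and the only step that requires any care is the identification in the forward direction of the J\o rgensen--Thurston limit $N$ as a link complement in $\Sth$. I expect this to be the ``main obstacle,'' but only superficially: it follows immediately from the fact that the cores of the filling solid tori sit inside $M_n=\Sth\setminus K_n$, hence inside $\Sth$, so $N$ is $\Sth$ with the link $K_n$ together with these cores removed. Note that this proof does not use \Cref{main_theorem_intro}; that stronger statement is what \Cref{corollary1} uses to upgrade the link complement $N$ produced here to a link complement \emph{with hidden symmetries}.
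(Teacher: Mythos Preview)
Your proof is correct and follows essentially the same approach as the paper's. The only notable difference is packaging: the paper cites Thurston's result that all bounded-volume hyperbolic $3$-manifolds are Dehn fillings of a single link complement $N_0$, then explicitly subsequences so that each filling coordinate is either constant or tends to infinity, defining $N$ as $N_0$ with the constant fillings performed. You instead invoke J\o rgensen--Thurston compactness as a black box to produce the limit $N$ directly. Both routes land on the same key observation---that $N$ embeds in each $M_n=\Sth\setminus K_n$ with complement a union of circles, hence is a link complement in $\Sth$---and the converse direction is identical in substance (the paper phrases the volume bound as ``Dehn filling reduces volume,'' you as ``volumes converge''). One cosmetic point: the phrase ``unknotted-core link'' is misleading, since the cores of the filling tori need not be unknotted in $\Sth$; they are simply circles whose complement in $M_n$ is $N$.
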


\begin{proof} If the conjecture holds then certainly no single hyperbolic link complement produces infinitely many hyperbolic knot complements with hidden symmetries by Dehn filling, since Dehn filling reduces volume \cite[Th.~6.5.6]{Th_notes}. 

Now suppose that the conjecture fails.  {Then there is some $R>0$ and a sequence $\{ M_n\}_1^\infty$ of distinct hyperbolic knot complements, each of which has hidden symmetries and volume less than $R$. }  By Proposition 9.1 of \cite{NeumReid}, each $M_n$ covers an orbifold with a rigid cusp. By \cite[Theorem 5.11.2]{Th_notes}, there is a link  {complement $N_0$ }
such that each $M_n$ is obtained by Dehn filling {on $N_0$}. 
{Since $N_0$ has only finitely many cusps, we may pass to a subsequence so that there is a fixed cusp of $N_0$ which remains unfilled in every $M_n$.  Beginning with this unfilled cusp, number the cusps of $N_0$ from $1$ to $m$ and fix a basis for the first homology of each cusp to obtain a sequence of multi-slope vectors $\{ \bar{\alpha}_n \}$ with $M_n = N(\bar{\alpha}_n)$.}  It is not necessarily true that $|\bar{\alpha}_n | \to\infty$, but by repeatedly subsequencing we may assume that, for each $i$, { the sequence of $i^\text{th}$ coordinates either goes to infinity or is constant.  Let $N$ be the manifold obtained by doing all constant fillings on $N_0$.  We have now arranged that $\{ M_n \}$ converge to $N$.} Since $N$ embeds in every $M_n$, with complement a disjoint union of circles, $N$ is itself a link complement.\end{proof}

\subsection{$(\epsilon,d_N)$-twisted fillings.}\label{sub:edn}

Suppose $\orbQ = \mathbb{H}^3/\Gamma$ is a complete hyperbolic 3-orbifold.  Following \cite[\S 2]{DM}, for $\epsilon >0$, we define the \textit{$\epsilon$-thin} part $\orbQ_{<\epsilon}$ of $\orbQ$ to be the set of $x\in \orbQ$ such that for some $\tilde{x}\in\mathbb{H}^3$ projecting to $x$ and some $\gamma\in\Gamma$ of infinite order, $d(\tilde{x},\gamma\tilde{x})<\epsilon$.  The $\epsilon$-thick part of $\orbQ$ is $\orbQ_{\ge \epsilon}=\orbQ\setminus \orbQ_{<\epsilon}$.  If $\epsilon$ is chosen to be smaller than the three-dimensional Margulis constant, Corollary 2.2 of \cite{DM} states that every non-compact component of $\orbQ_{<\epsilon}$ is isometric to the quotient of a horoball $B$ by the action of its stabilizer in $\Gamma$. Such a component is a cusp, and it is \textit{rigid} if its stabilizer subgroup is a Euclidean triangle group or its index-two orientation-preserving subgroup.

There are several ways to understand what it means for a filled manifold to ``geometrically approximate'' the corresponding unfilled manifold.  Here, we want a property which assures that if a filling $M$ of $N$ covers an orbifold $\orbQ$, then there is a natural restriction of that cover between the $\epsilon$-thick part of $M$ and the $\epsilon$-thick part of $\orbQ$. 

Let $N$ be a finite-volume, orientable, hyperbolic 3-manifold with cusps ${c_1, . . . , c_k}$ and take $\epsilon \in (0, 3.45)$.  Let $d_N$ be the maximum possible degree of a cover to a non-arithmetic orbifold, and observe that $d_N \leq 4\frac{vol(N)}{v_0}$; see \cite[Proposition 6.2]{HMW19}.  Recall the following definition from \cite{HMW19}.

\begin{dfn}\label{def:edHTN}
{A cusped hyperbolic Dehn-filling $M$ of a hyperbolic 3-manifold $N$ is an \emph{$(\epsilon,d_N)$-twisted filling} of $N$}, if the following spaces are homeomorphic
\[ N \setminus n\left(\bigcup c_i\right) \ \cong \ N_{\thick} \ \cong \ M_{\thick} \ \cong \ M_{\thickDN}. \]
{In this case, we often refer to $M$ as \emph{$(\epsilon,d_N)$-twisted} (relative to $N$).}

\end{dfn}

The assumption that $\epsilon < 3.45$ implies that $(\epsilon,d_N)$-twisted fillings are non-arithmetic (see \cite[Proposition 6.2]{HMW19}). This fact is important for \Cref{prop:edn}, which is a generalization of \cite[Proposition 2.6]{HMW19}. The proof of our version here is largely identical so we only point out the minor differences below. We appeal to the notation of \cite{HMW19}, letting $\gamma_{i}\subset M$ be the core geodesic of the solid torus introduced by filling the $i^{th}$ cusp $c_i$ of $N$, for $i=1, \ldots, k$.    
In this version of the proposition we allow the $i^{th}$ cusp to remain unfilled, in which case 
 $\gamma_{i}$ is an empty set of points.

\begin{prop}\label{prop:edn}

Let $N$ be a $k$-cusped hyperbolic manifold and let $\bar{\alpha}=(\alpha_1,\dots,\alpha_k)$ be a multislope. %, possibly with some $\alpha_i=\infty$.
\begin{enumerate}[label=(\roman*)]
\item For any sufficiently small $\epsilon>0$, there is a real number $C$ such that if all $\alpha_i$ satisfy $|\alpha_i|>C$, then $N(\bar{\alpha})$ is an $(\epsilon,d_N)$-twisted filling of $N$.
\item \label{prop:edn_part_1} If $N(\bar{\alpha})$  is an $(\epsilon,d_N)$-twisted filling of $N$
and   $\phi \colon   N(\bar{\alpha}) \rightarrow \orb$ is an orbifold cover, then there is an orbifold cover $\widehat{\phi} \co  N \rightarrow \orbQ$, where $\orbQ \cong \orb \setminus \sqcup_{i=1}^{k} \phi(\gamma_{i})$. 
\end{enumerate}
\end{prop}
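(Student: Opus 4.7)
Since \Cref{prop:edn} extends \cite[Proposition 2.6]{HMW19}, my plan is to follow that proof's architecture and handle the additional ingredient of unfilled cusps. Specifically, the multi-slope $\bar\alpha = (\alpha_1, \dots, \alpha_k)$ may have coordinates $\alpha_i=\infty$, so that the corresponding cusp of $N$ is left unfilled in $M = N(\bar\alpha)$ and the associated core geodesic $\gamma_i$ is empty. The cusps of $M$ therefore break into two types---old cusps passed along from $N$ and new cusps produced by Dehn filling---which I will track separately through the cover $\phi$.

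For (i), I will choose $\epsilon > 0$ smaller than the three-dimensional Margulis constant and than the infimum of lengths of closed geodesics in $N$, so that $N_{<\epsilon}$ is exactly a disjoint union of standard cusp neighborhoods. Applying Thurston's hyperbolic Dehn surgery theorem, I can find a threshold $C$ such that whenever every finite $|\alpha_i| > C$, $M$ is hyperbolic with each (nonempty) core geodesic $\gamma_i$ of length less than $\epsilon/d_N$. This forces $M_{<\epsilon/d_N}$ to be the disjoint union of Margulis tubes about the filled $\gamma_i$'s together with horoball neighborhoods of the unfilled cusps, yielding the chain of homeomorphisms that defines the $(\epsilon, d_N)$-twisted condition.

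For (ii), the hypothesis $\epsilon < 3.45$ together with \cite[Proposition 6.2]{HMW19} ensures that $M$ is non-arithmetic, so the cover $\phi\co M\to\orb$ has degree $d \leq d_N$. I would next use length-multiplication for lifted loops to establish the inclusion chain
\[M_{\geq\epsilon} \;\subseteq\; \phi^{-1}(\orb_{\geq\epsilon/d_N}) \;\subseteq\; M_{\geq\epsilon/d_N},\]
in which the outer inclusion holds because a loop at a point downstairs of length $<\epsilon/d_N$ lifts, after taking a power at most $d$, to a loop upstairs of length less than $d\epsilon/d_N\leq\epsilon$, while the inner inclusion holds because loops upstairs map isometrically to loops of the same length downstairs. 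The $(\epsilon,d_N)$-twisted hypothesis collapses these inclusions to equalities, so $\phi$ restricts to a covering map $\phi_0\co M_{\geq \epsilon/d_N} \to \orb_{\geq\epsilon/d_N}$.

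Finally, identifying $M_{\geq \epsilon/d_N}$ with $N\setminus n(\bigcup c_i)$ via the twisted condition, set $\orbQ \doteq \orb \setminus \sqcup_{i=1}^k \phi(\gamma_i)$: removing each closed geodesic or cone-circle $\phi(\gamma_i)$ produces a new cusp of $\orbQ$, and removing the empty $\gamma_i$ for an unfilled cusp does nothing, so the cusp of $\orb$ simply remains a cusp of $\orbQ$. Using uniqueness of the complete hyperbolic structure on a cusped orbifold together with the fact that $\phi_0$ covers thick parts containing full fundamental domains on both sides, $\phi_0$ extends uniquely to an orbifold cover $\widehat{\phi}\co N\to\orbQ$. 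The main obstacle I anticipate is the bookkeeping at unfilled cusps---they appear on both sides of the cover and must be matched coherently---but once each empty $\gamma_i$ is recognized as contributing nothing to the set of removed cores, this reduces to the situation already handled in \cite{HMW19}.
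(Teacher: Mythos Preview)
Your proposal is correct and follows the same approach as the paper, which simply observes that the proof of \cite[Proposition~2.6]{HMW19} carries over verbatim once one notes that $\epsilon<3.45$ forces non-arithmeticity and takes $C$ to be the maximum of the constants $k_i$ produced there. You have in effect reconstructed that argument---the thick-part inclusion chain $M_{\geq\epsilon}\subseteq\phi^{-1}(\orb_{\geq\epsilon/d_N})\subseteq M_{\geq\epsilon/d_N}$ and the treatment of unfilled cusps as contributing empty $\gamma_i$---rather than citing it.
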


\begin{proof}
	The main difference between the proposition above and that of \cite{HMW19} is that here $N$ can be any cusped hyperbolic 3-manifold. The assumption that $\epsilon<3.45$ from  \Cref{def:edHTN} assures that $(\epsilon,d_N)$ fillings are non-arithmetic, so the proof in \cite{HMW19} goes through without change by taking $C$ to be the maximum of the $k_i$'s in that proof. The above proposition is more general than that of \cite{HMW19}, because we require that any sufficiently small $\epsilon$ will work for (i).  However this change requires no alteration to the proof as given in \cite{HMW19}. Although stated there, we emphasize that the cusps of $\orbQ \cong  \orb \setminus \sqcup_{i=1}^{k} \phi(\gamma_{i})$ obtained from drilling are all non-rigid.
\end{proof}

\begin{remark}
Although we have defined the norm of $\alpha_i=(p_i,q_i)$ to be $\sqrt{p_i^2+q_i^2}$, the statement of \Cref{prop:edn} is also valid if we measure the length of $\alpha_i$ in a maximal horoball neighborhood, or if we use its normalized length. We appeal to the third notion in \Cref{sec:effectivization}.  
\end{remark}

{With \Cref{prop:edn} in hand, we can prove  \Cref{main_theorem}, which is our main tool for analyzing hidden symmetries of hyperbolic knot complements in the remainder of this section. \Cref{main_theorem}, while more technical, implies \Cref{main_theorem_intro} which was stated in the introduction. This gives a more geometric description of the restriction of the $\phi_n$ covering maps. In particular, \Cref{main_theorem} develops a correspondence between how  geodesics coming from high parameter fillings behave under the covering map $\phi_n$ on one hand, and how cusps behave under a restricted version of such covering maps  on the other. A feature of this result worth noting is that geometric data associated to the limiting manifold can force restrictions on the cover $\phi_n$ for sufficiently high parameter elements of the limiting sequences. Furthermore, \cite{FPS19} provides tools for making such arguments effective (see for example \cite[\S 6.1]{HMW19}). 
  As a result, this theorem gives us a powerful tool to start investigating \Cref{main_conjecture} in the context of \Cref{prop1}.}

\begin{thm} \label{main_theorem}
Suppose that $N$ is a $k$-cusped hyperbolic 3-manifold and $\{ \bar{\alpha}_n\}$ is a sequence of multi-slope vectors for $N$ such that $|\bar{\alpha}_n| \to \infty$ and every $N(\bar{\alpha}_n)$ is a hyperbolic orbifold with a single cusp.  Suppose further that, for each $n$, $\phi_n \co N(\bar{\alpha}_n) \to \orbQ_n$ is a covering map to an orbifold with a rigid cusp.  Then, after passing to a subsequence, we can guarantee the following:
\begin{itemize}
\item[(i)]	There is an orbifold cover $\phi \co N \to \orbQ$, where $\orbQ$ has a unique rigid cusp $c$ and $\phi^{-1}(c)$ is connected.  Moreover, each $\orbQ_n$ is obtained from $\orbQ$ by orbifold Dehn filling on all cusps of $\orbQ$ but $c$. %\neil{If $Q$ and $N$ both have $n$ cusps}, the cusp $p^{-1}(c)$ of $N$ is weakly geometrically isolated.
\item[(ii)] If $\epsilon>0$ is small enough, then the following diagram commutes for every $n$
$$
\xymatrix{
N \ar[d]_{\phi} \ar[r] & (N(\bar{\alpha}_n))_{>\epsilon} \ar[d]^{\bar{\phi}_{n}}\\
\orbQ \ar[r] & (\orbQ_{n})_{>\epsilon}\\
}
$$
where the horizontal maps are homeomorphisms and $\bar{\phi}_n$ is obtained by restricting $\phi_n$ and then post-composing with a homeomorphism $(\orbQ_{n})_{>\epsilon/deg(\phi)} \to (\orbQ_{n})_{>\epsilon}$.
\end{itemize}

\end{thm}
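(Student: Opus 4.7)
The plan is to combine Proposition \ref{prop:edn} with compactness and rigidity arguments for hyperbolic 3-orbifolds. Since $N$ has only finitely many cusps, I first pass to a subsequence to fix the cusp $c_1$ of $N$ that remains unfilled in every $\bar{\alpha}_n$. Then for any sufficiently small $\epsilon > 0$, Proposition \ref{prop:edn}(i) (after further subsequencing) makes each $N(\bar{\alpha}_n)$ an $(\epsilon,d_N)$-twisted filling of $N$, and Proposition \ref{prop:edn}(ii) then produces a cover $\widehat{\phi}_n\co N \to \orbQ'_n$, where $\orbQ'_n = \orbQ_n \setminus \bigsqcup_{i=2}^{k}\phi_n(\gamma_i^{(n)})$. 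Each $\orbQ'_n$ has a unique rigid cusp inherited from $\orbQ_n$, whose $\widehat{\phi}_n$-preimage is the single unfilled cusp $c_1$, together with torus cusps arising from the drilling.

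Next I would stabilize this data. The degrees $\deg\widehat{\phi}_n = \deg\phi_n$ are uniformly bounded above (by $d_N$, or by $\vol(N)$ divided by the minimum volume of a complete hyperbolic 3-orbifold), so after subsequencing I may assume $d := \deg\widehat{\phi}_n$ is constant, making $\vol(\orbQ'_n) = \vol(N)/d$ constant as well. By the orbifold version of the J\o rgensen--Thurston theorem, a further subsequence of $\{\orbQ'_n\}$ converges geometrically to a finite-volume hyperbolic 3-orbifold $\orbQ_\infty$. If $\vol(\orbQ_\infty) < \vol(N)/d$, then the $\orbQ'_n$ would be nontrivial Dehn fillings of $\orbQ_\infty$ whose volumes tend to $\vol(\orbQ_\infty)$ from above, contradicting their constant larger value. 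Hence $\vol(\orbQ_\infty) = \vol(N)/d$, and Mostow rigidity yields $\orbQ'_n \cong \orbQ_\infty =: \orbQ$ for all large $n$. Since $\pi_1(\orbQ)$ is finitely generated, it has only finitely many index-$d$ subgroups, giving only finitely many equivalence classes of degree-$d$ covers $N \to \orbQ$; after one more subsequence all $\widehat{\phi}_n$ become equivalent to a single $\phi\co N \to \orbQ$. Via the realizing isometries $\orbQ'_n \to \orbQ$, each $\orbQ_n$ is then presented as a Dehn filling of $\orbQ$ on its non-rigid cusps, proving (i) with $\phi^{-1}(c) = c_1$ connected.

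For (ii), the horizontal homeomorphisms are those furnished by the $(\epsilon,d_N)$-twisted structure on $N(\bar{\alpha}_n)$ together with the induced twisted-filling structure on $\orbQ_n$, under which we implicitly identify $N$ and $\orbQ$ with their respective thick parts. The vertical $\bar{\phi}_n$ is defined by restricting $\phi_n$ to $(N(\bar{\alpha}_n))_{\geq \epsilon}$, whose image lies in $(\orbQ_n)_{\geq \epsilon/d}$, then post-composing with the canonical homeomorphism $(\orbQ_n)_{\geq \epsilon/d} \cong (\orbQ_n)_{\geq \epsilon}$ coming from the twisted structure. Commutativity of the diagram then follows because $\widehat{\phi}_n$ was constructed in Proposition \ref{prop:edn}(ii) by exactly this procedure. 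The principal obstacle in the argument is the stabilization step above: extracting a single $\orbQ$ and cover $\phi$ compatible (up to chosen identifying isometries) with every $\widehat{\phi}_n$ simultaneously. This requires the coordinated use of bounded degrees, constant volumes, the orbifold J\o rgensen--Thurston theorem, Mostow rigidity, and finiteness of finite-index subgroups of $\pi_1(\orbQ)$.
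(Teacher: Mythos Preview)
Your proposal is correct and reaches the same conclusion as the paper, but your stabilization step takes a different route. The paper argues directly that $N$ covers only finitely many orbifolds (citing \cite[Theorems 6.5, 6.6]{DM} and \cite[\S 6.6]{Th_notes}), so after a subsequence all the drilled quotients $\orbQ'_n$ are homeomorphic to a single $\orbQ$; then finiteness of covers $N\to\orbQ$ gives a further subsequence with a common $\phi$. You instead bound the degrees, fix a common degree $d$ so that $\vol(\orbQ'_n)=\vol(N)/d$ is constant, apply the orbifold J\o rgensen--Thurston theorem to extract a geometric limit $\orbQ_\infty$, and use volume rigidity plus Mostow to force $\orbQ'_n\cong\orbQ_\infty$. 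The paper's route is shorter (a single citation does the work), while yours is more self-contained and makes the role of volume explicit.

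One small correction to your volume argument: Dehn filling \emph{decreases} volume, so if the $\orbQ'_n$ are fillings of $\orbQ_\infty$ then $\vol(\orbQ'_n)\le\vol(\orbQ_\infty)$, and the case to rule out is $\vol(\orbQ_\infty)>\vol(N)/d$, not $<$. In fact no case analysis is needed: continuity of volume under geometric convergence gives $\vol(\orbQ_\infty)=\lim\vol(\orbQ'_n)=\vol(N)/d$ immediately, and then equality of volumes forces each filling to be trivial, hence $\orbQ'_n\cong\orbQ_\infty$. With this fix your argument goes through, and the treatment of part (ii) matches the paper's.
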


The proof of \Cref{main_theorem} will follow from \Cref{prop:edn} with a bit of additional work. 

\begin{proof}
By \Cref{prop:edn}(i), we may pass to a subsequence and select an $\epsilon >0$ such that every $N(\bar{\alpha}_n)$ is an $(\epsilon, d_N)$-twisted filling of $N$.  Also, as in the proof of \Cref{prop1}, we may pass to a further subsequence and index the cusps of $N$ so that the first cusp of $N$ remains unfilled in every $N(\bar{\alpha}_n)$.

Let $\gamma_{i,n}$ be the core geodesic in $N(\bar{\alpha}_n)$ of the filling solid torus for the $i^{th}$ cusp of $N$. By \Cref{prop:edn}(ii), there are orbifold covers $\widehat{\phi}_n:N\to  \orbQ_{n}\setminus \sqcup_{i=2}^k \phi(\gamma_{i,n})$. By construction, these covers are the restrictions of $\phi_{n}$ to $N(\bar{\alpha}_n)\setminus \sqcup_{i=2}^k \gamma_{i,n}$, post-composed with a homeomorphism (see proof of \cite[Prop. 2.6]{HMW19}).  {The cover $\bar{\phi}_n$ described in $(ii)$ is then obtained from $\widehat{\phi}_n$ by composing with the homeomorphisms $N\xrightarrow{\cong} (N(\bar{\alpha}_n))_{>\epsilon}$ and $(\orbQ_{n}\setminus \sqcup_{i=2}^k \phi(\gamma_{i,n})) \xrightarrow{\cong} (\orbQ_n)_{>\epsilon}$. To establish existence of the cover $\phi$, we observe that there are only finitely many minimal orbifolds covered by $N$ (see for example \cite[Theorems 6.5 and 6.6]{DM} or \cite[$\S$ 6.6]{Th_notes}), so by again passing to a subsequence, we may assume that all the orbifolds $\orbQ_{n}\setminus \sqcup_{i=2}^k \phi(\gamma_{i,n})$ are homeomorphic to a fixed orbifold $\orbQ$.  Because there are only finitely many covers $N \to \orbQ$, we may subsequence again so that every cover $\widehat{\phi}_n$ induces the same cover $\phi:N\to \orbQ$.  That the diagram in $(ii)$ commutes follows from the construction of $\phi$ and $\bar{\phi}_n$, as does the description of $\bar{\phi}_n$ as a restriction of $\phi_n$ composed with a homeomorphism.}

The orbifolds $\orbQ_{n}$ converge geometrically to $\orbQ$, so $\orbQ$ has a single rigid cusp. Because the cover $\phi$ is (up to homeomorphism) constructed by restricting the covers $\phi_{n}$, the rigid cusp of $\orbQ$ can only be covered by one cusp of $N$, so \Cref{main_theorem}(i) is established. 
\end{proof}

We are now ready to prove \Cref{corollary1}. 
Before we proceed, it is worth remarking that a link complement with hidden symmetries need not cover an orbifold with a rigid cusp. In fact, any arithmetic link complement with invariant trace field other than $\Q(\sqrt{-3})$ and $\Q(i)$ furnishes such an example (see the `arithmetic zoo' of \cite[Chapter 13]{MR03}).  However, as we see in the proof below, this is the case under the hypotheses of \Cref{corollary1}.

\begin{proof}[Proof of \Cref{corollary1}] Suppose that $N$ is a hyperbolic 3-manifold and $\{ \bar{\alpha}_n \}$ is a sequence of multi-slope vectors such that $|\bar{\alpha}_n| \to \infty$ and, for every $n$,  $N(\bar{\alpha}_n)$ is a knot complement with hidden symmetries.  As in the proof of \Cref{prop1}, $N$ is a link complement. Also, for each $n$, we have a cover $N(\bar{\alpha}_n) \to \orbQ_n$ to an orbifold with a rigid cusp so we may choose an $\epsilon >0$ and assume that conditions (i) and (ii) of \Cref{main_theorem} hold. 

In particular we have a cover $\phi \co N \to \orbQ$ to an orbifold with a rigid cusp.  We also have a commutative diagram of induced homomorphisms on orbifold fundamental groups
$$
\xymatrix{
\pi_1(N) \ar[d] \ar[r] & \pi_1(N(\bar{\alpha}_n))\ar[d]\\
\pi_1^{orb}(\orbQ) \ar[r] & \pi_1^{orb}(\orbQ_{n})\\
}
$$
where the vertical maps are injective and the horizontal maps are surjective.

Now fix any $n$. If $\pi_1(N)$ is normal in $\pi_1^{orb}(\orbQ)$, then its image is normal in $\pi_1^{orb}(\orbQ_{n})$.  But, by \cite[Prop.~9.1]{NeumReid} and since $N(\bar{\alpha}_n) \to \orbQ_n$ is a cover from a knot complement to an orbifold with a rigid cusp, this is not the case.  Therefore, the cover $\phi \co N\to \orbQ$ is not normal and the elements of $\pi_1^{orb}(\orbQ) $ that do not normalize $\pi_1(N) $ induce hidden symmetries of $N$.  \end{proof}

The last result we prove in this setting deals with fillings of two-bridge link complements.  In particular, \Cref{corollary2} asserts that for any hyperbolic two component 2-bridge link complement, there are at most finitely many ways to fill one of the components so that the resulting orbifold is covered by a knot complement admitting hidden symmetries. 
{As noted in the introduction, many orbifold fillings of these link complements produce orbifolds covered by knot complements. We refer the reader to \Cref{subsec:pre} for more details on this.} As part of the argument we build a cover $\phi \colon \Sth \setminus L\to\orbQ$ which preserves the number of cusps (i.e., no cusps of $\Sth \setminus L$ are identified). 

\begin{proof}[Proof of \Cref{corollary2}] By way of contradiction, suppose that for some hyperbolic two-bridge link $L$ there is an infinite sequence of orbifolds $M_n$ obtained by Dehn filling one cusp of $N=\Sth \setminus L$, and that each $M_n$ is covered by a knot complements with hidden symmetries.

After possibly excluding at most a finite number of orbifolds in the sequence, we may assume that each $M_n$ is an $(\epsilon,d_N)$-twisted filling of $N$. Hence, $M_n$ is non-arithmetic by \cite[Proposition 6.2]{HMW19}.
 Since the knots covering the $M_n$ have hidden symmetries they cover orbifolds with rigid cusps \cite[Prop.~9.1]{NeumReid}, and it follows that the $M_n$ do as well. Therefore $N=\Sth \setminus L$ and the $M_n$ satisfy the hypotheses of \Cref{main_theorem}.

Suppose first that $N$ is a non-arithmetic two bridge link complement. The minimal orbifold $\orbQ_0$ in the commensurability class of $N$ has a single cusp, since there is an involution of $\Sth$ exchanging the components of $L$. This cusp is rigid by \Cref{main_theorem}, since $N$ covers an orbifold $\orbQ$ with a rigid cusp, and $\orbQ$ in turn covers $\orbQ_0$. But results of Millichap--Worden \cite{MilliWord} show that $N$ cannot cover an orbifold with a rigid cusp, a contradiction. In particular, \cite[Th.~3.6]{MilliWord} shows that the ``lifted cusp triangulation'' $\widetilde{T}$ of $N$ has no combinatorial rotational symmetry of order greater than two. But for any orbifold $\orbQ$ covered by $N$ the deck transformations of $\orbQ$ must preserve the canonical triangulation of $M$; and if $\orbQ$ had a rigid cusp then one such deck transformation would be an order-$3$ or -$4$ rotational symmetry of $\widetilde{T}$.

If $N=\Sth \setminus L$ is arithmetic then by \cite{GMacMart}, $L$ is one of the Whitehead link $5^2_1$, $6^2_2$, or $6^2_3$. The complement of $6^2_3$ is commensurable with $\mathrm{PSL}_2(\mathcal{O}_7)$, so Corollary 1.3(1) of \cite{CheDeMonda} implies the desired conclusion (cf.~Fact 3.1 there). It follows for the complements of $5^2_1$ and $6^2_2$ by condition (2) of  \cite[Cor.~1.3]{CheDeMonda}, applying results of \cite{NeumReid} (cf.~\cite[\S 1.3]{CheDeMonda}) and \cite[Prop.~2.3]{CheDeMonda}, respectively.\end{proof}

%%%%%%%%%%%%%%%
\section{Two cautionary examples}\label{sec:CA}

\Cref{main_theorem} concerns a sequence of Dehn fillings $N(\bar{\alpha}_n)$ for which each $N(\bar{\alpha}_n)$ has a single cusp and $| \bar{\alpha}_n | \to \infty$.  It asserts that, if each $N(\bar{\alpha}_n)$ covers an orbifold with a single cusp, then so does the unfilled manifold $N$.  Furthermore, upon passing to a subsequence, each cover from $N(\bar{\alpha}_n)$ is an extension of the given cover from $N$.  \Cref{prop1} shows that there exist infinitely many hyperbolic knot complements with hidden symmetries and bounded volume if and only if this phenomenon occurs where $N$ is a hyperbolic link complement and the $N(\bar{\alpha}_n)$ are knot complements.

In \Cref{ex:three_cusped_example}, we show that the scenario addressed by  \Cref{main_theorem} \textit{does}  occur when we take $N$ to be the complement of the Borromean rings.  Here, the fillings are not knot complements -- such an example would yield a counterexample to  \Cref{main_conjecture} and, \textit{a fortiori}, Neumann--Reid's conjecture. 
Before we explain this example, we recall a related example introduced by Neumann and Reid in \cite{NR_rigidity}  where the unfilled manifold has two cusps but is not a link complement  (as shown in \cite{NR_rigidity} , its first homology has a $\mathbb{Z}/2\mathbb{Z}$ summand).   

But first we provide a proof of the lemma below, which pertains to both examples, giving a sufficient condition for a cover between cusped orbifolds to extend to their fillings. While we were unable to find a proof of this fact in the literature, it is likely well-known to experts.

\begin{lem}\label{lem:extend}
Suppose $p \co N \to Q$ is a cover from a manifold $N$ with torus boundary components to an orbifold $Q$.  Let $\gamma=(\gamma_1,\dots,\gamma_l)$ be a multi-slope for a subset of the boundary components of $N$ and $C$ a component of $\partial Q$, either a torus or $S^2(2,2,2,2)$, such that for each $i$ the component of $\partial N$ containing $\gamma_i$ maps to $C$. For a simple closed curve $\gbar$ on $C$ and $k\in\mathbb{N}$, the cover $p$ extends to a cover $p_\gamma$ from the Dehn filling $N_\gamma$ of $N$ along $\gamma$ to the Dehn filling $Q_{k\gbar}$ of $Q$ along a multiple $k\gbar$ of $\gbar$ if the following conditions hold:
\begin{enumerate}
	\item every curve in $p^{-1}(\gbar)$ has a multiple freely homotopic in $\partial N$ to some $\gamma_i$, and 
	\item for each $i$, $p(\gamma_i)$ is freely homotopic to $k\gbar$.
\end{enumerate}

\end{lem}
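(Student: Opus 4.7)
The plan is to construct the extension $p_\gamma$ by defining it on the filling pieces attached on both sides, and then verifying that the resulting map is an orbifold covering. Let $W$ denote the filling (orbi-)torus glued to $C$ in $Q_{k\gbar}$, and for each $i$ let $V_i$ denote the filling (orbi-)torus glued to the boundary component $T_i \supset \gamma_i$ of $N$ in $N_\gamma$. Away from $\bigsqcup_i V_i$ I would set $p_\gamma = p$, so two tasks remain: (a) for each $i$, produce a covering $V_i \to W$ that extends $p|_{T_i}\colon T_i \to C$; and (b) verify that the resulting glued map $p_\gamma\colon N_\gamma \to Q_{k\gbar}$ is an orbifold covering.

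For (a), I would argue at the level of fundamental groups. Writing $\gamma_i = n_i\beta_i$ with $\beta_i$ primitive, $V_i$ has core of order $n_i$ with meridian (orbi-)disk bounded by $\beta_i$, while $W$ has core of order $k$ with meridian orbi-disk bounded by $\gbar$. Condition (2) gives $n_i\cdot p_*(\beta_i) = p_*(\gamma_i) \sim k\gbar$ in $H_1(C)$, which forces $p_*(\beta_i)$ to be a scalar multiple of $\gbar$; integrality then yields $n_i \mid k$ and $p_*(\beta_i) = (k/n_i)\gbar$. This meridian compatibility is exactly what allows $p|_{T_i}$ to extend uniquely to a covering $V_i \to W$: the meridian (orbi-)disk of $V_i$ maps via a $(k/n_i)$-fold cover onto the meridian orbi-disk of $W$, and $\pi_1^{orb}(V_i) \cong \mathbb{Z}/n_i \times \mathbb{Z}$ embeds as a finite-index subgroup of $\pi_1^{orb}(W) \cong \mathbb{Z}/k \times \mathbb{Z}$ of the expected type.

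For (b), the glued map is automatically a local orbifold homeomorphism away from the filling locus, and the constructions of step (a) handle the neighborhood of each $V_i$. The only remaining concern is that some preimage of $W$ might arise outside $\bigsqcup_i V_i$---equivalently, that the preimage of a collar of $C$ in $Q$ meets some boundary component of $N$ other than the $T_i$'s. This is where condition (1) enters: every curve in $p^{-1}(\gbar)$ has a multiple freely homotopic to some $\gamma_i$, so every boundary component of $N$ containing a preimage of $\gbar$ is one of the $T_i$ being filled. Hence the preimage of $W$ under $p_\gamma$ is exactly $\bigsqcup_i V_i$, and $p_\gamma$ is a bona fide orbifold covering.

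The main obstacle I expect is handling the case $C = S^2(2,2,2,2)$, where $C$ is already a 2-orbifold, $p|_{T_i}$ is an orbifold cover of a pillowcase by a torus, and the filling attaches an orbi-pillow rather than a solid torus (see \Cref{whatsadiorama?}). Here $\gbar$ and $k\gbar$ must be interpreted via the orbi-pillow construction. I would lift $\gbar$ to the torus double cover of $C$, translate conditions (1) and (2) there, run the manifold-torus argument, and then descend. The divisibility relations between $n_i$ and $k$ may shift by a factor of $2$, but the structure of the argument is unchanged.
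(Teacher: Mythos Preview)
Your overall approach is sound and reaches the right conclusion, but it differs from the paper's proof in a meaningful way, and your step (a) is thinner than it needs to be.

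The paper does not use fundamental-group or covering-space abstractions to produce the extension on the filling pieces. Instead it works entirely by an explicit ``coning'' construction: it first extends $p$ over each meridian orbi-disk $D_\alpha$ bounded by a component $\alpha$ of $p^{-1}(\gbar)$ by coning $p|_\alpha$, then cuts each filling orbi-solid torus $V_i$ along these disks into orbi-2-handles $H$, and finally extends over each $H$ by coning $p_\gamma|_{\partial H}$ through an explicit identification of the target orbi-ball with a dihedral quotient of $\mathbb{D}^3$. This is carried out directly in the harder case $C=S^2(2,2,2,2)$, with the torus case declared similar. The advantage is that the argument is completely self-contained and visibly produces an orbifold covering on each piece; the pillowcase case is handled head-on rather than by lifting to the torus double cover.

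Your route via $\pi_1^{orb}$ is legitimate, but the sentence ``this meridian compatibility is exactly what allows $p|_{T_i}$ to extend uniquely to a covering $V_i\to W$'' is doing real work that you have not done. Knowing that $p_*$ induces a finite-index inclusion $\pi_1^{orb}(V_i)\hookrightarrow\pi_1^{orb}(W)$ tells you there is \emph{some} cover of $W$ with that fundamental group; you still owe an identification of that cover with $V_i$ in a way that matches $p|_{T_i}$ on the boundary. This can be done (check that the preimage of $\pi_1^{orb}$-image in $\pi_1(C)$ equals $p_*(\pi_1(T_i))$, so the boundary of the abstract cover is exactly $T_i$ with map $p|_{T_i}$, and then classify orbi-solid-torus covers), but it is not a one-liner. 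The paper's cone construction is precisely the missing explicit content of this step. Your use of condition~(1) in step (b), and your divisibility argument $n_i\mid k$ from condition~(2), are correct and match the paper's reasoning. Your proposed double-cover treatment of the pillowcase case would work but, as you note, requires care with factors of~$2$; the paper avoids this by attacking the pillowcase case directly.
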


\begin{stremark}The above result is easily extended to the more general setting, where $\gbar$ is a multi-curve $\gbar=(\gbar_1,\dots,\gbar_m)$, by extending along $p^{-1}(\gbar_j)$ for each $\gbar_j$, one at a time.
\end{stremark}

Above for a simple closed curve $\gbar$ on $C$, regarded as an embedding $\mathbb{S}^1\to C$, a \textit{multiple $k\gbar$ of $\gbar$} is the composition of $\gbar$ with a degree-$k$ covering map $\mathbb{S}^1\to\mathbb{S}^1$. In the below proof, for a topological space $X$, we will denote by $\cone(X)$ the \textit{cone} on $X$,  ie.~the quotient space $\cone(X)=(X\times I)/(X\times \{1\})$. For $(x,t)\in X\times I$, we'll denote by $[x,t]$ the equivalence class of $(x,t)$, so that $[x,1]$ is the cone point (for any choice of $x$).

\begin{proof}

We will assume that the boundary component $C$ of $Q$ that is filled is an $S^2(2,2,2,2)$ orbifold---the case where $C$ is a torus is similar. Let $P$ be the filling orbi-pillow for $C$ in $\Qf$. Then $\gbar$ bounds an orbi-disk $\bar{D}$ with a single order-$k$ singularity $\bar{c}$, properly embedded in $P$ such that $\bar{D}$ cuts $P$ into 2 orbi-balls $\bar{H}$ and $\bar{H}'$ with (2,2,k) dihedral isotropy graphs. Refer to \Cref{fig:cones}.

Fix a curve $\alpha$ in $p^{-1}(\gbar)$ and let $V_i\subset N_{\gamma}$ be the filling orbi-solid torus for the component $T_i$ of $\partial N$ containing $\alpha$. $V_i$ has a cone angle of $2\pi/l_i$ around its core, where the filling curve $\gamma_i$ contained in $T_i$ is a multiple $l_i\gbar_i$ of a simple closed curve $\gbar_i$. Because $\alpha$ is a simple closed curve with a power freely homotopic to $\gamma_i$, it is isotopic to $\gbar_i$ and hence bounds an orbi-disk $D_\alpha$ with a single order-$l_i$ singularity $\bar{c}_i$ that is properly embedded in $V_i$.

We can identify the underlying disk of $D_\alpha$ with $\cone(\alpha)$, and of $\bar{D}$ with $\cone(\gbar)$, taking each singular point to the corresponding cone point. Having done so, we define $p_\gamma$ on $D_\alpha$ by coning the restriction of $p$ to $\alpha$: for $x\in \alpha$ and $t\in I$, define $p_\gamma([x,t])=[p(x),t]$. Taking $\restr{p_\gamma}{N}=p$ thus gives a well-defined extension $p_{\gamma}$ of $p$ to $N\cup D_{\alpha}$. Moreover, $p_{\gamma}$ takes the singular point $[x,1]$ of $D_{\alpha}$ to that of $\bar{D}$, and the fact that $\restr{p}{\alpha}$ is a covering map to $\gbar$ implies that $\restr{p_\gamma}{D_{\alpha}}$ is an orbifold cover to $\bar{D}$ (of degree $j_i$, where $k = j_il_i$).

We may assume that the disks $D_{\alpha}$ bounded by components $\alpha$ of $p^{-1}(\gbar)$ were chosen so that for any distinct  such components $\alpha$ and $\alpha'$, $D_\alpha\cap D_{\alpha'}=\emptyset$. Then for any fixed component $T_i$ of $\partial N$ that intersects $p^{-1}(\gbar)$, the union of disks $\bigcup D_\alpha$, taken over the components $\alpha$ of $p^{-1}(\gamma)\cap T_i$, cut the filling orbi-solid torus $V_i$ into ``orbi-2-handles''. Such a handle $H$ has underlying space homeomorphic to $\mathbb{D}^2\times I$, and a cone angle of $2\pi/l_i$ about its core $\{\mathbf{0}\}\times I$. We have $\del H=D_\alpha\cup A\cup D_{\alpha'}$, where $\alpha$ and $\alpha'$ are components of $p^{-1}(\gbar)$ and $A\subset T_i$ is an annulus. Thus $p_\gamma$ is already defined on $\del H$, mapping it to the boundary of one of the orbi-balls $\bar{H}$ and $\bar{H}'$ comprising $P$. We now show how to extend $p_{\gamma}$ over $H$.

Suppose without loss of generality that $p_{\gamma}(\del H) = \del \bar{H}$. As an orbifold, $\bar{H}$ is isomorphic to the quotient of the unit ball $\mathbb{D}^3\subset\mathbb{R}^3$ by the action of the dihedral subgroup $D_{2k}$ of $O(3)$ generated by rotations about the $x$- and $z$-axes, of order $2$ and $k$, respectively. Recalling from above that $k = j_il_i$, we see that this quotient factors through that of $\mathbb{D}^3$ by $\langle \rho^{l_i}\rangle$, where $\rho$ is the order-$k$ rotation about the $z$-axis. Moreover, map lifting yields an isomorphism $\iota\co\del H\to \del(\mathbb{D}^3/\langle\rho^{l_i}\rangle)$ such that the following diagram commutes:
\[ \xymatrix{ \del H \ar[r]^{\iota}\ar[d]^{p_{\gamma}} & \mathbb{D}^3/\langle\rho^{l_i}\rangle \ar[d] \\
	 \bar{H} \ar[r] & \mathbb{D}^3/D_{2k} } \]
We may use this to extend $p_{\gamma}$ across $H$ by just extending $\iota$ to an isomorphism $H\to\mathbb{D}^3/\langle\rho^{l_i}\rangle$.  We may accomplish this by coning again, first identifying the underlying ball of $H$ with $\cone(\del H)$ so that the singular locus is the cone of the two boundary singular points. Then map $\cone(\del H)$ to $\mathbb{D}^3/\langle\rho^{l_i}\rangle$ taking the cone point to the origin and $[x,t]$ to $(1-t)\iota(x)$ for each $x\in\del H$ and $t\in I$. This map determines an extension of $p_{\gamma}$ to an orbifold cover $H\to\bar{H}$ whose branching locus is the cone of the branching locus of $\restr{p_{\gamma}}{\del H}$.

After thus extending $p_\gamma$ on all of the 2-handles, we get a map $p_\gamma:N_\gamma\to Q_{k\gbar}$, which is an orbifold cover by construction.
\end{proof}

\begin{figure}%[h] why not let it float?
	\begin{subfigure}{.49\textwidth}
 		\centering
   		\includegraphics[scale=.9]{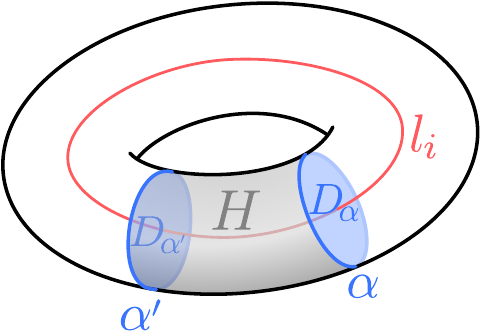}
   		\caption{}
   		\label{fig:solid_torus}
	\end{subfigure}
	\begin{subfigure}{.49\textwidth}
 		\centering
   		\includegraphics[scale=.9]{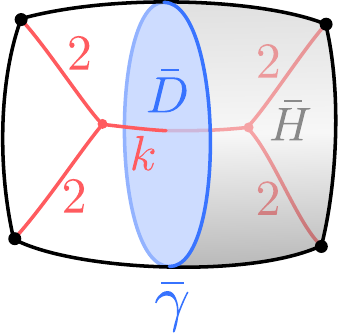}
   		\caption{}
   		\label{fig:pillow}
	\end{subfigure}	
	\caption{Left: A 2-handle $H$ in a filling solid torus of $N_\gamma$. Right: A filling pillow for $Q_{k\gbar}$ decomposes along $\bar{D}$ into two cones $\bar{H}$ (right half) and $\bar{H}'$ (left half, not labelled). We define $p_\gamma$ on $D_\alpha$ and $D_{\alpha'}$ by coning from $p|_\alpha$ and $p|_{\alpha'}$, then we extend it to $H=\cone(\del H)$ by coning from $p_\gamma|_{\del H}$. }
	\label{fig:cones}
\end{figure}

\begin{example}\label{two_cusped_example} 
The proof of Theorem 2 of \cite{NR_rigidity} describes a finite-volume, two-cusped hyperbolic $3$-manifold $N=\mathbb{H}^3/G_4$ (now known as census manifold \texttt{t12046}, or \texttt{ooct02\_00020}) that covers a two-cusped orbifold $\orbQ=\mathbb{H}^3/G_1$ which has underlying space a ball and isotropy graph pictured in  \Cref{Fig6prime}.  The empty circles in the figure represent the cusps of $\orbQ$,  each of which has cross section a Euclidean orbifold with underlying space $\mathbb{S}^2$. With four edges of the isotropy graph incident on it, each labeled $2$, the lower circle represents a pillowcase cusp. Analogously, the upper circle represents a rigid $S^2(2,4,4)$ cusp.  (Note that $N$ is called ``$M$'' in \cite{NR_rigidity}.)  

\begin{figure}
\begin{tikzpicture}

\draw (0,0) circle [radius=1];
\fill [color=white] (0,1) circle [radius=0.2];
\draw (0,1) circle [radius=0.2];

\draw (-1,0) -- (0.5,0.3) -- (1,0);
\fill [color=white] (0,0.2) circle [radius=0.1];
\draw (0,0.8) -- (0,-0.8);
\draw (0.5,0.3) -- (0,-1);
\fill [color=white] (0,-1) circle [radius=0.2];
\draw (0,-1) circle [radius=0.2];

\node [above right] at (0.75,0.45) {$4$};
\node [above left] at (-0.75,0.45) {$4$};
\node [below left] at (-0.75,-0.45) {$2$};
\node [below right] at (0.75,-0.45) {$2$};
\node at (-0.15,-0.15) {$2$};
\node at (-0.5,0.3) {$2$};
\node at (0.45,-0.3) {$2$};
\node at (0.7,-0.05) {$2$};

\end{tikzpicture}
\caption{$\orbQ \doteq\mathbb{H}^3/G_1$} 
\label{Fig6prime}
\end{figure}

Our main goal in this example is to highlight the following assertion about $N$ and $\orbQ$ from \cite[p.~230]{NR_rigidity}, quoted here with our notation substituted: 

\begin{quote}One of the two cusps of $N$ covers the (non-rigid) pillow[case] cusp of $\orbQ$, while the other covers the (rigid) $(2,4,4)$-cusp. Hyperbolic Dehn filling on the former cusp of $N$ lifts from hyperbolic Dehn filling of the non-rigid cusp of $\orbQ$, so under such Dehn fillings, the other cusp of $N$ is still the cover of the $(2,4,4)$-cusp of $\orbQ$, and hence geometrically invariant.\end{quote}

Any term in an infinite sequence of such fillings of $\orbQ$ has a rigid cusp, so the sequence of corresponding fillings of $N$ will satisfy the hypotheses and  conclusion of  \Cref{main_theorem}.

The same sequence of fillings of $N$ also satisfies the hypotheses and conclusions of \cite[Corollary 1.8]{CheDeMonda}, in particular its conclusion (2) pertaining to geometric isolation (the subject of \cite[Theorem 2]{NR_rigidity}). We will discuss geometric isolation further in \Cref{isooo}.
\end{example}

We now exhibit a second example where the hypotheses and conclusion of Proposition \ref{main_theorem} hold for a sequence of one-cusped manifolds 
which converge to the complement of the Borromean rings.  This link complement has three cusps and covers $\orbQ = \mathbb{H}^3/G_1$ from  \Cref{two_cusped_example}.  Our discussion below includes some additional comparisons between this pair of examples.

\begin{example}\label{ex:three_cusped_example}  
For this example let $N$ denote the Borromean rings complement.   \Cref{fig:BorromeanRingsCover} pictures an eight-to-one cover $\phi \co N \to \orbQ$, where $\orbQ$ is the same orbifold $\orbQ$ as in  \Cref{Fig6prime} and  \Cref{two_cusped_example}.  This cover factors as a  sequence $N\to \orbQ_1 \to \orbQ_0 \to \orbQ$ of intermediate covers, each a two-to-one involution quotient. Inspecting \Cref{fig:BorromeanRingsCover} reveals that each cusp of $N$ covers a distinct cusp of $\orbQ_0$, and two distinct cusps of $\orbQ_0$ cover the pillowcase cusp $c$ of $\orbQ$. Therefore the rigid $S^2(2,4,4)$ cusp of $\orbQ$ has connected preimage in $N$, and the other two cusps of $N$ cover $c$.

%%%%%%%%%%%%%%%%%%%%%%%%%%%%%%%%%%%%
%%%%%%%%%%%%%%%%%%%%%%%%%%%%%%%%%%%%

\begin{figure}
\includegraphics[width=6in]{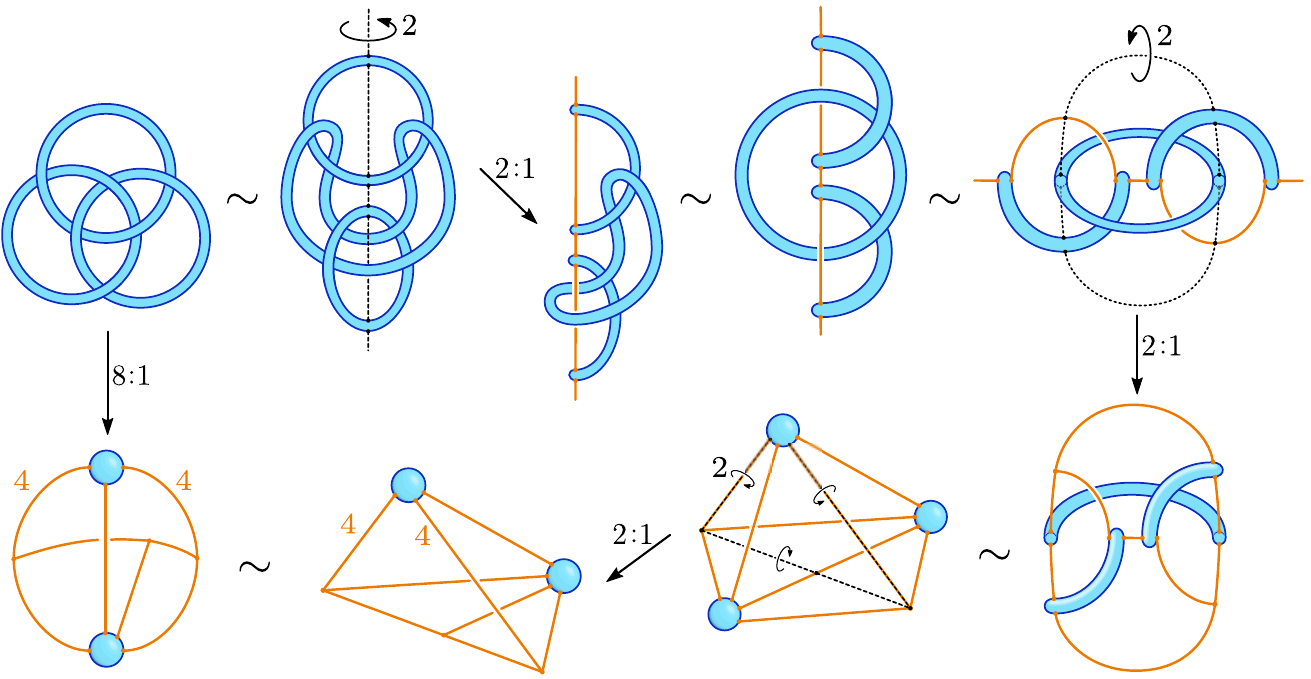}
\caption{\label{fig:BorromeanRingsCover} The cover of the orbifold in \Cref{Fig6prime} by the Borromean rings. All edges are order 2 unless labelled otherwise.}
\end{figure}

Let $c_1$ and $c_2$ be the cusps of $N$ that cover $c$. Unlike in Example \ref{two_cusped_example}, many fillings of $c_1$ and $c_2$ do not produce manifolds that cover a filling of $c$. In particular, certain double twist knot complements are produced by fillings of $c_1$ and $c_2$, as pictured in \Cref{fig:doubleTwistKnots}. These are 2-bridge knot complements, which are known not to have $\Q(i)$ as a subfield of their invariant trace field by \cite[Proposition 2.5]{ReidWalsh}. Therefore none of these covers a filling of the pillowcase cusp of $\orbQ$: the $S^2(2,4,4)$ cusp of such a filling forces every cover to have cusp field $\mathbb{Q}(i)$.

\begin{figure}[h]
	\begin{subfigure}{.49\textwidth}
 		\centering
   		\includegraphics[scale=1]{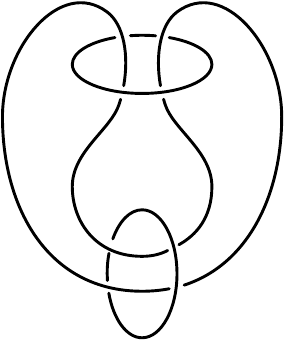}
   		\label{fig:borromean}
   		\caption{}
	\end{subfigure}
	\begin{subfigure}{.49\textwidth}
 		\centering
   		\includegraphics[scale=1]{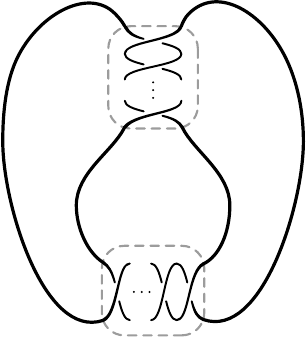}
   		\label{fig:double_twist}
   		\caption{}
	\end{subfigure}	
	\caption{Performing $(1,q_1)$ and $(1,q_2)$ Dehn surgery on the circular components of the Borromean rings, as drawn in (a), results in the double twist knot shown in (b). A double twist knot obtained in this way will have $2q_1$ half twists in one twist region, and $2q_2$ in the other.}
	\label{fig:doubleTwistKnots}
\end{figure}
 
On the other hand, many fillings of $c_1$ and $c_2$ do cover fillings of $c$. Take $\Pi$ to be the maximal $\mathbb{Z}\times\mathbb{Z}$ subgroup of the orbifold fundamental group of $c$, and let $\{\alpha_n\}\subset\Pi$ be an infinite sequence whose members all lie in the intersection of the finite-index sub-lattices $\pi_1 c_1$ and $\pi_1 c_2$ of $\Pi$. Then all but finitely many $\alpha_n$ determine hyperbolic fillings of $c_0$ by the hyperbolic Dehn surgery theorem (see \cite[\S 5.3]{DM} for the orbifold version), and by \Cref{lem:extend} 
each of these is covered by fillings of $c_1$ and $c_2$. Such a sequence $\{\alpha_n\}$ therefore determines a sequence $\{M_n\}$ of one-cusped manifolds converging to $N$ and satisfying the hypotheses and conclusions of  \Cref{main_theorem}.
 We point out, though, that no such filling is a knot complement by recent work of the third author \cite[Theorem 1.1]{Hoffman_rigid_cusps}. 
\end{example}

{ \subsection{Geometric Isolation}\label{isooo}
 Here we analyze \Cref{two_cusped_example},  \Cref{ex:three_cusped_example}, and  \Cref{main_conjecture} in the context of \textit{geometric isolation}. Given an n-cusped hyperbolic $3$-orbifold with some choice of labeling on the cusps, we say that cusps $1, \ldots, k$ are \textit{geometrically isolated} from cusps $k+1, \ldots, n$ if the cusp shapes of the first $k$ cusps only change under finitely many Dehn fillings along cusps $k+1, \ldots, n$. (There seems to be some variations of this definition in the literature.) For instance, \Cref{two_cusped_example} which comes from \cite[Theorem 1]{NR_rigidity}, shows that there are infinitely many $2$-cusped hyperbolic $3$-manifolds whose pairs of cusps are geometrically isolated from one another. In the context of \Cref{prop1}, finding  n-component hyperbolic link complements with one cusp $c$ geometrically isolated from the other $n-1$ cusps is a reasonable starting point to explore examples that could show that \Cref{main_conjecture} is false.  As a result, it's natural for us to consider when geometric isolation occurs among link complements and then analyze their fillings and covers, as done in \Cref{two_cusped_example}.
 
This geometric isolation phenomenon is further analyzed in  \cite{CheDeMonda}. Proposition 1.6 there describes a rational function $\tau_c$ on the deformation variety of a complete hyperbolic manifold $N$ associated to a cusp $c$ of $N$, which measures the ``cusp parameter'' of $c$ on the (complex)  codimension-one subvariety $V_0$ consisting of hyperbolic structures where $N$ is complete. In the proof of Corollary 1.8 there it is observed that when $N$ has exactly two cusps, $\tau_c$ is constant on $V_0$ if an infinite sequence $\{M_n\}$ of fillings of the other cusp yield manifolds covering orbifolds with rigid cusps. In particular, the cusp $c$ of $N$ is geometrically isolated from the other cusp of $N$; see Corollary 1.3 from \cite{CheDeMonda}. 

 \Cref{ex:three_cusped_example} shows that when $N$ has three cusps, there may exist a sequence of one-cusped fillings $\{M_n\}$ that cover orbifolds with rigid cusps even while $\tau_c$ is non-constant on $V_0$. {As described above, taking $N$ to be the Borromean rings complement, one of the $M_n$ can be constructed by choosing a filling slope of $\alpha_i$ for one cusp of $N$ and then filling along $\alpha_j$ the other cusp along a slope $\alpha_j$ such that under the cover in \Cref{fig:BorromeanRingsCover} the images of $\alpha_i$ and $\alpha_j$ are identical. However, if we perform $(1,q_1)$ and $(1, q_2)$ fillings along the cusps of the Borromean rings complement $N$ we obtain the double twist knot complements, which never have cusp parameter in $\mathbb{Q}(i)$ by \cite[Proposition 2.5]{ReidWalsh}.}
 This is not a surprise since $V_0$ has dimension two in this case and we expect the locus where $\tau_c$ takes the same value as at the complete structure to have codimension one in $V_0$. What seems exceptional about the case of  \Cref{ex:three_cusped_example} is that the points of $V_0$ corresponding to the $M_n$ lie in this locus.}

%%%%%%%%%%%%%%%%%%%%%%%%%%
\section{Effectivization}\label{sec:effectivization}

Our main results give us tools for showing that no infinite sequence of knot complements with hidden symmetries converges to a given link complement $N=\Sth\setminus L$. It is natural to ask for an \textit{effective} version of these results, where the number of such knot complements that can be produced by filling cusps of $N$ is bounded above in terms of some properties of $N$. The machinery of $(\epsilon,d_N)$-fillings described in Section \ref{sub:edn} can be used with recent work of Futer--Purcell--Schleimer \cite{FPS19} to produce such bounds, provided the systole of $N$ is known (see \cite[Section 6]{HMW19}). Unfortunately, the numbers it yields are not small enough to render the collection of remaining fillings computationally approachable at present. 

In this section we introduce new methods to prove two effectivization results that lack this shortcoming, with the trade-off that they apply only to knot complements that cover orbifolds of low volume. For the first, recall that ``$v_0$'' denotes the volume of a regular ideal tetrahedron, roughly $1.01$.

\begin{thm}\label{thm:volume_bound_gen}\VolumeBoundGen
\end{thm}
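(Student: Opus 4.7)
The plan is to argue by cases on whether the covering knot complement $K$ is arithmetic, invoking Neumann--Reid's criterion that $K$ has hidden symmetries iff it covers a rigid-cusped orbifold. Since knot complements have a single torus cusp and covers are surjective on cusps, any rigid-cusped orbifold $Q$ covered by $K$ must have exactly one cusp, of type $S^2(2,3,6)$, $S^2(2,4,4)$, or $S^2(3,3,3)$.

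If $K$ is arithmetic, then by Reid's theorem $K$ is the figure-eight knot complement with $\vol(K) = 2v_0$. Hence $\vol(M) = 2v_0/d$ for some integer $d \geq 1$. The Cao--Meyerhoff lower bound ensures every orientable cusped hyperbolic 3-manifold has volume at least $2v_0$, forcing $d = 1$ and $M = K$.

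If $K$ is non-arithmetic, let $Q$ denote the minimal orbifold in the commensurability class of $K$; then $Q$ has a rigid cusp, and $M$, lying in the same commensurability class, also covers $Q$. Because the cusp of $K$ must hit every cusp of $M$ surjectively and $K$ has one cusp, $M$ has one cusp --- a torus. The restriction of $M \to Q$ to cusps is thus a torus cover of the rigid cusp of $Q$, and its cusp-cover degree equals the full degree $\deg(M \to Q)$. The minimum degree of a torus cover of the three rigid Euclidean orbifolds is $6$, $4$, or $3$ (the order of the holonomy group of the corresponding wallpaper group), so in the worst case
\[ \vol(M) = \deg(M \to Q) \cdot \vol(Q) \geq 3 \cdot \vol(Q). \]

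The plan is now to show this product exceeds $6 v_0$ in every non-arithmetic possibility. Adams' classification of low-volume cusped hyperbolic 3-orbifolds explicitly enumerates the smallest such orbifolds: the smallest examples are arithmetic, so in the non-arithmetic setting one obtains a strict lower bound on $\vol(Q)$. Hoffman's lower bounds on $\deg(K \to Q)$ for knot complements $K$ covering rigid-cusped orbifolds further rule out small degree covers and constrain which $Q$ in Adams' list can actually appear. The main obstacle is a finite case-by-case check organized by rigid cusp type: for each small-volume non-arithmetic candidate $Q$ from Adams' list, verify that either Hoffman's degree bound forces $\deg(M \to Q) \cdot \vol(Q) > 6v_0$, or $\vol(Q)$ is itself already above the relevant threshold ($v_0$, $3v_0/2$, or $2v_0$ for cusp types $(2,3,6)$, $(2,4,4)$, $(3,3,3)$ respectively). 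In all non-arithmetic cases this yields $\vol(M) > 6 v_0$, contradicting the hypothesis and completing the proof.
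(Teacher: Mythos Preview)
Your arithmetic case is fine and in fact more explicit than the paper's own treatment. The non-arithmetic case has the right architecture---pass to the minimal orbifold $Q$, observe that $M$ has one torus cusp and covers $Q$, and bound $\vol(M) = \deg(M\to Q)\cdot\vol(Q)$ from below---but the numerical content is where it breaks down.

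The cusp-cover degree bounds you quote ($6$, $4$, $3$ for cusp types $S^2(2,3,6)$, $S^2(2,4,4)$, $S^2(3,3,3)$) are far too weak. Combined with the Adams--Neumann--Reid lower bounds on the volume of a non-arithmetic rigid-cusped orbifold ($v_0/4$, $v_0/(2\sqrt{3})$, $v_0/2$ respectively; see Lemma~\ref{lem:ANR_vol_bound}), they give only $\vol(M) > 3v_0/2$ in each case---off by a factor of four. Your proposed remedy, ``a finite case-by-case check'' against Adams' list using thresholds $v_0$, $3v_0/2$, $2v_0$, does not work: Adams' classification only determines orbifolds below the \emph{low} thresholds $v_0/4$, $v_0/(2\sqrt{3})$, $v_0/2$, and there is no enumeration of non-arithmetic rigid-cusped orbifolds up to volume $2v_0$ to check against.

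The actual mechanism is the one you gesture at but misapply. Hoffman's results bound $\deg(M\to Q)$, not $\deg(K\to Q)$: for any \emph{manifold} $M$ covered by a knot complement and covering a rigid-cusped orbifold $Q$, the degree of $M\to Q$ is a multiple of $24$ if $Q$ has an $S^2(2,3,6)$ cusp, at least $24$ if $S^2(2,4,4)$, and a multiple of $12$ if $S^2(3,3,3)$. (Bounds on $\deg(K\to Q)$ alone would not suffice, since in the factorization $K\to M\to Q$ the degree could concentrate on the first arrow.) Multiplying these degree bounds against the Adams--Neumann--Reid volume bounds gives $\vol(M) > 6v_0$ directly in every case, with no case-by-case orbifold enumeration needed. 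That is the whole proof in the non-arithmetic regime.
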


\Cref{thm:volume_bound_gen} implies in particular that the only hyperbolic knot complement with volume at most $6v_0$ and hidden symmetries is that of the figure-eight, but it also applies to hyperbolic knot complements with unbounded volume. For instance, suppose $L = K\sqcup K'$ is a hyperbolic two-component link such that $N\doteq \Sth \setminus L$ has volume at most $6v_0$, $K'$ is unknotted, and $K$ has non-zero linking number with $K'$. Then there are knot complements covering lens space fillings of the $K'$-cusp of $N$ with arbitrarily large degree. See \Cref{prop:linking_number} below, and \Cref{eminem} for an infinite family of link complements as above.

The second main result of this section gives a short list of explicit criteria that \textit{any} filling slope on the $K'$-cusp of a link complement $N$ as in the paragraph above must satisfy if it is to produce a manifold or orbifold covered by a knot complement with hidden symmetries.

\begin{thm}\label{earem}\TheoremEarem
\end{thm}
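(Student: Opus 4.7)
My plan is to combine three ingredients: the $(\epsilon, d_N)$-twisted filling machinery of \Cref{prop:edn}, the Neumann--Reid characterization \cite[Prop.~9.1]{NeumReid} of hidden symmetries via covers to rigid-cusped orbifolds, and the quantitative hyperbolic Dehn surgery results of Hodgson--Kerckhoff \cite{HK2008shape}. Setup: let $M$ be a hyperbolic knot complement with hidden symmetries covering $N(p,q)$. Then $M$ covers some orbifold $\orbQ_M$ with a rigid cusp, placing $M$, $N(p,q)$, and $\orbQ_M$ in a common commensurability class. Passing through the commensurator, I arrange a cover $\phi\co N(p,q) \to \orbQ$ whose target has a rigid cusp.

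For (i), I would argue the contrapositive. Suppose the normalized length of $(p,q)$ exceeds $7.5832$; then by \cite{HK2008shape} the core geodesic of the filling solid torus is short with an embedded tube of definite radius, so $N(p,q)$ is an $(\epsilon, d_N)$-twisted filling of $N$ for some $\epsilon$ below the 3-dimensional Margulis constant (the threshold $7.5832$ being exactly the HK cutoff at which this geometric control becomes available, cf.~the normalized-length remark after \Cref{prop:edn}). Applying \Cref{prop:edn}(ii) to $\phi$ yields a cover $\widehat{\phi}\co N \to \orbQ'$, where $\orbQ' \cong \orbQ \setminus \phi(\gamma)$ is $\orbQ$ with the image of the core geodesic $\gamma$ drilled. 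The rigid cusp of $\orbQ$ is preserved by drilling, so $\orbQ'$ still has a rigid cusp. Now $N$ is a two-cusped link complement of volume at most $6v_0$ covering $\orbQ'$, which has at least two cusps (one rigid, one newly drilled and therefore non-rigid); combining the volume bound with Hoffman-style degree bounds for covers to rigid-cusped orbifolds \cite{HMW19} and the enforced cusp bijection forced by cusp counts, I expect to reach a contradiction.

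For (ii), I use the classical branched-cover picture for fillings of an unknotted component. Writing $(p,q) = k(p',q')$ with $\gcd(p',q') = 1$, the $(p,q)$-filling of $N$ at the unknotted $K'$-cusp has underlying space the lens space $L(p',q')$ with a singular core of order $k$. Any knot complement $\Sth \setminus \widetilde{K}$ covering $N(p,q)$ must factor through a degree-$p$ orbifold cover $\Sth \to N(p,q)$ obtained by composing the $p'$-fold universal cover $\Sth \to L(p',q')$ with the $k$-fold cyclic cover branched over the singular core. Under this cover, the component $K \subset L(p',q')$ has homology class $\ell \in \Z/p'$, and its preimage is connected (hence a knot in $\Sth$) precisely when $\gcd(p,\ell) = 1$. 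Since $M$ is required to be a knot complement, this coprimality is forced.

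For (iii), set $k = \gcd(p,q)$. The $k$-torsion core of $N(p,q)$ maps under $\phi$ to a singular locus of $\orbQ$ whose isotropy has order $n$ with $k \mid n$. The singular loci of $\orbQ$ incident to its rigid cusp have orders in $\{2, 3, 4, 6\}$, and a tracing argument shows the image of the core terminates at the rigid cusp, so $k \in \{1, 2, 3, 4, 6\}$. The cases $k \in \{3, 6\}$ correspond to $\orbQ$ having a $(3,3,3)$ or $(2,3,6)$ cusp, forcing the cusp field of $N(p,q)$'s cusp to be $\mathbb{Q}(\sqrt{-3})$; combining this with part (ii) and HK's cusp-shape control, these cases should be ruled out. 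I expect the main obstacle to lie in making the volume/degree contradiction in part (i) explicit, particularly enumerating the cusps of $\orbQ'$ against the two cusps of $N$ and bounding the degree of $\widehat{\phi}$ from above in a way that genuinely conflicts with the rigid-cusp hypothesis; parts (ii) and (iii) should follow relatively directly once the setup is established.
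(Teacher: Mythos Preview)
Your approach to part (ii) matches the paper's (via \Cref{prop:linking_number}). But parts (i) and (iii) have real gaps, and the paper's argument is quite different from what you outline.

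\textbf{Part (i).} You want to apply \Cref{prop:edn} to obtain a cover $N\to\orbQ'$ with $\orbQ'$ rigid-cusped, and then contradict this via volume/degree bounds. But there is no such contradiction available: a two-cusped link complement of volume $\le 6v_0$ \emph{can} cover a rigid-cusped orbifold (indeed, the arithmetic two-bridge link complements do, and \Cref{ex:three_cusped_example} shows the Borromean rings do). Hoffman's degree bounds apply to covers whose source is covered by a \emph{knot} complement; your $N$ is a two-component link complement and the drilled $\orbQ'$ has two non-rigid cusps in addition to the rigid one, so the cusp-killing and degree machinery do not bite. The paper instead argues entirely inside the filled orbifold $N(p,q)$: the HK bound $7.5832$ gives a core geodesic $\gamma$ of length $\le 0.156012$ with tube radius $> \arctanh(1/\sqrt{3})$. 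For $\gcd(p,q)=1$ one invokes \Cref{thm:volume_bound_gen} directly and contradicts the figure-eight's systole. For $\gcd(p,q)>1$, \Cref{lem:cusp_shapes_6_2_2_fillings} shows that $\phi\co N(p,q)\to\orb$ has local degree $6$ at some point $\tilde{v}\in\gamma$ mapping to a vertex with isotropy $A_4$ or $S_4$, and is unbranched on $\gamma-\tilde{v}$ nearby. This forces $\phi^{-1}(\phi(\gamma))$ to contain a geodesic distinct from $\gamma$ meeting $\gamma$ at $\tilde{v}$, of total length $<0.47$; but such a geodesic cannot escape the tube around $\gamma$, contradicting uniqueness of $\gamma$ in its tube.

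\textbf{Part (iii).} Your ``tracing argument'' asserting that the image of the core geodesic terminates at the rigid cusp is wrong: $\gamma$ is a closed geodesic, so $\phi(\gamma)$ is compact and does not run out a cusp. The actual constraint comes from \emph{vertices} in $\phi(\gamma)$. The paper shows (\Cref{lem:cusp_shapes_6_2_2_fillings}) that the cusp-killing hypothesis forces some vertex $v\in\phi(\gamma)$ to have isotropy $A_4$ or $S_4$; since $\deg\phi<12$ (volume bound plus \Cref{lem:ANR_vol_bound}), the isotropy at $\tilde{v}\in\phi^{-1}(v)$ is a cyclic subgroup of $A_4$ or $S_4$ of index at most $9$. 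The only such cyclic subgroups have order $2$ or $4$, whence $\gcd(p,q)\in\{2,4\}$. The value $3$ is excluded not by a cusp-field argument but because cyclic subgroups of order $3$ in $A_4$ and $S_4$ have index $\ge 4$, and combined with the $3\mid\deg\phi$ constraint from the $S^2(3,3,3)$ cusp this is incompatible with the local degree count. So the candidate set is $\{1,2,4\}$ from the start, not $\{1,2,3,4,6\}$.
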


Here following Hodgson--Kerckhoff, we define the \textit{normalized length} of a peripheral curve to be the length of the curve's geodesic representative in the Euclidean metric on a horospherical cusp cross-section $T$, divided by $\sqrt{\mathrm{area}(T)}$. The set of slopes satisfying the criteria above is small enough in individual cases to be practically checked using rigorous computational methods. We illustrate this in \Cref{sec:sixtwotwo} by using \Cref{earem} to prove \Cref{sixtwotwo}, which asserts that a cusp of the $6^2_2$-link complement has exactly one filling covered by a hyperbolic knot complement with hidden symmetries.

Both main results of this section use the low-volume hypothesis to play off results of the third author, which give lower bounds on the degree of covers to rigid-cusped orbifolds, against results of Adams classifying the lowest-volume (rigid-)cusped orbifolds. \Cref{thm:volume_bound_gen} follows quickly from this strategy, in \Cref{sub:mani}. For \Cref{earem} we also leverage work of Hodgson--Kerckoff, which shows that if the filling slope is long then the geodesic introduced by Dehn filling is short with a large tubular neighborhood. A careful study of orbifold covers in the situation in question rules this out, and also leads to further restrictions on $p$ and $q$.

Finally, in \Cref{sub:orbifold_fillings} we give some related questions and directions for future work.

%%%%%%%%%%%%%%%%%%%%
\subsection{Linking number and Dehn fillings} \label{subsec:pre}

Before proceeding further, here we clarify exactly which fillings of one cusp of a two-cusped manifold $N$ are covered by knot complements in $\Sth$, in the current case of interest: $N = \Sth \setminus L$ for a link $L = K\sqcup K'$, where $K$ and $K'$ are tame knots with $K'$ unknotted. The main result \Cref{prop:linking_number} of this sub-section shows that this property is completely determined by the linking number between $K$ and $K'$. Then in \Cref{eminem} we describe an infinite family of two-component links with unknotted components and low-volume complements.

Recall that the \textit{linking number} between $K$ and $K'$ can be defined (up to a factor of $\pm 1$) as the oriented intersection number between $K$ and a \textit{Seifert surface} for $K'$, an embedded orientable surface $S\subset\Sth$ with a single boundary component such that $\partial S = K'$. 
Letting $n(K')$ be a closed regular neighborhood of $K'$, chosen so that $K\cap n(K') =\emptyset$ and $S\cap n(K')$ is a collar of $K'=\partial S$ in $S$, it follows that $S$ intersects the boundary $T$ of $n(K')$ in the longitude $\lambda$ of the standard homological framing. Compare \cite[5D]{Rolfsen}.

$K'$ is unknotted if and only if it has a Seifert surface $D$ homeomorphic to a disk. In this case, the Dehn twist of $T$ about $\lambda$ extends to a self-homeomorphism of $\Sth \setminus \mathrm{int}(n(K'))$ supported in a neighborhood of $D$. Taking powers of this Dehn twist, for any $q\in\mathbb{Z}$ we thus have a self-homeomorphism of $\Sth \setminus \mathrm{int}(n(K'))$ fixing $\lambda$ and mapping the meridian $\mu$ to a curve with homology class $[\mu]+q[\lambda]$ (where coordinates are given in the standard homological framing of \Cref{whatsadiorama?}). This map extends to a homeomorphism from $\Sth$ to the $(1,q)$-surgery along $K'$, and we obtain the well-known fact:

\begin{fact}\label{HoobieDoobie} For a two-component link $L = K\sqcup K'\subset\Sth$ with $K'$ unknotted, and any $q\in\mathbb{Z}-\{0\}$, the $\pm(1,q)$-filling on the $K'$-cusp of $\Sth \setminus L$ (given the standard homological framing) yields a knot complement in $\Sth$.\end{fact}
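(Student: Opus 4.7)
The plan is to reduce any $\pm(1,q)$-filling of the $K'$-cusp to the $(1,0)$-filling, for which the conclusion is immediate: the $(1,0)$-filling simply glues a solid torus back along the standard meridian of $K'$, yielding $\Sth\setminus K$ with the $K$-cusp still present, which is a knot complement in $\Sth$. The reduction comes from building a self-homeomorphism $\Psi\co X\to X$ of the two-cusped link exterior $X = \Sth\setminus(n(K)\cup n(K'))$ that carries the meridian $\mu$ on $T=\partial n(K')$ to a curve representing $[\mu]\pm q[\lambda]$, and then extending $\Psi$ across the filling solid torus. The extension exists because a homeomorphism of a solid torus is determined up to isotopy by its action on the meridian, and here $\Psi$ on $T$ matches the meridian of the $(1,0)$-filling to the gluing slope of the $\pm(1,q)$-filling. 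Because $\Psi$ can be taken to fix $n(K)\subset X$ setwise, the resulting homeomorphism between fillings leaves the $K$-cusp untouched, so the $\pm(1,q)$-filling is homeomorphic to $\Sth\setminus K_q$ for some knot $K_q$.

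To build $\Psi$, I would use the hypothesis that $K'$ is unknotted to choose an embedded disk $D\subset\Sth$ with $\partial D=K'$, arranged after a small isotopy so that $D\cap n(K')$ is a collar of $K'$ in $D$ and $D$ is disjoint from $n(K)$. Then $D'=D\cap X$ is a properly embedded disk in $X$ whose boundary represents the standard longitude $\lambda$ on $T$, matching the characterization of $\lambda$ given in \Cref{whatsadiorama?}. Let $\tau\co X\to X$ be the Dehn twist about $D'$, supported in a bicollar $D'\times[-1,1]\subset X$ and given there by $(x,t)\mapsto (R_{\pi(t+1)}(x),t)$, where $R_\theta$ rotates $D'$ through angle $\theta$ while fixing $\partial D'$ pointwise. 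A standard computation shows that the induced action of $\tau$ on $H_1(T;\Z)$ fixes $[\lambda]$ and sends $[\mu]$ to $[\mu]\pm[\lambda]$, with the sign determined by orientation conventions on $D$. Iterating, $\Psi=\tau^q$ takes $[\mu]$ to $[\mu]\pm q[\lambda]$, as required.

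There is no substantive obstacle; the only care needed is the orientation bookkeeping that determines whether $\tau$ moves $[\mu]$ to $[\mu]+[\lambda]$ or $[\mu]-[\lambda]$, and correspondingly whether $\tau^q$ realizes the $+(1,q)$- or $-(1,q)$-slope. Since $q$ ranges over all of $\Z-\{0\}$, both signs are covered by the statement, which is exactly what the $\pm$ accommodates. Combining the steps, the $\pm(1,q)$-filling of the $K'$-cusp of $\Sth\setminus L$ is homeomorphic to $\Sth\setminus K_q$, a knot complement in $\Sth$.
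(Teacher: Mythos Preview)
Your overall strategy is exactly the paper's: use the disk $D$ bounded by the unknotted component $K'$ to build a Dehn twist that carries the meridian to $\mu\pm q\lambda$, and then extend over the filling torus. But there is a genuine gap in your execution. You assert that, after a small isotopy, $D$ can be arranged to be disjoint from $n(K)$, so that $D'=D\cap X$ is a properly embedded disk in the link exterior $X$. This is false whenever the linking number $\ell(K,K')$ is nonzero: $\ell$ equals the algebraic intersection number of $K$ with $D$, so $K$ must meet $D$ and $D'$ is a punctured disk with extra boundary components on $\partial n(K)$. In that situation there is no Dehn twist of $X$ about $D'$ with the properties you claim, and your argument breaks down. (Note that the links of interest in the paper, e.g.~$6^2_2$ with $\ell=3$, are precisely of this type.)

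The easy fix, and what the paper does, is to Dehn twist in the larger space $\Sth\setminus\mathrm{int}(n(K'))$ rather than in $X$. There $D$ minus its collar is genuinely a properly embedded disk with boundary $\lambda$, so the twist $\tau$ and its iterates $\Psi=\tau^q$ make sense. Since $K$ lies in $\Sth\setminus\mathrm{int}(n(K'))$, the map $\Psi$ carries $K$ to a new knot $K_q$ (it will not fix $K$ unless $K$ already avoids $D$). Extending $\Psi$ over the filling torus gives a homeomorphism from $\Sth$ (the $(1,0)$-filling) to the $(1,q)$-surgery along $K'$, sending $K$ to $K_q$; restricting to complements shows the $(1,q)$-filling of the $K'$-cusp of $\Sth\setminus L$ is $\Sth\setminus K_q$. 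Your claim that ``$\Psi$ can be taken to fix $n(K)$ setwise'' should be dropped---it is neither true nor needed.
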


{More generally, for any relatively prime $p,q\in\mathbb{N}$, $(p,q)$-filling on the $K'$-cusp yields a knot complement in the lens space $L(p,q)$. The below proposition uses this fact to describe fillings of the $K'$-cusp that are \textit{covered} by knot complements. The \textit{orbi-lens space} mentioned below is as defined in \cite[\S 3]{BBoCWaGT}. As defined in that reference, the homology class of a curve $[K]$ is \emph{primitive} in an orbi-lens space $\orb$ if $[K]$ is a generator of $H_1(\orb,\Z).$ A knot $K$ is \emph{primitive} in $\orb$ if $[K]$ is primitive. } 
 Given Dehn fillings {of both cusps of} $\Sth \setminus L$ yielding an orbifold $\orb$, we have that $\orb \cong \Sth \setminus n(L) \cup( (D^2 \times S^1) \cup  (D^2 \times S^1))$. Removing the cores of these two solid tori, results in a link complement in  $\orb$, $\orb \setminus \bar{L}$ which is homeomorphic to $\Sth \setminus L$. Here we say $L$ and $\bar{L}$ are \emph{surgery dual}. %or just \emph{dual}.

{
\begin{prop}\label{prop:linking_number}
Let $L = K\sqcup K'$ be a two component link in $\Sth$ such that $K'$ is unknotted, {and give both components the standard homological framing}. Let $\ell$ be the linking number between $K$ and $K'$. 
%Furthermore, assume that $K'$ carries the standard homological framing, for any $(p,q)\in\mathbb{Z}^2$ (including where $p$ and $q$ are not relatively prime). 
Denote by $\orbQ$ the orbifold obtained by $(p,q)$ filling along $K'$ and $\orb$ the orbi-lens space obtained by $(1,0)$ filling along $K$ and $(p,q)$ filling along $K'$. Then the following are equivalent:
\begin{enumerate}[label=(\roman{*}), ref=(\roman{*})]
\item \label{prop:linking_number_i} 
%When considered as an embedded knot in $\orb$, $K$ is primitive,
{The surgery dual $\bar{K}$ of $K$ in $\orb$ is primitive}.  
\item \label{prop:linking_number_ii} 
$gcd(p,\ell)=1$.
\item \label{prop:linking_number_iii}
 $\orbQ$ has a $p$-fold cover by a knot complement in $S^3$. 
\end{enumerate}
\end{prop}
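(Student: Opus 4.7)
The plan is to prove the implications (i) $\iff$ (ii), (ii) $\Rightarrow$ (iii), and (iii) $\Rightarrow$ (ii) in sequence.

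For (i) $\iff$ (ii), I would compute $H_1(\orb)$ and the class $[\bar K]$ directly. In $H_1(\Sth\setminus L) = \Z\mu_K \oplus \Z\mu_{K'}$, the linking-number relations give $[\lambda_K] = \ell\,\mu_{K'}$ and $[\lambda_{K'}] = \ell\,\mu_K$. The $(1,0)$-filling on $K$ imposes $\mu_K = 0$, and the $(p,q)$-orbifold filling on $K'$ contributes the relation $p\,\mu_{K'} + q\,\lambda_{K'} = 0$ (coming from $\gcd(p,q)$ times the meridian of the filling orbi-solid torus vanishing in orbifold homology), which collapses to $p\,\mu_{K'} = 0$ once $\mu_K = 0$. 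Hence $H_1(\orb) \cong \Z/p$ generated by $\mu_{K'}$, and the surgery dual $\bar K$, being the longitude core of the $K$-cusp filling, has class $[\lambda_K] = \ell\,\mu_{K'} = \ell \pmod p$; this generates $\Z/p$ precisely when $\gcd(\ell, p) = 1$.

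For (ii) $\Rightarrow$ (iii), I would pass to the orbifold universal cover. Since $\orb$ is an orbi-lens space with $\pi_1^{orb}(\orb) = \Z/p$, its universal cover is $\Sth$ and the covering map $\Sth \to \orb$ has degree $p$. The number of components of the preimage of $\bar K$ equals $p$ divided by the order of $[\bar K]$ in $\Z/p$, which is $1$ precisely when $[\bar K]$ generates $\Z/p$. Under $\gcd(\ell, p) = 1$ the preimage is therefore a connected knot $\tilde K \subset \Sth$, and restricting the universal cover over $\orbQ = \orb \setminus \bar K$ yields the desired degree-$p$ cover $\Sth \setminus \tilde K \to \orbQ$ from a knot complement.

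For (iii) $\Rightarrow$ (ii), I would extend a hypothetical degree-$p$ cover $\tilde Q = \Sth \setminus K'' \to \orbQ$ to closed orbifolds. Filling the cusp of $\tilde Q$ along $\mu_{K''}$ gives $\Sth$, and by \Cref{lem:extend} this lifts to a degree-$p$ cover $\Sth \to \bar{\orbQ}$, where $\bar{\orbQ}$ is the filling of $\orbQ$ at the $K$-cusp along the image slope $\phi(\mu_{K''}) = a\,\mu_K + b\,\lambda_K$. Hence $\bar{\orbQ}$ is a closed spherical orbifold with $|\pi_1^{orb}(\bar{\orbQ})| = p$, and $H_1(\bar{\orbQ}) \cong \Z^2/\langle(a, b\ell),(q\ell, p)\rangle$ has order $|ap - bq\ell^2|$ dividing $p$. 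The fact that $\lambda_{K''}$ is null-homologous in $\tilde Q$ forces $\phi(\lambda_{K''})$ to vanish in $H_1(\orbQ) = \Z^2/\langle (q\ell, p)\rangle$, which combined with the index-$p$ condition on $\phi(H_1(T_{K''})) \subset H_1(T_K)$ yields arithmetic constraints tying $(a,b)$ to $p$ and $\ell$.

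The main obstacle is parlaying these constraints into $\gcd(\ell, p) = 1$. The plan is to show $\bar{\orbQ}$ must coincide with $\orb$ by restricting the possibilities for the spherical extension: because $\bar{\orbQ}$ is obtained by filling both components of $L$, its orbifold singular locus consists of at most two circles (the filling cores of orders $\gcd(a,b)$ and $\gcd(p,q)$, when nontrivial), and this should force $\pi_1^{orb}(\bar{\orbQ})$ to be cyclic via the classification of finite subgroups of $SO(4)$ -- ruling out, for example, Klein four-group quotients of $\Sth$ by noting their faithful actions produce exactly two rotation-axis circles whose fixed isotropy orders and resulting $H_1 \cong (\Z/2)^2$ cannot match the form of $H_1(\bar{\orbQ})$ forced by the fillings. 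Once $\bar{\orbQ}$ is identified with $\orb$, the cover $\Sth \to \orb$ is the universal cover of an orbi-lens space, $K''$ is the connected preimage of $\bar K$, and $[\bar K]$ must be primitive, giving $\gcd(\ell, p) = 1$ by the computation of the first paragraph.
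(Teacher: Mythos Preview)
Your computations for (i)~$\Leftrightarrow$~(ii) and (ii)~$\Rightarrow$~(iii) are correct and align with the paper's approach; you make the homology computation more explicit than the paper does, which is fine.

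The genuine gap is in (iii)~$\Rightarrow$~(ii). You correctly extend the cover to a closed cover $\Sth\to\bar\orbQ$ via \Cref{lem:extend}, but you are then left needing to identify $\bar\orbQ$ with $\orb$, or at least to extract $\gcd(p,\ell)=1$ from the structure of $\bar\orbQ$. Your proposed route---forcing $\pi_1^{orb}(\bar\orbQ)$ to be cyclic via the classification of finite subgroups of $SO(4)$ and a case analysis on possible singular loci---is, as you acknowledge, incomplete. Even granting cyclicity, the filling slope $\phi(\mu_{K''})$ on the $K$-cusp need not be $(1,0)$, so $\bar\orbQ$ need not equal $\orb$; you would still have to relate primitivity of the image of $K''$ in $\bar\orbQ$ back to the linking number $\ell$, and nothing in your sketch does this.

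The paper sidesteps all of this with one short input, cited from the proof of \cite[Prop.~9.1]{NeumReid}: any finite cover of a torus-cusped orbifold $\orbQ$ by a knot complement in $\Sth$ is automatically \emph{regular and cyclic}. The point is that since both spaces have a single torus cusp, the peripheral index equals the global index, so $\Gamma'=P'\Gamma$; the meridian $\mu$ lies in the abelian group $P'$ and hence acts trivially on the coset space $\Gamma'/\Gamma=P'\Gamma/\Gamma$, so $\mu$ lies in the normal core of $\Gamma$; but $\mu$ normally generates $\Gamma$, forcing the normal core to be all of $\Gamma$. With regularity in hand, the paper simply observes that if $\bar K$ were not primitive in $\orb$ then the $p$-fold cyclic cover of $\orbQ$ would have $d>1$ cusps, contradicting that it is a knot complement. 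This replaces your $SO(4)$ case analysis with a two-line group-theoretic fact about knot groups; you should use it.
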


}

{
\begin{proof}
For this proof, denote by $\bar{K}$ and $\bar{K}'$ the dual knots to $K$ and $K'$ in $\orb\cong L(p,q)$. {Since it is unknotted}, $K'$ bounds a disk $D$ in $\Sth$. We say $D_\ell$ is the punctured disk corresponding to $D$ in $\Sth \setminus L$, and $\bar{D}_\ell$ is the corresponding disk in $\orb \setminus \bar{K} \cup \bar{K'}$.

{\bf (i) $\Rightarrow$ (ii)} We observe that $H_1(\orb,\Z) = \Z/p\Z$.  {Since the linking number of $K$ with $K'$ is $\ell$, the signed intersection number of $K$ with the disk bounded by $K'$ is $\ell$.}
Since $[\bar{K}]$ is a generator of $H_1(\orb,\Z)$, we must have $gcd(p,\ell)=1$.

{\bf (ii) $\Rightarrow$ (iii)} {Since $gcd(p,l)=1$}, $\bar{K}$ is homotopic to an $\ell$ strand braid in $\orb \setminus \bar{K}'$. If we pick a base point on $\bar{K}$ and follow the intersections between $n(\bar{K})$ and $\bar{D}_\ell$ in $\orb \setminus \bar{K} \cup \bar{K}'$,  the braid gives a permutation on the curves $n(\bar{K}) \cap \bar{D}_\ell$. Since $K$ and $\bar{K}$ are connected, this is an $\ell$-cycle $\sigma$. 
{In the $p$-fold cyclic cover of $\orb\setminus \bar{K}'$, this permutation lifts to a permutation $\sigma^p$, which is again an $\ell$-cycle since $gcd(p,\ell)=1$. It follows that $\bar{K}$ lifts to a knot in the $p$-fold cyclic cover of $\orb\setminus \bar{K}'$, and hence the cusp corresponding to $\bar{K}$ in $\orbQ$ lifts to a single cusp in its $p$-fold cyclic cover.}
Finally, we point out the that local homotopy relations lift to an invariant homotopy in the cover, in both cases, the connectivity of $K$ and $\bar{K}$ remains unchanged. 

 {\bf (iii) $\Rightarrow$ (i)} In this case, we assume $\orbQ$ has a $p$-fold cover by $\Sth \setminus K_p$ for some $K_p$. Since $\orbQ$ has a torus cusp and $\Sth \setminus K_p$ is a knot complement, this cover is both regular and cyclic (see proof of \cite[Proposition 9.1]{NeumReid}).
{Suppose that $\bar{K}$ is not primitive. Then by a similar argument to that given above, if the order of $[\bar{K}]$ is $p/d$, then $\bar{K}$ will lift to $d$ disconnected components in the $p$-fold cyclic cover of $\orb\setminus \bar{K}'$ {(corresponding to the cover $\Sth \rightarrow \orb$)}, and hence the $p$-fold cyclic cover of $\orbQ$ will be a link complement with $d>1$ components.}
\end{proof}
}

{
\begin{remark}
We now make two further observations about the previous proposition.
First, if $gcd(p,\ell)=1$ and $\orbQ$ admits a second non-trivial orbi-lens space filling, then this filling also 
corresponds to a knot complement covering $\orbQ$ by \cite[Proposition 4.13]{BBoCWaGT}. Second, in the case that 
$K$ is not primitive, we have that $\orbQ$ is covered by an $\Sth$ link complement. The number of 
components of the link is precisely $gcd(p,\ell)$. 
\end{remark}
}

In the second and third case, if the relevant linking number $\ell$ is not relatively prime to $p$, the preimage of $K$ under the universal cover is not connected, so the relevant filling is a (multi-cusped) link complement as opposed to a knot complement. In the special setting that $\ell = 0$ we make the following observation.

\begin{remark}\label{rem:zero_linking}
If $\orbQ$ is the result of $(p,q)$-filling on one unknotted component of a two component link $L=K \cup K'$ such that the linking number $\ell$ of $K$ and $K'$ is $0$, then $\orbQ$ is covered by a knot complement in $\Sth$ if and only if $p=\pm 1$ (and the cover is the identity map).
\end{remark}

The below example demonstrates that there are links $L=K\sqcup K'$ satisfying the conditions of \Cref{earem}, for all but finitely many choices of linking number $l$.

\begin{example}\label{eminem} The ``magic manifold'' is the complement in $\Sth$ of the minimally twisted three-component chain link $L_3=6^3_1$, shown in \Cref{fig:magic}. It has the minimal known volume among all complete, three-cusped hyperbolic $3$-manifolds, roughly $5.33$. (Note that this is less than $6v_0$.) 

Each component of $L_3$ is unknotted, so for any $q\in\mathbb{Z}$, performing $(1,q)$ surgery along one of them (given the standard homological framing) yields a two-component link in $\Sth$ as in \Cref{rem:zero_linking}. The resulting link has two unknotted components with linking number $q+1$; in fact, $q=2$ yields $6^2_2$ (see \Cref{fig:magicfillings}.)  By the hyperbolic Dehn surgery theorem, all but finitely many of these has a hyperbolic complement, and each such has volume less than $6v_0$.\end{example}

\begin{figure}[h]
	\begin{subfigure}{.49\textwidth}
 		\centering
   		\includegraphics[scale=1]{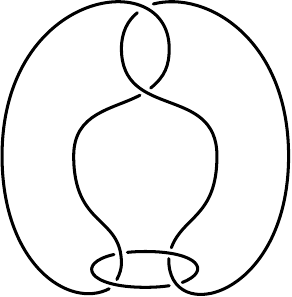}
   		\caption{}
   		\label{fig:magic}
	\end{subfigure}
	\begin{subfigure}{.49\textwidth}
 		\centering
   		\includegraphics[scale=1]{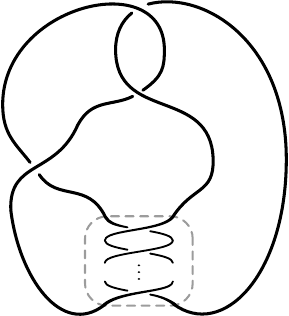}
   		\caption{}
   		\label{fig:magic_twisted}
	\end{subfigure}	
	\caption{Performing $(1,q)$ Dehn surgery on the bottom component of the `magic manifold', the framed link complement drawn in (a), results in the two component link with one clasp and one twist region consisting of $2q$ crossings (b).}
	\label{fig:magicfillings}
\end{figure}

%%%%%%%%%%%%%%%%
\subsection{Knot complements covering manifolds}\label{sub:mani}

In this subsection we prove \Cref{thm:volume_bound_gen}, eliminating (with one exception) the possibility that a knot complement with hidden symmetries can cover a manifold with volume at most $6v_0$. The following lemma is a key tool. It establishes an upper bound on volume for rigid cusped orbifolds not covered by the figure-eight knot complement. 

\begin{lem}\label{lem:ANR_vol_bound}[Adams, Neumann--Reid]
Let $\orbQ$ be a non-arithmetic $3$-orbifold with a rigid cusp. %$C$,

\begin{enumerate}
\item If $\orbQ$ has a $S^{2}(2,3,6)$ cusp, $vol(\orbQ) > \frac{v_0}{4}$,
\item If $\orbQ$ has a $S^{2}(3,3,3)$ cusp, $vol(\orbQ) > \frac{v_0}{2}$, and
\item If $\orbQ$ has a $S^{2}(2,4,4)$ cusp, $vol(\orbQ) \geq \frac{v_0}{2\sqrt{3}}$.
\end{enumerate}
\end{lem}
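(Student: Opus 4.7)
The plan is to invoke Adams' classification of small-volume cusped hyperbolic 3-orbifolds, together with Neumann--Reid's identification of the arithmetic rigid-cusped orbifolds, and read off the claimed bounds from the resulting enumeration. The core point is that in each of the three rigid-cusp cases, the smallest-volume orbifolds are all arithmetic and lie in one of two specific commensurability classes, so any non-arithmetic example must have volume at least as large as the smallest non-arithmetic entry on Adams' list.

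First, I would recall that arithmetic rigid-cusped orbifolds have invariant trace field $\Q(\sqrt{-3})$ (for $S^2(2,3,6)$ and $S^2(3,3,3)$ cusps) or $\Q(i)$ (for $S^2(2,4,4)$ cusps), and that there is a unique minimal orbifold in each of these commensurability classes, namely the $\mathrm{PGL}(2,\mathcal{O}_3)$-orbifold for $\Q(\sqrt{-3})$ and the $\mathrm{PGL}(2,\mathcal{O}_1)$-orbifold for $\Q(i)$, with volumes $v_0/12$ and $v_0/(4\sqrt{3})$ respectively. These are precisely the orbifolds realizing the absolute minimum volume among cusped orbifolds with each of the three rigid cusp types.

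Next, I would walk through each case of the lemma using Adams' enumeration of the next few entries of the cusped volume spectrum with the prescribed cusp type. For each rigid cusp type, the next entry after the minimum is again arithmetic (a small-index cover or sibling of the minimal orbifold), and the smallest non-arithmetic entry occurs at the stated threshold: $v_0/4$ for $S^2(2,3,6)$, $v_0/2$ for $S^2(3,3,3)$, and $v_0/(2\sqrt{3})$ for $S^2(2,4,4)$. Whether the inequality is strict or not in each case reflects whether the threshold value is itself attained by a non-arithmetic orbifold (achieved in the $S^2(2,4,4)$ case, hence the $\geq$, but not in the first two).

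The main obstacle is simply assembling the correct citations from Adams' volume-spectrum papers and verifying that the enumeration is complete up through the stated thresholds; no new geometric input is needed, since the volume bounds follow once one knows both the arithmetic commensurability classes realizing the small volumes and the completeness of Adams' initial list. Given the highly technical and computational nature of Adams' arguments, I expect the actual write-up to be a short proof consisting chiefly of citations to the relevant portions of \cite{NeumReid} and Adams' work, rather than any fresh calculation.
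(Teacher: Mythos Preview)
Your proposal is correct and matches the paper's own argument essentially exactly: the paper's proof is likewise a short paragraph of citations, invoking Adams' enumeration of the smallest-volume orbifolds with each rigid cusp type and Neumann--Reid's identification of those as arithmetic. The only detail the paper makes more explicit is the boundary case for the strict inequalities in (1) and (2), where Adams' discussion shows that an orbifold attaining the threshold volume would have the maximally dense horoball packing and hence be arithmetic.
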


\Cref{lem:ANR_vol_bound} follows from results of Adams \cite{Adams_small_vol} and of Neumann--Reid \cite{NR92b}. Specifically, \cite[Theorem 3.3]{Adams_small_vol} gives the 3 possible volumes of hyperbolic $3$-orbifolds that have a $S^{2}(2,3,6)$ cusp and volume less than $\frac{v_0}{4}$. The work preceding the statement of that theorem describes the orbifolds with these properties, which are shown to be arithmetic in \cite{NR92b}. For hyperbolic $3$-orbifolds that admit an $S^{2}(2,3,6)$ cusp and have volume exactly $\frac{v_0}{4}$, the discussion on page 5 of \cite{Adams_small_vol} shows that any such orbifold must admit the maximally dense horoball packing, and so, must be arithmetic. A similar analysis using Adams \cite{Adams_small_vol} and  Neumann--Reid \cite{NR92b} can be given for the $S^{2}(3,3,3)$ and $S^{2}(2,4,4)$ cases.

By combining the volume bounds in the above lemma with the minimum degree covering bounds from \cite{Hoffman_hidden} and \cite{Hoffman_rigid_cusps}, we are now able to prove \Cref{thm:volume_bound_gen}.

\begin{proof}[Proof of \Cref{thm:volume_bound_gen}]
For the proof of this theorem, we will consider possible coverings $\phi \colon \Sth \setminus K \to \orbQ$, where $\orbQ$ is a rigid cusped hyperbolic orbifold. To start, if $\Sth \setminus K$ is arithmetic, then it must be the figure-eight knot complement \cite{ReidFig8}, which admits hidden symmetries. Moving forward, we will assume that $\Sth \setminus K$ (and therefore $\orbQ$) is non-arithmetic. 

 We now consider the cusp type of the rigid cusped quotient and the minimum degree of a manifold cover for each type. 
Let $M$ be a manifold covered by $\Sth \setminus K$ (possibly trivially) and $\phi_2 \colon M \to \orbQ$ as an orbifold cover. By \cite[Lemma 5.5]{Hoffman_hidden}, $deg(\phi_2)$ is a multiple of $12$ if $\orbQ$ has an $S^2(3,3,3)$ cusp, and $deg(\phi_2)\ge 24$ if $\orbQ$ has a $S^2(2,4,4)$ cusp. By \cite[Theorem 1.2]{Hoffman_rigid_cusps} $deg(\phi_2)$ is a multiple of $24$ if $\orbQ$ has an $S^2(2,3,6)$ cusp. Appealing to  \Cref{lem:ANR_vol_bound} we conclude that $vol(\Sth\setminus K) \geq vol(M) > 6v_0$ for any non-arithmetic $\Sth \setminus K$. 
\end{proof}

%%%%%%%%%%%%%%%%
\subsection{Knot complements covering orbifolds produced by Dehn filling}\label{sub:orbi}

The ultimate goal of this subsection is to prove \Cref{earem}, restricting which Dehn fillings of one cusp of a two-component link complement can be covered by a knot complement with hidden symmetries. Given \Cref{thm:volume_bound_gen}, we are left to address the case of an \textit{orbifold} filling, where the filling slope does not represent a primitive element of the peripheral subgroup. We begin with some background on orbifolds, focusing on those of particular interest here: orientable, rigid-cusped orbifolds covered by knot complements.

By \cite[Corollary 4.11]{BoiBoCWa2} (see also \cite[Proposition 2.3]{Hoffman_hidden}) a rigid cusped orbifold $\orb$ that is covered by a knot complement has underlying space an open ball, and therefore is determined by its singular set, which is a properly embedded trivalent graph $\Lambda_{\orb}$ called the \emph{isotropy graph}. Each edge of $\Lambda_{\orb}$ is labelled by the order of the  torsion on the edge, which is the order of the cyclic \emph{isotropy group} at any (interior) point on that edge. At each vertex of $\Lambda_{\orb}$ there is an associated isotropy group which is either dihedral, or is one of $A_4$, $S_4$, or $A_5$. If an edge incident on such a vertex is labelled by $k$-torsion, then the isotropy group of the vertex must have a cyclic subgroup of order $k$. These restrictions on possible isotropy groups will be useful going forward.

The \textit{cusp} of such an orbifold $\orb$ is a neighborhood homeomorphic to $\mathbb{S}^2\times (0,1)$, complementary to a compact sub-ball, that intersects $\Lambda_{\orb}$ in a disjoint union of three intervals. These intervals may have torsion labels $(2,4,4)$, $(2,3,6)$, or $(3,3,3)$, reflecting that a horospherical cross-section is the quotient of $\mathbb{R}^2$ by the orientation-preserving subgroup of a Euclidean triangle group. We accordingly say that $\orb$ has an $S^2(2,4,4)$, $S^2(2,3,6)$, or $S^2(3,3,3)$ cusp, respectively.

$\orb$ is the quotient of $\mathbb{H}^3$ by the isometric action of its \emph{orbifold fundamental group}, which can be computed from the fundamental group $\pi_1(\orb \setminus \Lambda_{\orb})$ of the complement of $\Lambda_{\orb}$ in $\orb$. In particular, for an edge $e$ of $\Lambda_{\orb}$ let $k_e$ be the order of the isotropy group at $e$, and let $\mu_e\in \pi_1(\orb \setminus \Lambda_{\orb})$ be a meridian around $e$. Let $N$ be the subgroup of $\pi_1(\orb \setminus \Lambda_{\orb})$ normally generated by all elements $\mu_e^{k_e}$, where $e$ ranges over the edges of $\Lambda_{\orb}$. Then the orbifold fundamental group $\pi_1^{orb}(\orb)$ is the quotient of $\pi_1(\orb \setminus \Lambda_{\orb})$ by $N$. Of course when $\orb$ is hyperbolic, $\pi_1^{orb}(\orb)$ is isomorphic to the Kleinian group $\Gamma_{\orb}$ satisfying $\orb\cong \H^3/\Gamma_{\orb}$. We refer the reader to \cite{BMP05} for a more thorough introduction to the above topics.

We now describe a particularly useful homomorphism of $\pi_1^{orb}(\orb)$, called the \emph{cusp-killing homomorphism}. For each edge of $\Lambda_{\orb}$ which terminates on a cusp of $\orb$, let $P_e$ be the isotropy group for the edge, and let $P$ be the subgroup of $\pi_1^{orb}(\orb)$ normally generated by elements of these peripheral isotropy groups $P_e$. Define $f:\pi_1^{orb}(\orb)\to \pi_1^{orb}(\orb)/P$ to be the canonical homomorphism. This homomorphism can be understood as follows: starting with the isotropy graph $\Lambda_{\orb}$, first erase all peripheral edges (those terminating on a cusp). If two edges $e_1$ and $e_2$ come together at a vertex of $\Lambda_{\orb}$ at which the third edge has been erased, then join $e_1$ and $e_2$ into a single edge $e$, labelled with $k_e=\gcd(k_{e_1},k_{e_2})$-torsion, where $k_{e_i}$ is the torsion on $e_i$. If $k_e=1$, then erase the edge $e$ (since it is no longer singular). Continue until either all vertices are trivalent or the isotropy graph is empty. See \cite[Figure 3]{Hoffman_hidden} for a visual interpretation of this process. The usefulness of this homomorphism is suggested by \cite[Proposition 2.3]{Hoffman_hidden}, which states that for a rigid cusped orbifold $\orb$ covering a knot complement, the cusp-killing homomorphism is trivial.

In what follows, we will consider an orbifold $\orbQ$ that is both covered by a knot complement and is obtained as a $(p,q)$-filling of one component of $\Sth\setminus L$, where $L=K\sqcup K'$ is as in the statement of \Cref{earem}.  Here, $(p,q)$ do not need to be relatively prime, and so such fillings could produce an orbifold that is not a manifold. We are interested in determining when such orbifolds have hidden symmetries,  or equivalently, when such orbifolds cover a rigid cusped orbifold.  The recent work of Hoffman \cite{Hoffman_rigid_cusps} shows that we only need to consider when $\orbQ$ covers an orbifold with an $S^{2}(3,3,3)$ cusp. The following lemma starts the analysis of what possible $(p,q)$ fillings and covering maps could result in an orbifold $\orbQ$ with such properties.

\begin{lem}\label{lem:cusp_shapes_6_2_2_fillings} Suppose $L = K\sqcup K'$ is a two-component link in $\Sth$ with $K'$ unknotted, such that $N = \Sth\setminus L$ is hyperbolic with volume at most $6v_0$. For any $(p,q)\in\mathbb{Z}^2$ with $\gcd(p,q)>1$ such that the orbifold $\orbQ$ obtained from $(p,q)$ filling on the $K'$-cusp of $N$ is covered by a knot complement and has a cover $\phi \colon\orbQ \to \orb$ such that $\orb$ has an $S^{2}(3,3,3)$ cusp: \begin{itemize}
	\item $\phi$ has degree 6;
	\item $\gcd(p,q)\in\{2,4\}$; and 
	\item there is a point $\tilde{v}$ on the singular set $\gamma$ of $\orbQ$ such that the isotropy group at $\phi(\tilde{v})$ is $A_4$ or $S_4$, $\phi$ has local degree six at $\tilde{v}$, and $\phi$ is unbranched near $\tilde{v}$ on $\gamma- \tilde{v}$.
\end{itemize}\end{lem}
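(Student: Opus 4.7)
The plan is to bound $\mathrm{deg}(\phi)$ from above via volume and cusp-covering considerations, and then to analyze how the singular axis $\gamma$ of $\orbQ$ sits inside the isotropy graph $\Lambda_{\orb}$ of $\orb$ to pin down $\mathrm{deg}(\phi)$ exactly and locate the asserted vertex.

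\textbf{Degree bound.} Hyperbolic Dehn filling strictly decreases volume \cite[Th.~6.5.6]{Th_notes}, so $\vol(\orbQ) < \vol(N) \leq 6v_0$. First I would reduce to the case that $\orbQ$ is non-arithmetic: were it arithmetic, then so would be the knot complement $M$ covering it, which by \cite{ReidFig8} forces $M$ to be the figure-eight complement, a case that can be treated by a direct check. In the non-arithmetic case \Cref{lem:ANR_vol_bound}(2) gives $\vol(\orb) > v_0/2$, so $\mathrm{deg}(\phi) = \vol(\orbQ)/\vol(\orb) < 12$. Since the only cusp of $\orbQ$ is the $K$-cusp (a torus), it must cover the unique $S^2(3,3,3)$ cusp of $\orb$ with degree exactly $\mathrm{deg}(\phi)$, and any torus cover of $S^2(3,3,3)$ has degree divisible by $3$. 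Thus $\mathrm{deg}(\phi) \in \{3, 6, 9\}$.

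\textbf{Local analysis along $\gamma$.} Write $d := \gcd(p,q) > 1$, so $\gamma$ is the singular set of $\orbQ$ with uniform $\mathbb{Z}/d$ isotropy. Since orbifold covers preserve isotropies as subgroups, $\phi(\gamma) \subseteq \Lambda_{\orb}$. For a vertex $v \in \Lambda_{\orb}$ with isotropy $G \in \{D_n, A_4, S_4, A_5\}$, each preimage $x \in \phi^{-1}(v)$ has isotropy $\mathbb{Z}/d$ if $x \in \gamma$ and trivial otherwise. Summing local degrees of $\phi$ over $\phi^{-1}(v)$ yields
\[ \left(\tfrac{n_v}{d} + m_v\right)|G| \;=\; \mathrm{deg}(\phi), \]
where $n_v$ counts preimages on $\gamma$ and $m_v$ counts those off $\gamma$. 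Running through the valid vertex groups, the only way to achieve a local degree of $6$ at some $\tilde v \in \gamma$ (i.e.\ $|G|/d = 6$) is $(d, G) \in \{(2, A_4), (4, S_4)\}$: no other $G$ in the allowed list contains the requisite cyclic subgroup of index $6$. In either case the edges of $\Lambda_{\orb}$ at $v = \phi(\tilde v)$ carry labels $(2,3,3)$ or $(2,3,4)$, and the unique edge whose label is divisible by $d$ is the $d$-labelled one. Hence $\gamma$ must traverse $v$ along this edge, where the local degree is $d/d = 1$, giving the asserted unbranched behaviour on $\gamma - \tilde v$ near $\tilde v$.

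\textbf{Completing the proof and main obstacle.} It remains to rule out $\mathrm{deg}(\phi) \in \{3, 9\}$ and, in the $\mathrm{deg}(\phi) = 6$ case, to show that the special vertex $\tilde v$ must exist. When $\mathrm{deg}(\phi) = 3$, the displayed equation forces $G = D_3$, $d = 2$, $n_v = 1$, $m_v = 0$ at every vertex of $\phi(\gamma)$, with no vertex of $\Lambda_{\orb}$ outside $\phi(\gamma)$; a similar rigidity pins down the possible configurations in the $\mathrm{deg}(\phi) = 9$ case. These configurations should be inconsistent with both the triviality of the cusp-killing homomorphism on $\orb$ (which holds since $\orb$ is covered by a knot complement; cf.~\cite[Proposition 2.3]{Hoffman_hidden}) and the fact that any manifold cover of an $S^2(3,3,3)$-cusped orbifold has degree divisible by $12$ \cite[Lemma 5.5]{Hoffman_hidden}. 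The hardest step, which I expect to be the main obstacle, is this combinatorial-topological argument: it requires assembling the local isotropy constraints into a global picture on $\Lambda_{\orb}$ that is simultaneously hyperbolic, covered by a knot complement, and compatible with the given value of $\mathrm{deg}(\phi)$. Once this is accomplished, the existence of $\tilde v$ on $\gamma$ in the $\mathrm{deg}(\phi) = 6$ case follows similarly, since a configuration with only $D_3$ vertices on $\phi(\gamma)$ is ruled out by the same global obstruction.
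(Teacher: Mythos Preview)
Your degree bound and local-degree bookkeeping are both correct, and the ingredients you list---volume comparison, the divisibility $3\mid\deg(\phi)$, the index formula $(n_v/d+m_v)|G|=\deg(\phi)$, and the triviality of the cusp-killing homomorphism---are exactly the ones the paper uses. The difference is in the order of the argument, and this is what makes your anticipated ``main obstacle'' appear.

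The paper invokes cusp-killing \emph{first}, not last. Since $\orb$ has an $S^2(3,3,3)$ cusp, the three peripheral edges of $\Lambda_{\orb}$ are all labeled $3$ and each must terminate at some finite vertex. If every such terminal vertex had dihedral isotropy $D_3$, its edge labels would be $(2,2,3)$; erasing the peripheral $3$-edge and joining the two $2$-edges would leave non-trivial $2$-torsion after cusp-killing, contradicting \cite[Proposition~2.3]{Hoffman_hidden}. Hence there is a vertex $v$ on a peripheral edge with isotropy $G\in\{A_4,S_4,A_5\}$. Now the degree bound $\deg(\phi)\leq 9<|G|$ forces every preimage of $v$ to lie on $\gamma$ (a non-singular preimage would contribute local degree $|G|\geq 12$), so $v\in\phi(\gamma)$ automatically and each $\tilde v\in\phi^{-1}(v)$ has local degree $|G|/d$. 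A single case check on the indices of cyclic subgroups of $A_4$, $S_4$, $A_5$ then finishes everything at once: $A_5$ is impossible since its cyclic subgroups have index $\geq 12$; for $S_4$ the only index $\leq 9$ compatible with $\deg(\phi)\in\{3,6,9\}$ is $6$ (forcing $d=4$, $\deg(\phi)=6$, unique preimage); for $A_4$ the only such index is $6$ (forcing $d=2$, $\deg(\phi)=6$, unique preimage). The unbranched claim then follows exactly as you argue, from maximality of $\mathbb{Z}/d$ in $G$.

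So there is no need to assemble a global picture of $\Lambda_{\orb}$ or to separately rule out $\deg(\phi)\in\{3,9\}$ by exhausting dihedral configurations: the cusp-killing argument hands you one distinguished vertex $v$ with large isotropy, and the index analysis at that single vertex is already decisive. Your route would also succeed (the case analysis for $\deg(\phi)=3$ and $9$ does pin down $d=2$ and dihedral vertex groups, and cusp-killing then gives the contradiction), but it requires substantially more casework than the paper's reorganization.
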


\begin{proof} If $\orbQ$ is arithmetic, then it is the unique torus-cusped orbifold non-trivially covered by the figure-eight knot complement  (appealing to the fact that the figure-eight knot complement is the only arithmetic knot complement \cite{ReidFig8}, and a computation of its symmetry group). In this very specific case, $gcd(p,q)=2$, $\orbQ$ is the quotient of the full group of isometries acting freely on the cusp, and $deg(\phi)=6$.

We will assume for the rest of proof that $\orbQ$ is non-arithmetic, so $\orb$ is as well. Therefore by \Cref{lem:ANR_vol_bound}, $\orb$ has volume at least $v_0/2$. On the other hand, $\orbQ$ has volume less than $6v_0$, being obtained from $N$, with $\vol(N)\leq 6v_0$, by hyperbolic Dehn filling. It follows that $\phi$ has degree strictly less than $12$. Since $\orbQ_3$ has 3-torsion on the cusp, the $deg(\phi)=3n$ for some $n \geq 1, n\in\ZZ$. Hence $\deg(\phi)\in \{3,6,9\}$.

Since $\orb$ has an $S^2(3,3,3)$ cusp, its isotopy graph has three ends of edges labeled $3$ running out of this cusp. We will call an edge \textit{peripheral} if it has such an end. The isotropy graph has at least one vertex that belongs to a peripheral edge, with the possible isotropy groups at such a vertex $v$ being dihedral of order six, $A_4$, $S_4$, or $A_5$. If the isotropy group of $v$ is dihedral then there are two endpoints of non-peripheral edges, each labeled $2$, at $v$. Under the cusp-killing homomorphism, the peripheral edge ending at $v$ would be removed and the remaining two would be joined to form a single edge, still labeled $2$. It follows that if all endpoints of peripheral edges had dihedral isotropy groups then the cusp killing homomorphism would have non-trivial image, contradicting that $\orb$ is covered by a knot complement. Thus at least one of these isotropy groups is one of $A_4$, $S_4$, or $A_5$. 

The degree of the orbifold cover $\phi\co \orbQ\to\orb$ is the sum, for any $v\in\orb$, of the local degrees at points $\phi^{-1}(v)$. In turn, the local degree at any $\tilde{v}\in\phi^{-1}(v)$ equals the index $[G:H]$, where $G$ is the isotropy group of $v$ and $H$ is the isotropy group of $\tilde{v}$, naturally a subgroup of $G$. Here for a point not on the isotropy graph, we take the isotropy group to be trivial. For any vertex $v$ with isotropy group equal to $A_4$, $S_4$ or $A_5$, this implies that every $\tilde{v}\in\phi^{-1}(v)$ lies in the isotropy graph of $\orbQ$, since the isotropy group of $v$ has order greater than the maximum possible degree $9$ of $\phi$.

The isotropy graph of $\orbQ$ is a closed geodesic, and the isotropy group at every point of it is cyclic of order $d = \gcd(p,q)$. As every cyclic subgroup of $A_5$ has index at least $12$, greater than any possible degree of $\phi$, it follows that $A_5$ is not an isotropy group of $\orb$. Taking $v$ to be a vertex of $\orb$ with isotropy group $A_4$ or $S_4$, and $\tilde{v}\in\phi^{-1}(v)$, necessarily in the isotropy graph of $\orbQ$, we further find that $d\in\{2,3,4\}$: this is the set of orders of cyclic subgroups of $A_4$ and $S_4$.

Finishing off the proof entails eliminating the possibilities that $\phi$ has degree $3$ or $9$ or that $d =3$, as well as verifying the final assertion about the behavior of $\phi$ near $\tilde{v}\in\phi^{-1}(v)$, for $v$ with isotropy group $G$ isomorphic to $A_4$ or $S_4$. These all follow by carefully considering the individual possibilities for $G$. The cyclic subgroups of $S_4$ have indices $6$, $8$ and $12$ only, so if $G \cong S_4$, the only possibility compatible with existing restrictions on the degree and local degree of $\phi$ is that $d = 4$, $\deg(\phi)=6$, and that this is also its local degree at the unique $\tilde{v}\in\phi^{-1}(v)$. Since $d=4$ the subgroup $H$ of $G$ corresponding to the isotropy group of $\tilde{v}$ is maximal cyclic, and it follows that $\phi$ is unbranched at  $\gamma-\tilde{v}$ is unbranched near $\tilde{v}$. Similarly, if $G\cong A_4$ then we must have $d=2$, $\deg(\phi)=6$, a unique $\tilde{v}\in\phi^{-1}(v)$, and no branching around points of $\gamma$ near but not at $\tilde{v}$.\end{proof}

We now proceed to the proof of \Cref{earem}, whose statement we first recall.

\begin{earemtheorem}\TheoremEarem\end{earemtheorem}

\begin{proof} The Theorem's second assertion follows directly from \Cref{prop:linking_number}, and its third from \Cref{lem:cusp_shapes_6_2_2_fillings}. (For the latter, recall that manifold fillings are those with $\gcd(p,q)=1$, and the Lemma itself pertains to orbifold fillings.) It remains to prove the first assertion, so suppose the $(p,q)$ filling slope has normalized length greater than $7.5832$, and this filling is covered by a hyperbolic knot complement with hidden symmetries.

We now appeal to Hodgson and Kerckhoff's detailed description of geodesic tubes (\cite{HK2008shape}, see also \cite[Theorem 2.7 and \S3]{haraway2019practical} for an application of Hodgson-Kerckhoff similar to the one here). Let $\gamma$ be the core of the surgery solid torus. By \cite[Corollary 5.13]{HK2008shape}, $\gamma$ is isotopic to a geodesic of length $\ell$  with $\ell \leq 0.156012$. We will abuse notation and consider $\gamma$ to be this geodesic. Theorem 5.7 of \cite{HK2008shape} further gives us that the (maximal) tube around $\gamma$ has radius $R_0$ with $R_0 > \arctanh(\frac{1}{\sqrt{3}}) \approx   0.6585$. 

First suppose that $\gcd(p,q)=1$, ie.~that the result of $(p,q)$-Dehn filling is a manifold $M$. $M$ has volume less than $6v_0$, by hypothesis and the hyperbolic Dehn filling theorem, so by \Cref{thm:volume_bound_gen} it must be the figure-eight knot complement. In particular, it is arithmetic, so by \cite[Corollary 4.7]{NeumReid} its shortest geodesic has length at least $.43127...$. But this contradicts the length bound above for $\gamma$.

Now suppose that $\gcd(p,q)=d >1$, so the result of $(p,q)$-Dehn filling is a one-cusped orbifold $\orbQ$ with a cone angle $2\pi/d$ around $\gamma$. If $\orbQ$ is covered by a knot complement with hidden symmetries then appealing to \cite[Theorems 1.1 and 1.2]{Hoffman_rigid_cusps}, it has an orbifold cover $\phi\co\orbQ\to \orb$ where $\orb$ has an $S^2(3,3,3)$ cusp. We now apply  \Cref{lem:cusp_shapes_6_2_2_fillings}, which implies that $d \in \{ 2,4\}$. %In particular 
It further gives  
a point $\tilde{v} \in\gamma$ such that $v=\phi(\tilde{v})$ is a vertex of the singular set of $\orb$ with isotropy group $A_4$ or $S_4$, $\phi$ has local degree $6$ at $\tilde{v}$, and $\phi$ is unbranched near $\tilde{v}$ on $\gamma-\tilde{v}$.

Because $\phi(\gamma)$ contains an edge of the isotropy graph of $\orb$ that terminates at $v$, $\phi$ is two-to-one on $\gamma$ near $\tilde{v}$. The image of $\gamma$ thus has length at most $\ell/2$. As $\phi$ has local degree six at $\tilde{v}$ but is unbranched on $\gamma-\tilde{v}$ near $\tilde{v}$, $\phi^{-1}(\phi(\gamma))$ contains geodesics other than $\gamma$ that meet at $\tilde{v}$. The total length of  $\phi^{-1}(\phi(\gamma))$ is at most $6(\ell/2)<.468036$. Given the lower bound on the tube radius $R_0$, it follows that none of the geodesics in $\phi^{-1}(\phi(\gamma))$ that meet at $\tilde{v}$ exit the tube around $\gamma$. But $\gamma$ is the unique geodesic entirely contained in this tube, contradicting that $\phi^{-1}(\phi(\gamma))$ contains a distinct geodesic meeting $\gamma$ at $\tilde{v}$. 
\end{proof}

An immediate consequence of \Cref{earem} is that for a fixed manifold there is a bounded number of slopes with those properties. Following Hodgson and Kerckhoff's universal bounds \cite[Corollary 1.4]{HK2005universal} which relies on a clever argument of Agol \cite[Lemma 8.2]{agol2000bounds}, we can give a concrete upper bound after observing that $7.5832^2 < 57.51 < 59$, so there are at most $60$ primitive slopes with normalized length less than the desired bound. Also, if $gcd(p,q)=2$, we can count slopes of the form $(p/2,q/2)$ which would be primitive and have length at most $7.5832/2$. Since  $(7.5832/2)^2 < 14.38 < 17$, there at at most $18$ of these slopes. Finally, if $gcd(p,q)=4$, there are at most $6$ slopes with normalized length less than the bound as $(7.5832/4)^2 < 3.6 < 5$. All told, we have at most $84$ possible slopes to check. We summarize this observation via the following:

\begin{cor}\label{cor:agol_bound}
Suppose $L = K \cup K'$ is a two-component link in $\Sth$, with $K'$ unknotted,
such that $N \cong \Sth \setminus L$ is hyperbolic with volume at most $6v_0$. Then there are at most %60+18+6
$84$ fillings that could result in an orbifold covered by a knot complement admitting hidden symmetries.  
\end{cor}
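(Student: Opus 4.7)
The plan is to read this off directly from \Cref{earem} combined with a standard counting lemma for slopes of bounded normalized length, due to Agol and refined by Hodgson–Kerckhoff. First I would invoke \Cref{earem}: under the stated hypotheses, any slope $\alpha = (p,q)$ on the $K'$-cusp whose filling is covered by a hyperbolic knot complement admitting hidden symmetries must have normalized length at most $7.5832$ and $\gcd(p,q) \in \{1,2,4\}$. So the task reduces to counting slopes satisfying both constraints.

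Next I would apply the counting principle from \cite[Lemma 8.2]{agol2000bounds} and \cite[Corollary 1.4]{HK2005universal}: if a horospherical cross-section is normalized to have area $1$, then slopes correspond to lattice points of the cusp lattice and normalized length becomes Euclidean length; the number of \emph{primitive} lattice points in a Euclidean disk of radius $r$ centered at the origin is bounded above in terms of $\lceil r^2 \rceil$, and the specific arithmetic form used (as implicit in \cite{HK2005universal}) gives at most $m+1$ such primitive slopes when $r^2 < m$. This is the key input I would treat as a black box.

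I would then split into the three allowed gcd cases. For $\gcd(p,q)=1$, the slope is itself primitive, and since $7.5832^2 < 57.51 < 59$, we obtain at most $60$ primitive slopes of normalized length at most $7.5832$. For $\gcd(p,q)=2$, we write $(p,q) = 2(p',q')$ with $(p',q')$ primitive, and note that the normalized length scales linearly, so $(p',q')$ is primitive of normalized length at most $7.5832/2$; since $(7.5832/2)^2 < 14.38 < 17$, there are at most $18$ such. For $\gcd(p,q)=4$, the analogous reduction gives $(7.5832/4)^2 < 3.6 < 5$, hence at most $6$ such slopes.

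Summing: at most $60 + 18 + 6 = 84$ slopes can satisfy the constraints of \Cref{earem}, and each filling of the $K'$-cusp that is covered by a knot complement with hidden symmetries must use one of them. The main (minor) subtlety is simply to verify that the normalized-length notion used in \Cref{earem} is compatible with the one used in the Agol/Hodgson–Kerckhoff counting lemma, so that the same numerical threshold $7.5832$ may be used in both places; this is immediate from the definition in \Cref{whatsadiorama?} and the remark following \Cref{prop:edn}. No further effort is needed.
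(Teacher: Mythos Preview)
Your proposal is correct and follows essentially the same argument as the paper: invoke \Cref{earem} for the constraints $\gcd(p,q)\in\{1,2,4\}$ and normalized length at most $7.5832$, then apply the Agol/Hodgson--Kerckhoff slope-counting bound in each of the three gcd cases using the exact same numerical estimates ($7.5832^2<59$, $(7.5832/2)^2<17$, $(7.5832/4)^2<5$) to get $60+18+6=84$.
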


Of course, the bound of $84$ can be significantly reduced if the linking number between $K$ and $K'$ is small.

\begin{remark}\label{sub:covers_of_man_fillings}%bad label!
It is worth noting that comparing Corollary 5.13 of \cite{HK2008shape} with Corollary 4.7 of \cite{NeumReid} as in the proof above implies that the filling along any slope that has normalized length at least $7.5832$ is non-arithmetic. This reduces the problem of checking which fillings are arithmetic to a finite one that can be approached using rigorous computational methods.\end{remark}

%%%%%%%%%%%%%%%%%%
\subsection{Questions about orbifold fillings and concrete volume bounds.}\label{sub:orbifold_fillings}

Although \Cref{thm:volume_bound_gen} places a lower bound on the volume of non-arithmetic knot complements which admit hidden symmetries, it would be nice to place a lower bound on the volume of any torus cusped orbifold covered by a knot complement that can cover a rigid cusp. Making such a statement truly effective involves enumerating a (minimal) finite list of manifolds with the property that they are a complete set of ancestors for all hyperbolic orbifolds with volume less  $6v_0$. This problem is open; we refer the reader to \cite[Problem 10.16, Remark 10.17]{GMM2010mom} for more background (the problem list appears in the arxiv version). However, we ask the same questions for orbifolds of lower volume, which seems like a natural first step. In fact, $(2,0)$ surgery on the magic manifold $\Sth \setminus 6^3_1$ appears to be one of the smallest volume orbifolds with two torus cusps and singular set an embedded knot. This surgery has volume $\frac{vol(\Sth \setminus 6^3_1)}{2} \approx 2.66674478345.$

\begin{question}
Determine a complete (infinite) list of orbifolds $\{\orbQ_i\}$ with the following properties: \begin{enumerate}[label=(\roman*)]
\item  $\orbQ_i$ has a single torus cusp,
\item the singular set of $\orbQ_i$ is a set of embedded circles,
\item  $vol(\orbQ_i) \leq \frac{vol(\Sth \setminus 6^3_1)}{2} \approx 2.66674478345.$
\end{enumerate} 
\end{question}

Finally, by \cite[Corollary 1.2]{GMM2009minimum} (and a verified volume computation in sage), we find that we can focus on the case where the singular set of these orbifolds is non-empty, since the set of 1-cusped orientable manifolds with volume less than or equal to  $\frac{vol(\Sth \setminus 6^3_1)}{2}$ is exactly \{m003, m004, m006, m007, m009, m010\}. Although the verified computation involves interval arithmetic, one can show that $vol(m009)=vol(m010)= \frac{vol(\Sth \setminus 6^3_1)}{2}$ by finding common covers of m009, m010 and the Magic Manifold.

A classification of low volume orbifolds with a rigid cusp and non-rigid cusp similar to the classification of low volume rigid cusps orbifolds in \cite{Adams_small_vol} or limit orbifolds as in \cite{Adams_limit} could be leveraged with  \Cref{main_theorem} to improve the bounds of \Cref{thm:volume_bound_gen}. Along these lines, it seems the following question is open:

\begin{question}
For each type of rigid cusp, what is the smallest volume orbifold with that cusp and at least one non-rigid cusp?  Alternatively, what is is the small volume limit orbifold for an infinite sequence of distinct manifolds $\{\orbQ_n\}$ such that each $\orbQ_n$ has a rigid cusp? 
\end{question}

A candidate for the minimizer is given in \Cref{Fig236mins}(left), which Damian Heard's orb \cite{heard2005orb} gives (experimental) volume $\frac{5}{6}v_0 \approx 0.845784672$. However, we observe that any filling of this orbifold will have non-peripheral $\Z/2\Z$ homology and hence be non-trivial under the cusp killing homomorphism (a similar argument is used in \cite[Lemma 3.8]{HMW19}). 
Therefore, a natural modification of this question is to find the minimal volume example of an orbifold which admits fillings  that are trivial under the cusp killing map. A candidate for such a minimal volume example is given on the right side of \Cref{Fig236mins}. According to orb, this orbifold has volume $\approx 1.15111$.  In order to fill with tangles or orbi-tangles (as defined in \Cref{whatsadiorama?}) such that the resulting orbifold is trivial under the cusp killing homomorphism, we must apply the following rules for filling the (2,2,2,2) cusp. For tangles, the northwest corner is not connected to the southeast corner. For orbi-tangles, the central torsion is odd and no two torsion connects the northwest corner with the southeast corner.

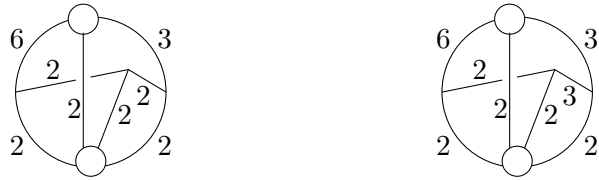
\begin{figure}
\begin{tikzpicture}
\draw (0,0) circle [radius=1];
\fill [color=white] (-.1,.97) circle [radius=0.2];
\draw (-.1,.97) circle [radius=0.2];
\draw (-1,0) -- (0.5,0.3) -- (1,0);
\fill [color=white] (-0.1,0.18) circle [radius=0.1];
\draw (-0.1,0.79) -- (-0.1,-0.8);
\draw (0.5,0.3) -- (0,-1);
\fill [color=white] (0,-.92) circle [radius=0.2];
\draw (0,-.92) circle [radius=0.2];
\node [above right] at (0.75,0.45) {$3$};
\node [above left] at (-0.75,0.45) {$6$};
\node [below left] at (-0.75,-0.45) {$2$};
\node [below right] at (0.75,-0.45) {$2$};
\node at (-0.22,-0.22) {$2$};
\node at (-0.5,0.3) {$2$};
\node at (0.45,-0.3) {$2$};
\node at (0.7,-0.05) {$2$};
\end{tikzpicture}
\hspace{3cm}
\begin{tikzpicture}
\draw (0,0) circle [radius=1];
\fill [color=white] (-.1,.97) circle [radius=0.2];
\draw (-.1,.97) circle [radius=0.2];
\draw (-1,0) -- (0.5,0.3) -- (1,0);
\fill [color=white] (-0.1,0.18) circle [radius=0.1];
\draw (-0.1,0.79) -- (-0.1,-0.8);
\draw (0.5,0.3) -- (0,-1);
\fill [color=white] (0,-.92) circle [radius=0.2];
\draw (0,-.92) circle [radius=0.2];
\node [above right] at (0.75,0.45) {$3$};
\node [above left] at (-0.75,0.45) {$6$};
\node [below left] at (-0.75,-0.45) {$2$};
\node [below right] at (0.75,-0.45) {$2$};
\node at (-0.22,-0.22) {$2$};
\node at (-0.5,0.3) {$2$};
\node at (0.45,-0.3) {$2$};
\node at (0.7,-0.05) {$3$};
\end{tikzpicture}
\caption{(left) A candidate for the minimum volume orbifold with a rigid and non-rigid cusp. (right) A candidate for the minimum volume orbifold with a rigid and non-rigid cusp which has fillings that are trivial under the cusp killing map.} 
\label{Fig236mins}
\end{figure}

\section{A nice example}\label{sec:sixtwotwo}

Given a two-component link $L = K\sqcup K'$, with $K'$ unknotted, that satisfies the hypotheses of \Cref{earem}, the Theorem's conclusion leaves one with two tasks to execute in order to completely classify the fillings of the $K'$-cusp of $\Sth\setminus L$ that are covered by a knot complement with hidden symmetries. First, enumerate the filling slopes on this cusp that satisfy \Cref{earem}'s conclusions, in particular those with length less than $7.5832$. For each slope $\alpha$ on the resulting list, the second task is to individually check the $\alpha$-filling of the $K'$-cusp for the existence of the cover in question.

In this section we will completely execute the tasks laid out above for the two-bridge link named $6^2_2$ in the Rolfsen tables, proving:

\begin{thmn}[\ref{sixtwotwo}]\SixTwoTwo\end{thmn}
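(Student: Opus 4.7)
The plan is to execute the two-step strategy laid out before the theorem's statement, combining \Cref{earem} with direct verification for each candidate filling. Recall from \Cref{eminem} that $6^2_2$ is the $(1,2)$-filling of one component of the magic manifold, so the two components of $6^2_2$ are unknotted with linking number $\ell = 3$, and $\vol(N) < 6v_0$. Since $6^2_2$ is a two-bridge link, there is an involution of $\Sth$ exchanging its components, so it suffices to consider fillings of a single cusp.

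First, I would enumerate the finite list of candidate slopes. Using rigorous computation (e.g., SnapPy with verified interval arithmetic), compute the cusp shape of the chosen cusp of $N$ relative to the standard homological framing. The candidates are then the slopes $(p,q)$ satisfying all three conclusions of \Cref{earem}: normalized length at most $7.5832$, $\gcd(p,3)=1$, and $\gcd(p,q)\in\{1,2,4\}$. By \Cref{cor:agol_bound}, there are at most $84$ slopes of normalized length below this threshold; the linking-number constraint eliminates roughly a third of these, and the gcd constraint prunes the list further. The resulting list is short enough to handle case-by-case.

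Second, I would verify each candidate individually. For slopes with $\gcd(p,q)=1$, the filling is a manifold, and by \Cref{thm:volume_bound_gen} it is covered by a knot complement with hidden symmetries only if it is the figure-eight knot complement itself. A SnapPy identification check rules this out (no manifold filling of $6^2_2$ yields the figure-eight; the figure-eight appears only through an orbifold filling, as the theorem statement reflects). For slopes with $\gcd(p,q)\in\{2,4\}$, the filling is a genuine orbifold $\orbQ$, and by \Cref{lem:cusp_shapes_6_2_2_fillings}, $\orbQ$ is covered by a knot complement with hidden symmetries only if it admits a degree-$6$ cover to a rigid-cusped orbifold $\orb$ with an $S^2(3,3,3)$ cusp and a vertex of isotropy type $A_4$ or $S_4$. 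For each such orbifold candidate, one can test this condition by computing a presentation of $\pi_1^{orb}(\orbQ)$, enumerating its index-$6$ subgroups (using GAP or Magma), and checking the quotient orbifolds for the required cusp and vertex isotropy structure.

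The $(2,0)$-filling is handled by exhibiting the explicit two-fold cover by the figure-eight knot complement, equivalently by recognizing both orbifolds as quotients of $\mathbb{H}^3$ by commensurable subgroups of the Bianchi orbifold $\mathbb{H}^3/\mathrm{PSL}_2(\mathcal{O}_3)$ (using that the figure-eight is the unique arithmetic knot complement). The main obstacle is the orbifold-filling case: while the enumeration step and the manifold-filling check are essentially routine, verifying non-existence of the required degree-$6$ cover for each remaining orbifold demands either rigorous computation with orbifold fundamental groups or a structural argument exploiting the combinatorics of the singular locus in each case. An alternative, pursued in \Cref{defvar}, is to bypass this computation by working directly on the deformation variety of $\Sth\setminus 6^2_2$.
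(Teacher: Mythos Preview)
Your overall strategy matches the paper's first proof: apply \Cref{earem} to produce a short list of candidate slopes, then check each one individually, splitting into the manifold case (where \Cref{thm:volume_bound_gen} reduces the question to recognizing the figure-eight) and the orbifold case. The manifold-filling portion and the treatment of the $(2,0)$-filling are essentially identical to the paper's.

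There is, however, a genuine error in your handling of the orbifold fillings. You propose to ``enumerate the index-$6$ subgroups'' of $\pi_1^{orb}(\orbQ)$ and check the resulting quotient orbifolds. But index-$6$ subgroups of $\pi_1^{orb}(\orbQ)$ correspond to degree-$6$ \emph{covers} of $\orbQ$, not degree-$6$ \emph{quotients}. What \Cref{lem:cusp_shapes_6_2_2_fillings} requires is a cover $\phi\colon\orbQ\to\orb$ of degree $6$, i.e.\ a Kleinian group containing $\pi_1^{orb}(\orbQ)$ with index $6$. Enumerating such supergroups is not a standard low-index computation and is considerably harder; indeed, in the non-arithmetic case one would need to work inside the commensurator, which is not given to you \emph{a priori}.

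The paper sidesteps this entirely. For each orbifold filling $\orbQ$ in the list $\{(2,2),(2,4),(4,0),(4,2),(4,4),(8,2)\}$, it passes to the natural $2$- or $4$-fold cyclic manifold cover (filling along $(p/d,q)$ on the appropriate cyclic cover of $N$, where $d=\gcd(p,q)$). This yields a genuine manifold to which SnapPy's exact-arithmetic functionality applies, and one then verifies that the invariant trace field (equivalently, the cusp field) is not $\mathbb{Q}(i)$ or $\mathbb{Q}(\sqrt{-3})$, which rules out any cover to a rigid-cusped orbifold. This is both simpler and rigorous within the existing toolchain.
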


This is  the ``simplest new example'' in the sense that the Whitehead link $5^2_1$ is the only hyperbolic link with fewer crossings than $6^2_2$. The Whitehead link complement has been known since Neumann-Reid's work \cite[Section 6]{NeumReid} to have no fillings covered by a knot complement with hidden symmetries except one: the figure-eight knot complement. 

We will classify the relevant filling slopes for $\Sth\setminus 6^2_2$ using an explicit geometric triangulation of its cusp cross sections. We describe this triangulation, and the tetrahedral decomposition of the link complement that gives rise to it, in \Cref{triang} below. In giving the account there we also have an eye toward \Cref{defvar}, where we use the deformation variety of $\Sth\setminus 6^2_2$ to give an alternate proof of \Cref{sixtwotwo}. We give the current proof in \Cref{fillin} by applying rigorous computer-aided methods to the filling along each of the short list of slopes that satisfy all of \Cref{earem}'s criteria.

%%%%%%%%%%%%%%%%%%%%%%%%%%%%%%%%%
\subsection{Two triangulations of $\Sth\setminus 6_2^2$}\label{triang}

It is well known that $\Sth\setminus 6_2^2$ 

has a complete hyperbolic structure admitting a triangulation $\mathcal{T}_4$ by four regular ideal tetrahedra.  We obtain this triangulation from the six-tetrahedron triangulation $\mathcal{T}_6$ used in \cite{CheDeMonda}.  The tetrahedra of $\mathcal{T}_6$ are all isometric, with dihedral angles equal to $2\pi/3$ and $\pi/6$. 
That $\Sth\setminus 6_2^2$ has both triangulations is related to the fact that it is arithmetic and covers two different minimal orbifolds. These, called $Q_0$ and $Q_3$ in \cite{NR92b}, come from tilings of $\mathbb{H}^3$ by copies of the tetrahedra of $\mathcal{T}_4$ and $\mathcal{T}_6$, respectively. See \cite{NR92b} for further details.

A pair of 3-2 \textit{Pachner moves}, each of which

 replaces a union of three distinct tetrahedra sharing a degree 3 edge with a union of two glued along a face, transforms $\mathcal{T}_6$ to $\mathcal{T}_4$. We perform these along each of the two edges of $\mathcal{T}_6$, both with degree $3$, that are incident at both ends to the cusp $c_1$ of $\Sth\setminus 6_2^2$.

The triangulations of the cusps of $\Sth\setminus 6_2^2$ induced by $\mathcal{T}_4$ and $\mathcal{T}_6$ are depicted in \Cref{fig:cusp_triangs_1,fig:cusp_triangs_2}, respectively. On the left are those of $c_1$ (compare \Cref{fig:cusp_triangs_2} with \cite[Figure 5]{CheDeMonda}), and on the right are those of $c_2$.
For the cusp triangulations in \Cref{fig:cusp_triangs_2}, a prefered edge for each tetrahedron is marked by a dot, and the corresponding edge parameters are labelled following \cite[Section 3]{CheDeMonda}. The edge labelings of the right-hand cusp are obtained by applying 
a triangulation-preserving, cusp-exchanging involution, as described in \cite{CheDeMonda}, to the labels of the left-hand cusp.

\begin{figure}%[ht]

\includegraphics[scale=2.2]{./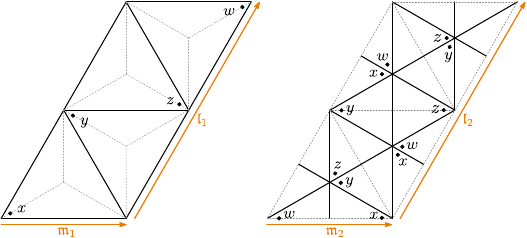}
\caption{ The triangulations of the two cusps of $S^3\setminus 6^2_2$ induced by $\mathcal{T}_4$ are indicated in solid black lines. In the discussion to follow, only the cusp on the right will be filled. The dots correspond to the edge with the preferred tetrahedral parameter. The dotted lines are edges of the cusp triangulations induced by $\mathcal{T}_6$ which are not edges of the $\mathcal{T}_4$ cusp triangulations. Compare to  \Cref{fig:cusp_triangs_2}.
}
\label{fig:cusp_triangs_1}
\end{figure}

\begin{figure}%[ht]
\includegraphics[scale=2.2]{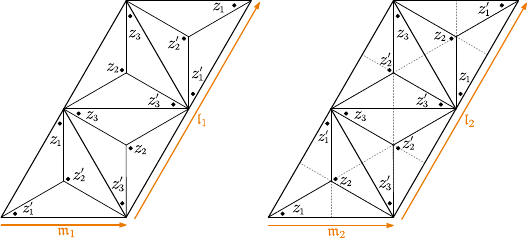}
\caption{ The triangulations of the two cusps of $S^3\setminus 6^2_2$ induced by  $\mathcal{T}_6$ are indicated in solid black lines. On the right, the dotted lines indicate edges of the cusp triangulation induced by $\mathcal{T}_4$ that are not edges of the triangulation induced by $\mathcal{T}_6$. Just as above in  \Cref{fig:cusp_triangs_1}, only the cusp on the right will be filled in the discussion to follow.
}
\label{fig:cusp_triangs_2}
\end{figure}

Horospherical cusp cross-sections for $\Sth\setminus 6_2^2$ are produced by identifying opposite sides of the parallelograms in the figure. In particular, the horizontal sides are identified to closed curves we will call $\mathfrak{m}_1$ on the left and $\mathfrak{m}_2$ on the right, and the diagonal sides are identified to curves $\mathfrak{l}_1$ and $\mathfrak{l}_2$. We use the same notation to refer to their homology classes.  

For a cusp $c_i$, $\mathfrak{m}_i$ and $\mathfrak{m}_i^{-1}\mathfrak{l}_i$ form a basis for its homology and we refer to these classes as meridians and longitudes respectively. It is clear from \Cref{fig:cusp_triangs_1} that $\{\mathfrak{m}_i,\mathfrak{m}_i^{-1}\mathfrak{l}_i\}$ is a \textit{geometric basis} for $H_1(c_i)$, consisting of the two shortest curves.

%%%%%%%%%%%%%%%%%%%%%%%%%%%
\subsection{The analysis of individual fillings}\label{fillin}

\begin{figure}[h]
	\begin{subfigure}{.32\textwidth}
 		\centering
   		\includegraphics[scale=.5]{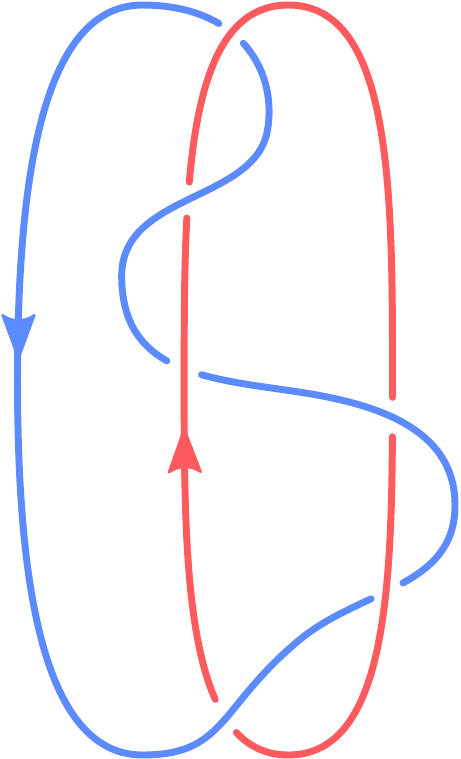}
   		\caption{}
   		\label{fig:sixtwotwo}
	\end{subfigure}
	\begin{subfigure}{.66\textwidth}
 		\centering
   		\includegraphics[scale=.5]{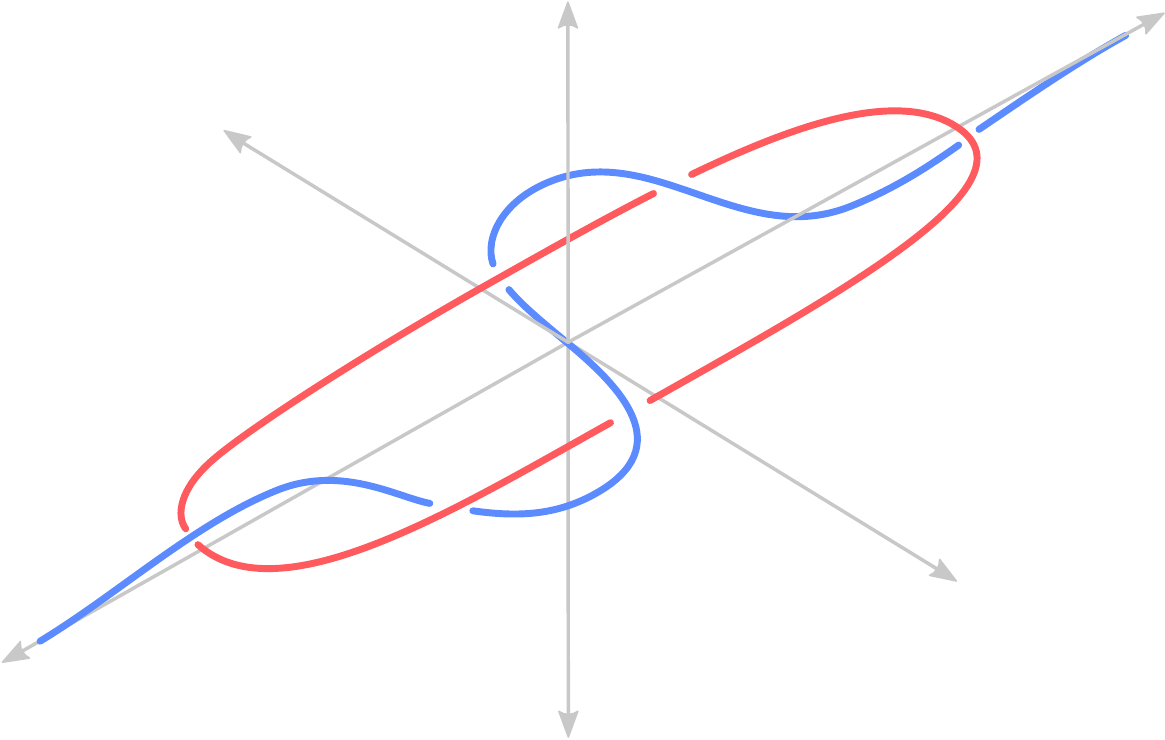}
   		\caption{}
   		\label{fig:sixtwotwo_isom}
	\end{subfigure}	
	\caption{Left: With the orientation shown, the linking number between components of the $6_2^2$ link complement is 3 (each crossing contributes $+1/2$). Right: If we identify $\Sth$ with $\R^3\cup\{\infty\}$, and embed the $6_2^2$ link as shown, with one component passing through $\infty$, then the involution $i:v\mapsto -v$ of $\R^3\cup \{\infty\}$ fixes the meridian of the blue component, and reverses the orientation of its longitude.}
	\label{fig:sixtwotwo}
\end{figure}

This subsection is devoted to the proof of \Cref{sixtwotwo}. For this we will use the ``meridian-longitude'' pair $\{\mathfrak{m}_i,\mathfrak{m}_i^{-1}\mathfrak{l}_i\}$ identified in \Cref{triang} to enumerate slopes satisfying the criteria of \Cref{earem}, but first we pause to resolve a possible dissonance. The coordinates $(p,q)$ for filling slopes are given in \Cref{earem} in terms of the standard homological framing, whereas our basis for $H_1(c_i)$ is geometric. But we claim that it does give a homological framing.

The triangulation $\mathcal{T}_6$ is the well-known Sakuma--Weeks triangulation, described in \cite{SW}. It follows from the construction of this triangulation that for each $i\in\{1,2\}$, $\mathfrak{m}_i$ is the meridian of the component $L_i$ of the $6_2^2$ link $L=L_1\cup L_2$ shown in \Cref{fig:sixtwotwo}.
 The longitude for $L_i$, i.e., the curve that bounds a disk in $\Sth\setminus L_i$, is $\mathfrak{m}_i^{-1}\mathfrak{l}_i$. To see this we use a trick of Thurston (see \cite[Section 4.6]{Th_notes}), observing that the there is an isometry of $\Sth\setminus 6_2^2$ that fixes $\mathfrak{m}_i$ and maps $\mathfrak{l}_i$ to $-\mathfrak{l}_i$, shown in \Cref{fig:sixtwotwo_isom}. Thus the meridian and longitude must be orthogonal, and referring to \Cref{fig:cusp_triangs_1} we conclude that the longitude is $\mathfrak{m}_i^{-1}\mathfrak{l}_i$.

\begin{remark}In fact, \Cref{earem}'s first and third criteria are basis-invariant. The first, on normalized length, is a geometric property of the slope.  And the third criterion regards the greatest common divisor of the coordinates $p$ and $q$, which is simply the natural number $d$ such that the slope is the $d$th power of a primitive element.
\end{remark}

Recall that the \textit{normalized length} of a closed geodesic $\gamma$ on a cross-section $T$ of $c_i$ is the length of $\gamma$ divided by the square root of the area of $T$ (cf.~\cite{HK2008shape}). Our geometric description thus gives:

\begin{fact}\label{smack't} For $i=1,2$ and any $(p,q)\in\mathbb{Z}^2-\{(0,0)\}$, $\mathfrak{m}_i^p(\mathfrak{m}_i^{-1}\mathfrak{l}_i)^q$ has normalized length:
\[ \frac{\sqrt{p^2+3q^2}}{3^{1/4}}. \]\end{fact}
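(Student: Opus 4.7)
The plan is a direct computation using the explicit Euclidean structure on a horospherical cross-section of $c_i$. Since normalized length depends only on the similarity class of the cross-section (scaling the horoball scales both length and $\sqrt{\mathrm{area}}$ by the same factor), I may use whichever horospherical representative is most convenient, in particular the one whose fundamental domain is pictured in \Cref{fig:cusp_triangs_1}.

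The key point is that $\mathcal{T}_4$ is a geometric triangulation of $\Sth\setminus 6_2^2$ by regular ideal tetrahedra, so each triangle of the induced cusp triangulation is Euclidean equilateral. Normalizing the common edge length to $1$, I place the fundamental parallelogram of \Cref{fig:cusp_triangs_1} in $\mathbb{R}^2$ so that $\mathfrak{m}_i$ (the horizontal side) is represented by the vector $(2,0)$ and $\mathfrak{l}_i$ (the diagonal side) by $(2, 2\sqrt{3})$. Then $\mathfrak{m}_i^{-1}\mathfrak{l}_i = (0, 2\sqrt{3})$, confirming the orthogonality of meridian and longitude established via the Thurston-trick argument just above the statement, and the area of the cusp torus is $|\mathfrak{m}_i|\cdot|\mathfrak{m}_i^{-1}\mathfrak{l}_i| = 4\sqrt{3}$.

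Finally, the slope $\mathfrak{m}_i^p(\mathfrak{m}_i^{-1}\mathfrak{l}_i)^q$ is represented by the lattice vector $(2p,\, 2\sqrt{3}\,q)$, whose Euclidean length is $2\sqrt{p^2 + 3q^2}$. Dividing by $\sqrt{4\sqrt{3}} = 2\cdot 3^{1/4}$ yields the claimed normalized length $\sqrt{p^2 + 3q^2}/3^{1/4}$. The computation is elementary once the Euclidean coordinates are in hand; the only point worth flagging is that the figure encodes genuine metric (rather than merely combinatorial) data, which follows immediately from $\mathcal{T}_4$ being a geometric triangulation with all tetrahedra regular ideal.
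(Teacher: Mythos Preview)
Your computation is correct and is exactly the direct verification that the paper leaves implicit (the Fact is stated without proof, as an immediate consequence of the geometric description of the cusp). One harmless slip: with the coordinates you use, the $\mathcal{T}_4$ cusp triangles actually have edge length $2$, not $1$, but since you correctly note that normalized length is scale-invariant this does not affect the argument.
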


With this in hand we proceed to the proof.

\begin{proof}[Proof of \Cref{sixtwotwo}] Suppose the $(p,q)$ filling of the cusp $c_1$ of $\Sth-6^2_2$ is covered by a knot complement with hidden symmetries (where the slope's coordinates are given in the meridian-longitude basis of \Cref{smack't}). Since $6^2_2$ has two unknotted components and $\Sth-6^2_2$ has volume $4v_0$, \Cref{earem} applies. By its first criterion and \Cref{smack't} we have $\sqrt{p^2+3q^2} < 7.5833\cdot3^{1/4}$. Its second criterion gives $\gcd(p,3) = 1$, since the components of $6^2_2$ have linking number $3$, and its third gives $\gcd(p,q)\in\{1,2,4\}$.

We address the case that $\gcd(p,q)=1$, ie.~of manifold fillings, first. Here by \Cref{thm:volume_bound_gen}, we must only check for each relevant $(p,q)$ is whether the $(p,q)$ filling of $c_1$ is isometric to the figure-eight knot complement. Before recording the list of relevant $(p,q)$, we point out that $S^3 \setminus 6^2_2$ admits an involution which sends $(p,q)$ to $(p,-q)$ (see \Cref{fig:sixtwotwo_isom}). Hence, we need only consider positive $p$ and $q$. The fillings that satisfy $\gcd(p,3)=1$, $\gcd(p,q)=1$, and having normalized length smaller than $7.5832...$ are as follows:
$$
\{(1,0),( 1,1),( 1,2),( 1,3),( 2,1),( 2,3),( 4,1),( 4,3),( 5,1),\\
( 5,2),( 7,1),( 7,2),(8,1)\}.
$$

Using SnapPy, we rigorously check that none of these fillings yields a manifold homeomorphic to the figure-eight knot complement. The fillings are listed in \Cref{tab:fillings} (code to compute this data is provided in the ancillary files of the arxiv post). The non-hyperbolic fillings can be distinguished by their fundamental groups (also compare to \cite[Table 7]{MartPet}, but note that the framing used there is non-standard).  
The hyperbolic fillings are all identifiable in SnapPy. They can be distinguished from the figure-eight knot complement by their invariant trace fields, which can be computed using the exact arithmetic functionality of SnapPy in Sage (see also the data files associated to arxiv version of this paper).

\begin{table}
\centering
\begin{tabular}{|c|c|c|c|c|c|}
\hline
\multicolumn{2}{|c|}{Non-hyperbolic fillings}
& \multicolumn{4}{|c|}{Hyperbolic fillings}\\
\hline
\hline
(1,0) & the solid torus& (1,2) & m016 & (5,1) &  m078  \\
\hline
(1,1) &$\quad$ (5,2) torus knot $\quad$& (1,3) & $\quad$m082 $\quad$& (5,2) & m115 \\
\hline
(2,1) & toroidal & (2,3) & m100 & (7,1) &  v0160\\
\hline
&&(4,1) & m034 & (7,2) & s081 \\
\hline
&&(4,3) & m121  & (8,1) &$\quad$ t00320 $\quad$\\
\hline
\end{tabular}
\caption{Hyperbolic and non-hyperbolic manifold fillings of the $6_2^2$ link complement.}
\label{tab:fillings}
\end{table}

We now consider orbifold fillings, ie.~where $\gcd(p,q)\in\{2,4\}$ by the above. The set of slopes with normalized length at most $7.5832$, with $gcd(p,q)\in\{2,4\}$ and $gcd(p,3)=1$, and as above, with $p, q\geq 0$, is as follows: 
\[ S = \{ (2, 0), (2, 2), (2, 4), (4, 0), (4, 2), (4, 4), (8, 2)\}. \]

The $(2,0)$ filling of $c_1$ is double covered by the figure-eight knot complement. The others can be eliminated individually using SnapPy's verified computations in Sage, or via deformation variety arguments (cf.~\Cref{embark}). For the former, note that SnapPy's current implementation  does not support exact arithmetic computations for orbifolds. To overcome this limitation, we construct the relevant 2-fold cyclic cover and fill along (p/2,q) for:  \{(2, 2), (2, 4), (4, 2), (8, 2)\} and a 4-fold cyclic cover filled along $(p/4,q)$ for \{(4, 0),(4,4)\}. This computation is included as an ancillary file for the arXiv posting.
\end{proof}

In some sense the $\Sth \setminus 6^2_2$ requires the most delicate analysis among the arithmetic two-component two-bridge links. As discussed earlier the Whitehead link complement cannot be filled to give a  knot complement other than the figure-eight knot complement which admits hidden symmetries. 

\begin{cor}\label{cor:all_arithmetic_tbl}
Let $N$ be the complement of an arithmetic two-bridge link. If $N(\alpha)$ is not the figure-eight knot or its unique torus cusped two-fold (orbifold) quotient, then $N(\alpha)$ is not covered by a knot complement admitting hidden symmetries.  
\end{cor}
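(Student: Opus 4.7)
The plan is to invoke the Gehring--Maclachlan--Martin classification \cite{GMacMart} to reduce to three cases: $N = \Sth\setminus 5^2_1$ (the Whitehead link), $N = \Sth\setminus 6^2_2$, and $N = \Sth\setminus 6^2_3$. The case $N(\alpha) = N$ itself, with $\alpha$ an empty multi-slope, is disposed of immediately: a two-cusped manifold cannot be covered by a one-cusped knot complement, so only fillings of at least one cusp are at issue. Likewise, a fully-filled closed orbifold cannot be covered by a noncompact knot complement, so $\alpha$ must leave exactly one cusp unfilled.

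The Whitehead link case is already contained in the explicit analysis of \cite[Section 6]{NeumReid}: the only filling of $\Sth\setminus 5^2_1$ covered by a knot complement with hidden symmetries is the figure-eight knot complement itself, which is indeed one of the two exceptions allowed by the statement. The $6^2_2$ case is exactly \Cref{sixtwotwo}, whose unique exceptional filling is the $(2,0)$-filling of one cusp, i.e.~the unique torus-cusped two-fold orbifold quotient of the figure-eight knot complement.

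For $6^2_3$, whose complement is commensurable with $\mathrm{PSL}_2(\mathcal{O}_7)$, I would combine the commensurability analysis already invoked in the proof of \Cref{corollary2} with explicit slope enumeration. Corollary 1.3(1) of \cite{CheDeMonda}, together with Fact 3.1 there, already restricts to a finite set the fillings of $\Sth\setminus 6^2_3$ that could be covered by a knot complement with hidden symmetries. Since both components of any two-component two-bridge link are unknotted and $\vol(\Sth\setminus 6^2_3) \leq 6v_0$, \Cref{earem} applies to each cusp and cuts this list to a short, explicit collection of slopes satisfying its normalized length, relative-primality with the linking number, and gcd criteria. Each surviving candidate is then eliminated by rigorous verified computation in SnapPy, directly paralleling the checks carried out for $6^2_2$ in the proof of \Cref{sixtwotwo}, with suitable 2-fold or 4-fold cyclic covers constructed to accommodate orbifold fillings where $\gcd(p,q)\in\{2,4\}$.

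The main obstacle is executing the $6^2_3$ case in full. An attractive streamlined alternative would replace the slope-by-slope check with a trace-field argument: by \cite{Hoffman_rigid_cusps}, any knot complement with hidden symmetries covers a rigid-cusped orbifold of type $S^2(3,3,3)$ or $S^2(2,3,6)$, hence has invariant trace field $\mathbb{Q}(\sqrt{-3})$, while the arithmetic invariants of fillings of $\Sth\setminus 6^2_3$ can be analyzed via its deformation variety in the spirit of \Cref{defvar}. Making this precise for the finitely many surviving slopes -- ruling out coincidental commensurability with the figure-eight or a dodecahedral knot complement -- is the delicate point.
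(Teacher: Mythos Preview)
Your overall architecture matches the paper's: reduce via \cite{GMacMart} to $5^2_1$, $6^2_2$, $6^2_3$, and handle each. The $6^2_2$ case is identical. But the other two cases are handled more cleanly in the paper than in your proposal.

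For the Whitehead link, the paper does not invoke \cite[Section~6]{NeumReid}. Instead it observes that the linking number is $0$, so by \Cref{rem:zero_linking} the only fillings covered by a knot complement at all are the $(1,q)$-fillings; these are twist knot complements, and by \cite{ReidWalsh} only the figure-eight among them has hidden symmetries. This disposes of all orbifold fillings in one stroke, without any cusp-parameter analysis.

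For $6^2_3$, your proposal is more elaborate than necessary and misses a simplification built into \Cref{earem}. The linking number between the components of $6^2_3$ is $2$, so criterion~(ii) forces $p$ odd; but then $\gcd(p,q)$ is odd, and criterion~(iii) forces $\gcd(p,q)=1$. Thus \emph{no orbifold fillings survive} the criteria of \Cref{earem}, and your $2$-fold/$4$-fold cyclic cover machinery is never needed. The paper simply lists the fifteen surviving manifold slopes and checks each by showing its shape field admits a real embedding, whence no quadratic imaginary subfield and no rigid-cusped quotient. The detour through \cite[Corollary~1.3(1)]{CheDeMonda} is also unnecessary here, since \Cref{earem} already produces a finite list directly.

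Finally, in your alternative paragraph, ``has invariant trace field $\mathbb{Q}(\sqrt{-3})$'' should be ``has invariant trace field containing $\mathbb{Q}(\sqrt{-3})$''; the cusp field is only a subfield of the invariant trace field.
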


\begin{proof}
As mentioned earlier, the components of the Whitehead link have linking number $0$ so only $(1,q)$ surgery can be covered by a knot complement. However, all such fillings are twist knot complements, and the only twist knot complement that admits hidden symmetries is the figure-eight knot complement \cite[Theorem 1.1]{ReidWalsh}).

Fillings of $\Sth\setminus 6^2_2$ are addressed by \Cref{sixtwotwo}.

It remains to show that no filling of $N=\Sth \setminus 6^2_3$ is covered by a knot admitting hidden symmetries.
The $vol( \Sth \setminus 6^2_3)\approx 5.3334895669< 6v_0$ and the linking number between the components is $2$. Applying \Cref{earem} in this case, shows we only need to consider fillings along the fifteen slopes: 

$$\{ (1, 0), (1, 1), (3, 1), (5, 1), (7, 1), (9, 1), (11, 1), (1, 2), (3, 2), (5, 2), (7, 2),$$
$$(9, 2), (1, 3), (5, 3), (1, 4) \}$$

$N(-,(1,0))$ is the solid torus, otherwise the fillings are hyperbolic. It can be observed that each of the fillings has a shape field admitting real embeddings (see ancillary files of the arXiv posting for example). Hence, none of these fillings can have quadratic imaginary subfields, which implies that none cover a rigid cusped orbifold (see \cite[Corollary 2.2]{ReidWalsh}).  
\end{proof}

%%%%%%%%%%
%%%%%%%%%%
\section{The deformation variety approach} \label{defvar}

In this section we will give an alternative proof of \Cref{sixtwotwo} using the \textit{deformation variety} of $\Sth\setminus 6^2_2$, an algebraic set parametrizing all hyperbolic structures on the link complement. Our approach in this section echoes that of Neumann--Reid in \cite[\S 6]{NeumReid}, which was applied there to the Whitehead link complement. One reason for pursuing this approach is that it yields extensive algebraic information about fillings of  $\Sth\setminus 6_2^2$. For instance \Cref{gary} here classifies all fillings of one cusp that produce an orbifold with imaginary quadratic cusp field, which comprise the arithmetic such fillings by \Cref{arith}. \Cref{trfld} and \Cref{gareth} in fact show that the cusp, trace, and invariant trace fields coincide among almost all orbifolds produced by filling a single cusp of $\Sth\setminus 6^2_2$.

\Cref{sixtwotwo} and the related results from this section concern the $(p,q)$ orbifold fillings of the cusp $c_2$ where $(p,q) \in \Z \times \Z -\{ (0,0)\}$.  As in the previous section's %Section \ref{sec:sixtwotwo}'s 
proof of  \Cref{sixtwotwo}, the fillings given by $(p,q)$, $(-p,q)$, $(p,-q)$, and $(-p,-q)$ are all homeomorphic to each other.  So, from now on, we will assume that $p$ and $q$ are both non-negative.

The deformation variety used here is determined by the four-tetrahedron triangulation $\mathcal{T}_4$ of $\Sth\setminus 6^2_2$ described in \Cref{triang}. In \Cref{defection} we leverage computations already carried out in \cite[$\S$3]{CheDeMonda}, which used the six-tetrahedron triangulation $\mathcal{T}_6$ of \Cref{triang}, to give the relevant deformation variety equations. The changeover from $\mathcal{T}_6$ to $\mathcal{T}_4$ yields some useful simplifications, which we exploit in \Cref{election} to give our second proof of \Cref{sixtwotwo}.

%%%%%%%%%%%%%%%%%%
\subsection{The deformation variety and its Dehn filling points}\label{defection} We are ready to describe the deformation variety $V$ determined by $\mathcal{T}_4$ as well as its subvariety $V_0$ of geometric structures for which the cusp $c_1$ remains complete. Here we further describe the function $\tau_{c_1}$ on $V_0$ that measures the cusp parameter of $c_1$ and the equations determining points of $V_0$ corresponding to the fillings of $c_2$. See equations %(\ref{cuts out}), 
(\ref{cusp param}) and (\ref{surgery eqn}) below.

To record the deformation variety equations, we choose a preferred edge of each tetrahedron as indicated by the dots in \Cref{fig:cusp_triangs_1,fig:cusp_triangs_2}.  For $\mathcal{T}_4$ we denote the associated edge parameters $x$, $y$, $z$ and $w$, as labelled in \Cref{fig:cusp_triangs_1}.   As in \cite[\S 3]{CheDeMonda}, we take the edge parameters for $\mathcal{T}_6$ to be $z_i$ and $z_i'$ for $i \in \{ 1,2,3\}$.  Define $\zeta_1$ to be the rational function $\zeta_1(z) = \frac{1}{1-z}$ and $\zeta_2=\zeta_1^2$.  With this definition, the cusp parameters of the three edges of $\mathcal{T}_4$ (or $\mathcal{T}_6$) corresponding to three vertices of a triangle in the cusp triangulation form a cyclically ordered triple $(z, \zeta_1(z), \zeta_2(z))$ when recorded counter-clockwise around the triangle.

 We have the following relationships between the edge parameters of $\mathcal{T}_4$ and the edge parameters of $\mathcal{T}_6$, which can be deduced from the labelling in \Cref{fig:cusp_triangs_1,fig:cusp_triangs_2}:
 \begin{align*}
 & x = z_1'\zeta_1(z_1) && y = z_3\zeta_2(z_3') && z = z_3'\zeta_2(z_3) && w = z_1\zeta_1(z_1') \end{align*}
It is possible to solve these for $z_1$, $z_1'$, $z_3$ and $z_3'$ in terms of $x$, $y$, $z$ and $w$, yielding:\begin{align*}
 & z_1 = \frac{w(1-x)}{1-xw} && z_1' = \frac{x(1-w)}{1-xw} && z_3 = \frac{1-zy}{1-z} && z_3' = \frac{1-zy}{1-y} \end{align*}
With this the deformation variety equations associated to $\mathcal{T}_4$ can be obtained from those associated to $\mathcal{T}_6$ by substitution. The equations labeled (1/2) in \cite[\S 3]{CheDeMonda} yield $z_2 = z_2' = xw$; those labeled (1/4) yield $z_2 = z_2' = yz$, from which we conclude that $xw = yz$. After substitution and simplification, those labeled (0:1) and (1/3:2/7) both give
\begin{align}\label{big variety} (1-x)(1-y)(1-z)(1-w) = xw = yz \end{align}
This yields an irreducible degree-five polynomial in $x$, $y$ and $z$, say, after substituting $w=yz/x$ above, which determines a deformation variety $V$.

The subvariety $V_0$ of the deformation variety where $c_1$ is complete is determined by the equation $\mu(\mathfrak{m}_1) = 1$, where $\mu$ is the derivative of the holonomy, and $\mathfrak{m}_1$ the simple closed curve on $c_1$ shown in \Cref{fig:cusp_triangs_1}. For those unfamiliar, $\mu$ is explicitly defined by Neumann--Zagier in \cite{NZ}, and re-recorded as equation (3) in \cite[\S 1.2]{CheDeMonda}. It can be computed directly from the definition, or by substituting in the formula for $\mu(\mathfrak{m}_1)$ in \cite[\S 3]{CheDeMonda} ($\mu(\mathfrak{m})$ there), that 
\begin{align*} \mu(\mathfrak{m}_1) = - \frac{w}{1-w} \frac{1}{1-z}. \end{align*}
This means that $z = 1/(1-w)$ on $V_0$ which, together with the fact that $zy=xw$, implies that $y = xw(1-w)$.  When these substitutions are made in \Cref{big variety}, it becomes $P=0$ where
\begin{align*}%\label{cuts out} 
P&=1 - x(1-x)w(1-w). \end{align*}
This realizes $V_0$ as an affine algebraic curve in $\C^2$ with coordinates $w$ and $x$.

Recall that $\Sth\setminus 6_2^2$ admits an involution which exchanges $w$ and $x$.  Under the complete structure, this involution is represented by an isometry; hence $w=x$ for any point of $V$ which represents this structure.  The $w=x$ subset of $V$ is the four point set with $w \in \left\{ \frac12 (1 \pm i\sqrt{3}), \frac12 (1 \pm \sqrt{5}) \right\}$.   Therefore, the positively oriented complete structure on $\Sth \setminus 6_2^2$ corresponds exactly to the point $w=x=\frac12 (1 + i \sqrt{3})$ and each tetrahedron is regular.  The points $w=x=\frac12 (1 \pm \sqrt{5})$ lie in $\R^2$ and do not represent hyperbolic fillings.

%Note also that $V_0$ has involutions given by exchanging $x$ and $w$ (which corresponds to an involution of $6^2_2$), and by exchanging $x$ with $1-x$ and/or $w$ with $1-w$, as well as by exchanging $x$ and $w$ with their complex conjugates.

We now identify the cusp parameter function of the cusp $c_1$. Equation (4) in the proof of \cite[Prop.~1.6]{CheDeMonda} defines, for each $\mathfrak{g}\in H_1(T)$, a function $\tau(\mathfrak{g}):V_0\to\C$. Letting the reference edge $f$ coincide with the curve $\mathfrak{m}_1$, we get the constant map $\tau(\mathfrak{m}_1) = 1$. Taking the longitude $\mathfrak{l}_1$ to be as shown in \Cref{fig:cusp_triangs_1,fig:cusp_triangs_2}, we obtain;
\[ \tau(\mathfrak{l}_1) = x - x\zeta_2(x) y \zeta_2(z) = x- (x-1)xw(1-w)\,w. \]
The latter equality holds on $V_0$ where we have substituted $z = 1/(1-w)$ and $y = xw(1-w)$.  It follows that $\tau(\mathfrak{l}_1) = x+w$ in the coordinate ring $\C[V_0]$ for $V_0$.  Hence, using the rational function given in \cite[Prop.~1.6(3), Def. 1.7]{CheDeMonda}, we find that the cusp parameter function $\tau_{c_1} \in \C[V_0]$ for $c_1$ is
\begin{align}\label{cusp param} \tau_{c_1} = \frac{\tau(\mathfrak{l}_1)}{\tau(\mathfrak{m}_1)} = x+w. \end{align}

We now record $\mu(\mathfrak{m}_2)$ and $\mu(\mathfrak{l}_2)$ for curves $\mathfrak{m}_2$ and $\mathfrak{l}_2$ that form a homology basis for the second cusp $c_2$, pictured on the right in \Cref{fig:cusp_triangs_1,fig:cusp_triangs_2}. We have:
\begin{align*} 
   \mu(\mathfrak{m}_2) & = -\zeta_1(z_2')z_3\zeta_1(z_2)\zeta_1(z_1')z_1'\zeta_1(z_1) = \frac{x(1-w)}{w(1-x)} = x^2(1-w)^2 = \frac{1}{w^2(1-x)^2} \\
   \mu(\mathfrak{l}_2) & = z_1\zeta_1(z_1')z_1'\zeta_2(z_2)\zeta_2(z_3')z_3\zeta_2(z_3')\zeta_2(z_2')\zeta_1(z_2')z_3\zeta_1(z_2)\zeta_1(z_1') \\
   & = \frac{xw(1-w)^3}{1-x} = \frac{(1-w)^2}{(1-x)^2} \end{align*}
The final equalities for both $\mu(\mathfrak{m}_2)$ and $\mu(\mathfrak{l}_2)$ above use that $P=0$ and are valid only in the coordinate ring for $V_0$. It will be convenient below to replace $\mathfrak{l}_2$ with the alternative basis element $\mathfrak{m}_2^{-1}\mathfrak{l}_2$ for $H_1(c_2)$, for which we have:\begin{align*}
   \mu(\mathfrak{m}_2^{-1}\mathfrak{l}_2) & = \frac{w(1-w)}{x(1-x)} = w^2(1-w)^2 = \frac{1}{x^2(1-x)^2} \end{align*}
   
Using the basis $\{\mathfrak{m}_2,\mathfrak{m}_2^{-1}\mathfrak{l}_2\}$ for $H_2(c_2)$, at $(p,q)$-surgery on $c_2$ we have
\begin{align}\label{surgery eqn} \left[\frac{x(1-w)}{w(1-x)}\right]^p\left[\frac{w(1-w)}{x(1-x)}\right]^q = 1. \end{align}

%This collection of equations has some symmetries. First, note that $(x,w)$ satisfies \Cref{surgery eqn} for a particular $(p,q)$ if and only it satisfies it for $(-p,-q)$, reflecting that Dehn filling is indifferent to slope orientation. For any fixed $(p,q)$, \Cref{surgery eqn} is also simultaneously satisfied by $(x,w)$ and $(\bar{x},\bar{w})$ in $V_0$.
%
%For the final observation, recall from below \Cref{cuts out} that $(x,w)\in V_0$ if and only if $(1-x,1-w)\in V_0$. Substituting the latter for the former above, we obtain:
%
%\begin{fact}\label{act} A point $(x,w)\in V_0$ satisfies the Dehn surgery \Cref{surgery eqn} for a particular $(p,q)\in\mathbb{Z}^2$ if and only if $(x',w') = (1-\bar{x},1-\bar{w})\in V_0$ satisfies it for $(p',q')$, where $p'=-p$ and $q'=q$.\end{fact}
%
%We have used complex conjugates here since for all positively oriented hyperbolic structures on $S^3-6_2^2$, all tetrahedral parameters lie in the upper half-plane.

%\eric{***I edited the statement of the next lemma slightly and changed  the exposition of the proof.  The ideas remain the same as before.  I also moved this lemma forward from the following subsection.  I made enough edits to the remainder of Section 6, that it seemed silly to make it all {\em Cheesey} colored, but still I stayed true to Jason's brilliant argument.  The old verson is commented out in the tex file, so we can recover any of its parts that are missed.***}

We note now that the polynomial $P$ defining $V_0$ and the cusp parameter function $\tau_{c_1}\in\C[V_0]$ are each symmetric in $x$ and $w$. While the Dehn surgery equation (\ref{surgery eqn}) above is not symmetric in $x$ and $w$, our final observation here is that it has a symmetric factor satisfied by any point of $V_0$ corresponding to a hyperbolic Dehn filling of the cusp $c_2$.

\begin{lem}\label{cable} 
Suppose that $(p,q)$ is a pair of non-negative integers which are not both zero.  There is a symmetric polynomial $g_{(p,q)} \in \Z[w,x]$ such that $g_{(p,q)}(w,x)=0$ whenever $(w,x) \in V_0$ corresponds to a $(p,q)$-hyperbolic Dehn filling of $c_2$.
\end{lem}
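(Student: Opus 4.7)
The plan is to construct $g_{(p,q)}$ directly from the Dehn surgery equation (\ref{surgery eqn}) by clearing denominators and then dividing out an antisymmetric factor. First I would rewrite (\ref{surgery eqn}) as a polynomial equation. This is permissible because $w$, $x$, $1-w$, and $1-x$ are all non-zero at any hyperbolic point of $V_0$, since the edge parameters have strictly positive imaginary part. Cross-multiplying and regrouping produces the polynomial equation $f_{(p,q)}(w,x)=0$, where
\[
f_{(p,q)}(w,x) \doteq x^p w^q (1-w)^{p+q} - w^p x^q (1-x)^{p+q} \in \Z[w,x].
\]
By construction $f_{(p,q)}$ vanishes at every point of $V_0$ corresponding to a hyperbolic $(p,q)$-filling of $c_2$.

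The key observation is that $f_{(p,q)}$ is \emph{antisymmetric} under the swap $w \leftrightarrow x$: direct inspection gives $f_{(p,q)}(x,w) = -f_{(p,q)}(w,x)$. Consequently $f_{(p,q)}$ vanishes on the diagonal $w=x$, so $(x-w)$ divides $f_{(p,q)}$ in $\Q[w,x]$; since $(x-w)$ is primitive in $\Z[w,x]$, the quotient $g_{(p,q)} \doteq f_{(p,q)}/(x-w)$ lies in $\Z[w,x]$. The standard fact that every antisymmetric polynomial in two variables is $(x-w)$ times a symmetric one then ensures that $g_{(p,q)}$ is itself symmetric.

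It remains to verify that $g_{(p,q)}$ vanishes at every hyperbolic $(p,q)$-filling point in $V_0$, which reduces to showing that no such point lies on the diagonal $w=x$. Intersecting $w=x$ with $P=1-x(1-x)w(1-w)=0$ yields $x^2(1-x)^2=1$, whose four solutions $x\in\bigl\{\tfrac12(1\pm i\sqrt{3}),\,\tfrac12(1\pm\sqrt{5})\bigr\}$ were already catalogued in the discussion preceding (\ref{cusp param}): the complex pair corresponds to the positively oriented complete structure on $\Sth\setminus 6^2_2$ and its reverse-oriented conjugate, while the real pair corresponds to degenerate non-hyperbolic structures. Since none of these four diagonal points is a hyperbolic Dehn filling of $c_2$, we have $x\neq w$ at any such filling, and therefore $g_{(p,q)}(w,x) = f_{(p,q)}(w,x)/(x-w) = 0$ at that point.

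The main (and rather mild) obstacle is spotting the antisymmetry of $f_{(p,q)}$ in the first place; once that is observed, the division by $(x-w)$ and the diagonal check are essentially routine bookkeeping.
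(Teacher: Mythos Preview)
Your argument is correct and follows the same core idea as the paper's proof: clear denominators in the surgery equation to obtain an antisymmetric polynomial, then divide by $w-x$ and use that the diagonal points of $V_0$ are not hyperbolic fillings. The only differences are cosmetic. The paper first strips off the symmetric monomial factor $(wx)^{\min(p,q)}$ from your $f_{(p,q)}$ before dividing by $w-x$, splitting into the two cases $p\le q$ and $q<p$, and then carries out the division explicitly via the binomial theorem and the identity $(w-x)s_d = w^{d+1}-x^{d+1}$ for $s_d=\sum_{j=0}^d w^jx^{d-j}$. This yields closed formulas (labeled (\ref{R1}) and (\ref{R2})) for $g_{(p,q)}$ as $\Z$-linear combinations of the $s_d$ and $(wx)^j s_d$. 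Those explicit formulas are not needed for the lemma itself, but they are the input to the degree computations in Propositions~\ref{aiu} and~\ref{carrie}, so you would eventually need to produce them anyway.
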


\begin{proof}
This proof utilizes the binomial theorem to express \Cref{surgery eqn} 
in terms of the homogeneous symmetric polynomials.  

For a non-negative integer $d$, let $s_d$ be the degree-$d$ homogeneous symmetric polynomial: 
\[ s_d = \sum_{j=0}^d w^j x^{d-j}.\]
Define also $s_{-1}=0$.  After expanding and simplifying in the $d\geq 0$ case, we find that
\begin{align}\label{sym identity}
(w-x) s_d &= w^{d+1}-x^{d+1}.
\end{align}
This formula also holds in the $d=-1$ case.  From the binomial theorem,
\begin{align} \label{bin}
(1-x)^d&= \sum_{j=0}^d  {d\choose j}  (-x)^j  = 1+ \sum_{j=0}^{d-1} {d \choose j+1} (-x)^{j+1}. 
\end{align}
For $(p,q)$ as in the statement of the lemma, $(p,q) \in R_1 \cup R_2$ where
\[ R_1 = \left\{ (p,q) \in \Z \times \Z \, \big| \, 0 \leq p \leq q \right\} \quad \text{and} \quad R_2 = \left\{ (p,q) \in \Z \times \Z \, \big| \, 0 \leq q < p \right\}. \]

First, suppose that $(p,q) \in R_1 - \{ (0,0) \}$.  Then, using (\ref{sym identity}) and (\ref{bin}),  \Cref{surgery eqn} is equivalent to 
\begin{align*}
0&= w^{q-p} (1-w)^{p+q} - x^{q-p} (1-x)^{p+q} \\
&= \sum_{j=0}^{p+q} {p+q \choose j} (-1)^j \left( w^{q-p+j} - x^{q-p+j} \right) \\
&= (w-x) g_{(p,q)}
\end{align*}
where
\begin{align} \label{R1} g_{(p,q)} &= \sum_{j=0}^{p+q} {p+q \choose j} (-1)^j s_{q-p+j-1}. \end{align}

Now, suppose that $(p,q) \in R_2$ and let
\begin{align} \label{R2}  g_{(p,q)} &=  \sum_{j=0}^{p-q-1} {p+q \choose j} (-wx)^j  s_{p-q-j-1} - \sum_{j=p-q+1}^{p+q} {p+q \choose j} (-1)^j (wx)^{p-q} s_{j+q-p-1}.\end{align}
(If $q=0$, the latter sum is empty.) Using (\ref{sym identity}) and (\ref{bin}) again,  \Cref{surgery eqn} is equivalent to 
\begin{align*}
0&= w^{p-q} (1-x)^{p+q} - x^{p-q} (1-w)^{p+q} \\
&= \sum_{j=0}^{p+q} {p+q \choose j} (-1)^j \left( w^{p-q}x^j - x^{p-q}w^j \right) \\
&= (w-x) g_{p,q)}.
\end{align*}

As discussed above the Lemma, the only points of $V_0$ with $w=x$ correspond to the complete structure or lie in $\mathbb{R}^2$. Therefore if $(w,x)$ corresponds to a $(p,q)$-hyperbolic Dehn filling of $c_2$, hence satisfying \Cref{surgery eqn}, then it satisfies $g_{(p,q)}(w,x)=0$.\end{proof}

\begin{remark}\label{embark} The fillings in the exceptional set $S$ from the previous sub-section's proof of \Cref{sixtwotwo} can be analyzed using Groebner bases.  For a given $s \in S$, take $P$ and $g_s$ as above and let $C$ be the polynomial $w+x-t$.  Next, compute a Groebner basis for the ideal $(P, C, g_s) \subset \Z[t,w,x]$ to obtain a polynomial $T_s \in \Z[t]$ in which the variable $t$ represents the cusp parameter $\tau_{c_1}$ for the corresponding filling of $\Sth\setminus 6_2^2$. Except for $s=(2,0)$, each $T_s$ is irreducible and of degree at least four, showing that the cusp parameter is not quadratic imaginary in these cases.
\end{remark}

\subsection{The proof of \Cref{sixtwotwo} via deformation varieties}\label{election} 
We now turn to a direct proof of all cases of \Cref{sixtwotwo}. The methods applied here rely on further study of the deformation variety of $\mathcal{T}_4$. In contrast to the previous proof, they are agnostic to manifold vs orbifold fillings and we need not worry if $p$ and $3$ are relatively prime.  Instead of considering geodesics arising from fillings, we will exploit the fact that all important data above is carried by polynomials that are symmetric in $x$ and $w$.  

Using coordinates $(\sigma_1, \sigma_2)$ on the image, let $\phi \co \C^2 \to \C^2$ be the regular map $(w,x) \mapsto (w+x, wx)$.  This induces a ring homomorphism $\phi_\ast \co \Z[\sigma_1, \sigma_2] \to \Z[w,x]$ which substitutes $w+x$ and $wx$ for $\sigma_1$ and $\sigma_2$.  Although $\phi_\ast$ is injective, it is not surjective since every element in its image must be symmetric  in $w$ and $x$.  However, $P \in \text{Im}(\phi_\ast)$.  In particular, if we take
\begin{align*}%\label{symm cuts out}
	f = 1 + \sigma_1\sigma_2 - \sigma_2 -\sigma_2^2
\end{align*}
then $\phi_\ast f = P$.  Therefore, if we take $U_0$ to be the affine curve determined by $f=0$ then $\phi$ restricts to a regular map $V_0 \to U_0$ and $\phi_\ast$ descends to the respective coordinate rings. Notice that $\phi_\ast \sigma_1 = \tau_{c_1}$.

\begin{remark} \label{cusp field} If $\phi(w,x) = (\sigma_1, \sigma_2) \in U_0$, where $(w,x)\in V_0$ corresponds to $(p, q)$-hyperbolic Dehn filling of $c_2$, then the cusp field of the unfilled cusp $c_1$ is $\mathbb{Q}(\sigma_1)$. \end{remark}

The key to proving \Cref{sixtwotwo} via deformation variety arguments is now reduced to computing which Dehn filling parameters yield quadratic imaginary fields for $\mathbb{Q}(\sigma_1)$.  After analyzing the equations that define the corresponding points in $U_0$, we will see that $\mathbb{Q}(\sigma_1)$ is only quadratic imaginary for the parameters $(2,0)$ and $(3,0)$. The remainder of this section is dedicated to establishing this.

Because the symmetric polynomials $s_d$ from the proof of \Cref{cable} lie in the image of $\phi_\ast$ (see \cite[$\S$14.6]{dummitfoote2004}), the next result follows from the descriptions of the $g_{(p,q)}$ there.

\begin{cor}\label{barry} 
For a pair $(p,q)$ of non-negative integers which are not both zero, there is a polynomial $h_{(p,q)} \in \Z[\sigma_1, \sigma_2]$ with $\phi_\ast h_{(p,q)} = g_{(p,q)}$. Thus if $\phi(w,x) = (\sigma_1, \sigma_2) \in U_0$, where $(w,x)\in V_0$ corresponds to $(p, q)$-hyperbolic Dehn filling of $c_2$, then $h_{(p,q)}(\sigma_1,\sigma_2) = 0$.
\end{cor}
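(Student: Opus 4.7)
The plan is to produce $h_{(p,q)}$ by invoking the fundamental theorem of symmetric polynomials, then deduce the vanishing statement by chasing through $\phi_\ast$. Since $\phi_\ast$ is injective, uniqueness of $h_{(p,q)}$ will be immediate once existence is established.

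First I would verify that $g_{(p,q)} \in \Z[w,x]$ is symmetric in $w$ and $x$. Inspecting the two definitions (\ref{R1}) and (\ref{R2}) from the proof of \Cref{cable}, each $g_{(p,q)}$ is an integer linear combination of the polynomials $s_d$ and of products of the form $(wx)^j s_d$. The complete homogeneous symmetric polynomial $s_d = \sum_{j=0}^{d} w^j x^{d-j}$ is manifestly symmetric, as is $wx$, so $g_{(p,q)}$ is symmetric. The fundamental theorem of symmetric polynomials (over $\Z$, for two variables) states that every symmetric polynomial in $\Z[w,x]$ is a polynomial in the elementary symmetric polynomials $w+x$ and $wx$ with integer coefficients. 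Applied to $g_{(p,q)}$, this yields the desired $h_{(p,q)} \in \Z[\sigma_1,\sigma_2]$ satisfying $\phi_\ast h_{(p,q)} = g_{(p,q)}$. Injectivity of $\phi_\ast$ makes $h_{(p,q)}$ unique, although we do not need this for the corollary.

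The ``thus'' clause is then a one-line chase. If $(w,x) \in V_0$ corresponds to a $(p,q)$-hyperbolic Dehn filling of $c_2$ and $(\sigma_1,\sigma_2) = \phi(w,x)$, then by \Cref{cable} we have $g_{(p,q)}(w,x) = 0$. But
\[ h_{(p,q)}(\sigma_1,\sigma_2) \;=\; h_{(p,q)}(\phi(w,x)) \;=\; (\phi_\ast h_{(p,q)})(w,x) \;=\; g_{(p,q)}(w,x) \;=\; 0, \]
which is exactly the conclusion.

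There is no substantive obstacle here; the corollary is essentially a bookkeeping statement that records the fact that the Dehn surgery equation descends from $V_0$ to $U_0$ via $\phi$. The only minor care needed is to observe that the expressions appearing in (\ref{R1}) and (\ref{R2}) really are symmetric, in particular that the appearance of $-wx$ (versus $-1$ alone) as a factor in some terms of (\ref{R2}) does not break symmetry, which it plainly does not since $wx = \sigma_2$. If desired one could additionally read off explicit expressions of $s_d$ in terms of $\sigma_1,\sigma_2$ via the Newton--Girard-type recursion $s_d = \sigma_1 s_{d-1} - \sigma_2 s_{d-2}$ (with $s_0 = 1$, $s_{-1} = 0$), but the corollary as stated requires only the existence of $h_{(p,q)}$, which the fundamental theorem supplies.
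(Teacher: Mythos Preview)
Your proposal is correct and follows essentially the same approach as the paper: both observe that $g_{(p,q)}$ is built from the symmetric polynomials $s_d$ and $wx$, then invoke the fundamental theorem of symmetric polynomials (the paper cites Dummit--Foote \S14.6) to conclude that $g_{(p,q)}$ lies in the image of $\phi_\ast$. Your treatment is somewhat more explicit in chasing the ``thus'' clause and in noting the recursion for $s_d$, but the underlying argument is identical.
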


We will say that a point $(\sigma_1,\sigma_2)\in U_0$ \textit{corresponds to $(p,q)$-hyperbolic Dehn filling of $c_2$} if it is the image of such a Dehn filling point $(w,x)\in V_0$, as in \Cref{cusp field} and \Cref{barry}. To begin analyzing the polynomials $h_{(p,q)}$ that these points satisfy, we first reinterpret the symmetric polynomials $s_d$ in terms of $\sigma_1$ and $\sigma_2$.

\begin{lem}\label{able} 
The polynomials $t_d \in \Z[\sigma_1, \sigma_2]$ such that $\phi_\ast t_d = s_d$ have the form $t_{-1} = 0$, $t_0 = 1$, $t_1 = \sigma_1$, and for $d>1$:
\[ t_d = \sigma_1^d + n_1 \sigma_1^{d-2}\sigma_2 + n_2\sigma_1^{d-4}\sigma_2^2 + \hdots + n_{\lfloor d/2\rfloor}\sigma_1^{\delta}\sigma_2^{\lfloor d/2\rfloor}, \]
where $n_j \in \Z$ and  $\delta = 0$ if $d$ is even and $\delta = 1$ if $d$ is odd.  Also,  $n_{1} = -d+1$.
\end{lem}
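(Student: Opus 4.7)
My plan is to derive a two-term recurrence for $t_d$ from the analogous recurrence for the $s_d$, and then read off the claimed structure by strong induction. First I would verify the standard identity
\[ s_d = \sigma_1 s_{d-1} - \sigma_2 s_{d-2} \quad (d \geq 2), \]
by expanding $(w+x)\, s_{d-1}$ and regrouping: writing out $w \cdot s_{d-1} = \sum_{k=1}^{d} w^k x^{d-k}$ and $x \cdot s_{d-1} = \sum_{k=0}^{d-1} w^k x^{d-k}$, the sum is $s_d + wx \cdot s_{d-2}$. Since $\sigma_1, \sigma_2$ are algebraically independent, $\phi_\ast$ is injective, and the fundamental theorem on symmetric polynomials ensures every $s_d$ lies in its image. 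Hence $t_d \in \Z[\sigma_1,\sigma_2]$ is uniquely determined, and applying $\phi_\ast^{-1}$ to the recursion above yields $t_d = \sigma_1 t_{d-1} - \sigma_2 t_{d-2}$ for $d \geq 2$.

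Next I would verify the base cases: $t_0 = 1$ and $t_1 = \sigma_1$ are given, and one application of the recursion gives $t_2 = \sigma_1^2 - \sigma_2$, matching the claim with $n_1 = -1 = -2+1$. For the inductive step, if $t_{d-1}$ and $t_{d-2}$ are $\Z$-linear combinations of monomials $\sigma_1^a \sigma_2^b$ with $a + 2b$ equal to $d-1$ and $d-2$ respectively, then multiplication by $\sigma_1$ or by $-\sigma_2$ produces only monomials with $a+2b = d$. Hence $t_d$ is a $\Z$-linear combination of the monomials $\sigma_1^{d-2b}\sigma_2^b$ for $0 \leq b \leq \lfloor d/2 \rfloor$, which is exactly the shape asserted in the lemma.

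Finally, I would extract the two specific coefficients. The leading coefficient at $\sigma_1^d$ receives contributions only from $\sigma_1 \cdot \sigma_1^{d-1}$ (the leading term of $t_{d-1}$), giving $1$ by induction. The coefficient $n_1$ of $\sigma_1^{d-2}\sigma_2$ in $t_d$ receives two contributions: from $\sigma_1$ times the $\sigma_1^{d-3}\sigma_2$ term of $t_{d-1}$, which is $-(d-1)+1 = -(d-2)$ by induction, and from $-\sigma_2$ times the $\sigma_1^{d-2}$ term of $t_{d-2}$, which is $-1$. Summing gives $n_1 = -(d-2) - 1 = -d+1$, as claimed. There is no substantive obstacle here; the argument is routine bookkeeping around a two-term recurrence. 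The only point needing care is the edge $d = 2$, where the formula collapses to two monomials and the inductive step must be set up so that the base cases $t_0 = 1$ and $t_{-1} = 0$ feed correctly into the coefficient calculation.
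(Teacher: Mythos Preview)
Your argument is correct. The recurrence $s_d=(w+x)s_{d-1}-wx\,s_{d-2}$ is valid (your regrouping is right), the induction on the weighted-homogeneous shape goes through, and the coefficient extraction for the leading term and for $n_1$ is accurate, including at the edge cases $d=2,3$ once one interprets the missing $\sigma_1^{-1}\sigma_2$ coefficient in $t_1$ as zero.

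However, the approach is genuinely different from the paper's. The paper does not use a recurrence or induction at all: it simply observes that $s_d$ is homogeneous of degree $d$ in $w,x$, and since $\phi_\ast(\sigma_1)=w+x$ and $\phi_\ast(\sigma_2)=wx$ have degrees $1$ and $2$, any monomial $\sigma_1^a\sigma_2^b$ appearing in $t_d$ must satisfy $a+2b=d$. This immediately gives the claimed shape. For the two specific coefficients, the paper compares coefficients of particular monomials in $w,x$: only $\sigma_1^d$ can contribute $w^d$ and $x^d$, fixing the leading coefficient as $1$; and comparing the coefficient of $w^{d-1}x$ (which is $1$ in $s_d$, $d$ in $(w+x)^d$, and $1$ in $(w+x)^{d-2}wx$) forces $n_1=-d+1$. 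Your recurrence method is slightly longer but has the virtue of yielding the two-term relation $t_d=\sigma_1 t_{d-1}-\sigma_2 t_{d-2}$ explicitly, which could be independently useful; the paper's homogeneity argument is shorter and more conceptual but does not record this.
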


\begin{proof} Since $s_d$ is homogeneous of degree $d$ and $\sigma_1$ and $\sigma_2$ have degree $1$ and $2$ as functions in $w$ and $x$, the powers of $\sigma_1$ that occur all have parity matching that of $d$. 

There is a monomial term $\sigma_1^d$ of $t_d$ that accounts for the terms $w^d$ and $x^d$ of $s_d$.  The terms of $\phi_\ast \sigma_1^d$  that are linear in one variable are $d\,x^{d-1}w$ and $d\,xw^{d-1}$, so to obtain the corresponding terms of $s_d$ it must be true that $c_{1,d} = -d+1$.\end{proof}

Recall that $U_0$ is the algebraic curve on which $f=0$.  Since this equation can be solved for $\sigma_1$ as $\sigma_1 = \sigma_2+1-1/\sigma_2$, $U_0$ is the graph of the function $\sigma_2 \mapsto \sigma_2+1-1/\sigma_2$.   So, the rational function $\psi \co \C \to \C^2$ given by $\sigma_2 \mapsto (\sigma_2+1-1/\sigma_2, \sigma_2)$ is birational onto $U_0$.   Moreover, the induced map between function fields $\psi_\ast \co \C(U_0) \to \C(\sigma_2)$ is an isomorphism.

Lemma \ref{able} allows us to understand the rational functions $\psi_\ast h_{(p,q)}$ of one variable %polynomials 
well enough to prove the next proposition, a key pillar of our argument.

\begin{prop}\label{aiu} 
Suppose $(p,q)$ is a pair of non-negative integers with $q \neq 0$ and $p \neq 2q$. If $(\sigma_1,\sigma_2)$ corresponds to $(p,q)$-hyperbolic Dehn filling of $c_2$ then $\sigma_2$ is a unit algebraic integer.
\end{prop}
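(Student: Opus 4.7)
The plan is to recast the condition $h_{(p,q)}(\sigma_1,\sigma_2) = 0$ on $U_0$ as a one-variable relation on $\sigma_2$ by using the rational parametrization $\psi \colon \sigma_2 \mapsto (A(\sigma_2),\sigma_2)$ with $A(\sigma_2) = \sigma_2 + 1 - 1/\sigma_2$. Since $h_{(p,q)} \in \mathbb{Z}[\sigma_1,\sigma_2]$ by \Cref{barry}, multiplying by a sufficiently large power $\sigma_2^D$ will clear the denominators introduced by $A$ and produce a polynomial $R(\sigma_2) \in \mathbb{Z}[\sigma_2]$ that is satisfied by $\sigma_2$. If I can show that both the leading and the constant coefficients of $R$ lie in $\{\pm 1\}$, then $\sigma_2$ is an algebraic integer, its monic minimal polynomial $m \in \mathbb{Q}[T]$ divides $R$ and hence lies in $\mathbb{Z}[T]$ by Gauss's lemma, and then the factorization $R = m \cdot q$ in $\mathbb{Z}[T]$ forces $m(0) \in \{\pm 1\}$, which is exactly the assertion that $\sigma_2$ is a unit algebraic integer.

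To identify these two extremal coefficients I will pull back through $\phi \colon V_0 \to U_0$ and work with the anti-symmetric polynomial $F_{(p,q)} = (w-x)\,\phi_\ast h_{(p,q)}$ appearing in the proof of \Cref{cable}: explicitly, $w^{q-p}(1-w)^{p+q} - x^{q-p}(1-x)^{p+q}$ when $p \le q$ and $w^{p-q}(1-x)^{p+q} - x^{p-q}(1-w)^{p+q}$ when $p > q$. Parametrizing $V_0$ over $U_0$ by the two branches of $w,x$ satisfying $w+x = A(\sigma_2)$ and $wx = \sigma_2$, the identity $(1-w)(1-x) = 1/\sigma_2$ on $V_0$ forces the asymptotics $w \sim \sigma_2,\; x \sim 1,\; 1-w \sim -\sigma_2,\; 1-x \sim -1/\sigma_2^2$ as $\sigma_2 \to \infty$ and $w \sim -1/\sigma_2,\; x \sim -\sigma_2^2,\; 1-w \sim 1/\sigma_2,\; 1-x \sim 1$ as $\sigma_2 \to 0$. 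Substituting these into each of the two monomial summands of $F_{(p,q)}$ and dividing by $w-x$ will read off the leading and lowest-order Laurent terms of $\phi_\ast h_{(p,q)}(A(\sigma_2),\sigma_2)$ purely in terms of $p$ and $q$, and hence fix the minimal $D$ along with both extreme coefficients of $R$.

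Carrying out this case analysis in the three regimes $p \le q$, $q < p < 2q$, and $p > 2q$ (which together exhaust the hypotheses $q \ne 0$ and $p \ne 2q$), the two monomial summands of $F_{(p,q)}$ always have distinct leading exponents at each limit, so no cancellation occurs and the extremal coefficients of $R$ are forced to be $\pm 1$. The main obstacle, and the reason for excluding $p = 2q$, is that in that excluded case the two summands of $F_{(p,q)}$ share the same leading exponent $\sigma_2^{-q}$ at $\sigma_2 = 0$ with opposite signs, so their leading contributions cancel and the simple asymptotic analysis no longer pins down the constant term of $R$. Outside that case, what remains is the essentially mechanical bookkeeping of signs and exponents, after which the Gauss's lemma step above closes the argument.
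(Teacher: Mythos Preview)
Your approach is correct and reaches the same endpoint as the paper---showing that the Laurent polynomial $\psi_\ast h_{(p,q)}(\sigma_2)$ has both its highest- and lowest-order coefficients equal to $\pm 1$---but by a genuinely different route. The paper works downstream of \Cref{barry}: it invokes the explicit formulas (\ref{hR1}) and (\ref{hR2}) for $h_{(p,q)}$ as sums of the $t_d$, then uses \Cref{able} to read off the top and bottom terms of each $\psi_\ast t_d$, and tracks which summand in the resulting combination contributes the extremal terms. You instead step back upstream to the anti-symmetric factor $F_{(p,q)}=(w-x)\,g_{(p,q)}$ from the proof of \Cref{cable} and compute the extremal terms directly via the Laurent expansions of the two branches $w(\sigma_2),x(\sigma_2)$ over $U_0$. (These are honest Laurent series, not Puiseux series, since the discriminant $A(\sigma_2)^2-4\sigma_2$ has even order at both $0$ and $\infty$; this point is worth making explicit.) Your route bypasses \Cref{able} entirely, needs only the product-form expression for $F_{(p,q)}$ rather than the expanded sums, and makes the reason for the $p=2q$ exclusion completely transparent: it is exactly the case where the two monomials of $F_{(p,q)}$ acquire the same leading exponent at $\sigma_2\to 0$. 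The paper's bookkeeping, on the other hand, stays entirely within the symmetric-function calculus and never has to talk about branches of an algebraic function, which some readers may prefer.
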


\begin{proof}%[Proof of \Cref{aiu}] 
Take $(p,q)$ and $(\sigma_1,\sigma_2)$ as in the statement of the proposition.  The strategy is to factor and simplify $\psi_\ast h_{(p,q)}$ letting $h'_{(p,q)} \in \Z[\sigma_2]$ represent the numerator of the result.   Since $\sigma_2$ must satisfy $h'_{(p,q)}$, it suffices to show that the leading and constant coefficients of $h'_{(p,q)}$ are both $\pm 1$.

First, if $k$ and $l$ are non-negative integers then, as a Laurent polynomial in $\sigma_2$, the term of highest degree in $\sigma_2^k ( \sigma_2 +1-\sigma_2^{-1})^l$ is $\pm \sigma_2^{k+l}$ and its term of lowest degree is $\pm \sigma_2^{k-l}$.  So, using Lemma \ref{able}, the term of highest degree in $\psi_\ast t_d$ is $\pm \sigma_2^d$ and its term of lowest degree is $\pm \sigma_2^{-d}$.

Now suppose that $(p,q) \in R_1$, for $R_1$ as defined in the proof of Lemma \ref{cable}.  From (\ref{R1}), 
\begin{align} \label{hR1} h_{(p,q)} = \sum_{j=0}^{p+q} {p+q \choose j} (-1)^j t_{q-p+j-1}.\end{align}
So,  $\psi_\ast h_{(p,q)} = \sigma_2^{-2q-1} h'_{(p,q)}$, where $h'_{(p,q)}$ is a polynomial in $\Z[\sigma_2]$ with leading term $\pm \sigma_2^{4q-2}$ and constant term $\pm 1$.  This established the lemma except in the case that $(p,q) \in R_2$.

Suppose $(p,q) \in R_2$, ie.~that $0\leq q<p$.  From (\ref{R2}),
\begin{align} \label{hR2}
h_{(p,q)} =  \sum_{j=0}^{p-q-1} {p+q \choose j} (-\sigma_2)^j  t_{p-q-1-j} - \sum_{j=p-q+1}^{p+q} {p+q \choose j} (-1)^j \sigma_2^{p-q} t_{j-(p-q-1)}.
\end{align}
The term of maximal degree in the Laurent polynomial $\sigma_2^j \psi_\ast t_{p-q-j-1}$ is $\pm \sigma_2^{p-q-1}$ and the term of minimal degree is $\pm \sigma_2^{2j+q-p+1}$.  With an eye on the second sum in (\ref{hR2}), notice that the term of maximal degree in the Laurent polynomial $\sigma_2^{p-q} \psi_\ast t_{j+q-p-1}$ is $\pm \sigma_2^{j-1}$ and the term of minimal degree is $\pm \sigma_2^{2p-2q+1-j}$.  

Provided that $q\neq 0$, the term of maximal degree in the Laurent polynomial $\psi_\ast h_{(p,q)}$ is $\pm \sigma_2^{p+1-q}$.  If $q=0$ then the term of maximal degree in this Laurent polynomial is 
\[\sigma_2^{p-1} \sum_0^{p-1} { p \choose j} (-1)^j = (-1)^{p-1}.\]
So, regardless of $q$, the leading coefficient of the polynomial numerator of $\psi_\ast h_{(p,q)}$ is $\pm 1$.

If we apply $\psi_\ast$ to the first sum in (\ref{hR2}), the term of minimal degree in the resulting Laurent polynomial is the $j=0$ term, $\pm \sigma_2^{q-p+1}$.  For the second sum, the term of minimal degree is the $j=p+q$ term, $\pm \sigma_2^{p-3q+1}$.  The degrees of these two terms are distinct unless $p=2q$.   
\end{proof}

\begin{remark} In the excluded case $p=2q$,  both of the sums in (\ref{hR2}) have a term which contributes to the constant term of $h'_{(p,q)}$.  These terms may have the same sign, whence $\sigma_2$ is an algebraic integer divisor of two, or they may be opposite, in which case analysis becomes more difficult.\end{remark}

\begin{remark} \label{no}
In the case that $(p,q)=(1,1)$, the formula in the above proof gives $h_{(1,1)}=\sigma_1-2$.  So, by Remark \ref{cusp field} and the fact that the cusp field for a hyperbolic orbifold cannot be real, the $(1,1)$-filling of $c_2$ is not hyperbolic.
\end{remark}

Since hidden symmetries give information about the cusp parameter $\sigma_1$, we need a robust connection between values of $\sigma_1$ and $\sigma_2$. The next lemma gives a tool for finding one.

\begin{lem}\label{fable} 
Suppose $h \in \Q[\sigma_1, \sigma_2]$ is not represented in $\C[U_0]=\Q[\sigma_1, \sigma_2]/(f)$ by a polynomial in $\Q[\sigma_1]$.  If $(\sigma_1, \sigma_2) \in U_0$ satisfies $h$ then $\Q(\sigma_1)= \Q(\sigma_2)$.
\end{lem}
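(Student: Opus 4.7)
The strategy is to reduce $h$ modulo $f$ into a canonical form that is linear in $\sigma_2$ with coefficients in $\Q[\sigma_1]$, and then read off the desired rational expression for $\sigma_2$ in terms of $\sigma_1$ at the vanishing locus.

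Viewing $f = 1 + \sigma_1\sigma_2 - \sigma_2 - \sigma_2^2$ as a polynomial in $\sigma_2$ over $\Q[\sigma_1]$, it has degree $2$ and is (up to sign) monic in $\sigma_2$. Hence $\Q[\sigma_1,\sigma_2]/(f)$ is a free $\Q[\sigma_1]$-module of rank two with basis $\{1,\sigma_2\}$, and the division relation $\sigma_2^2 \equiv (\sigma_1 - 1)\sigma_2 + 1 \pmod{f}$ lets us reduce any element of $\Q[\sigma_1,\sigma_2]$ modulo $f$ to a unique representative of the form $a(\sigma_1) + b(\sigma_1)\sigma_2$ with $a,b \in \Q[\sigma_1]$. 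The hypothesis that $h$ is not represented by a polynomial in $\Q[\sigma_1]$ modulo $f$ thus translates precisely to $b \neq 0$ as a polynomial in $\Q[\sigma_1]$.

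Next I would record two facts about $U_0$ that hold for every point on it. First, $\sigma_2 \neq 0$, since $f(\sigma_1,0) = 1 \neq 0$. Second, solving $f=0$ for $\sigma_1$ (which is possible because $f$ is linear in $\sigma_1$ with leading coefficient $\sigma_2 \neq 0$) gives $\sigma_1 = \sigma_2 + 1 - \sigma_2^{-1}$, so $\sigma_1 \in \Q(\sigma_2)$ and $\Q(\sigma_1) \subseteq \Q(\sigma_2)$ automatically. It thus suffices to prove the reverse inclusion.

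Now suppose $(\sigma_1,\sigma_2) \in U_0$ satisfies $h(\sigma_1,\sigma_2) = 0$. Since $h \equiv a(\sigma_1) + b(\sigma_1)\sigma_2 \pmod f$ and $f$ vanishes at $(\sigma_1,\sigma_2)$, we obtain $a(\sigma_1) + b(\sigma_1)\sigma_2 = 0$. Provided $b(\sigma_1) \neq 0$ in $\C$, we may solve $\sigma_2 = -a(\sigma_1)/b(\sigma_1) \in \Q(\sigma_1)$, yielding $\Q(\sigma_2) \subseteq \Q(\sigma_1)$ and completing the proof.

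The only obstacle is the possibility that $b(\sigma_1) = 0$ at this particular value. Since $b$ is a nonzero polynomial in $\Q[\sigma_1]$, this can occur for only finitely many $\sigma_1 \in \C$, each algebraic over $\Q$; if it does occur then also $a(\sigma_1) = 0$, placing $\sigma_1$ among the common roots of $a$ and $b$. I expect this degenerate set to be handled separately in the applications (for instance in \Cref{aiu} by choosing $h = h_{(p,q)}$ whose reduction mod $f$ is known to avoid such coincidences), so the main content of the argument is the rank-two reduction described above.
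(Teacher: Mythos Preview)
Your approach is essentially identical to the paper's: both reduce $h$ modulo $f$ to a representative of $\sigma_2$-degree at most one (the paper phrases this as multivariable polynomial division under the lex order $\sigma_2 > \sigma_1$), note that the hypothesis forces this degree to be exactly one, and then solve $a(\sigma_1) + b(\sigma_1)\sigma_2 = 0$ for $\sigma_2$ as a rational function of $\sigma_1$. The inclusion $\Q(\sigma_1)\subseteq\Q(\sigma_2)$ is obtained the same way in both.

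The caveat you raise---that $b(\sigma_1)$ might vanish at the particular point even though $b$ is nonzero as a polynomial---is a genuine edge case, and in fact the paper's own proof glosses over it (it simply says ``$r=0$ can be solved for $\sigma_2$''). As stated, the lemma does fail in such degenerate cases: e.g.\ $h=\sigma_1\sigma_2$ is not represented mod $f$ by an element of $\Q[\sigma_1]$, yet the point $(\sigma_1,\sigma_2)=(0,\tfrac{-1+\sqrt{5}}{2})\in U_0$ satisfies $h=0$ while $\Q(\sigma_1)=\Q\ne\Q(\sqrt{5})=\Q(\sigma_2)$. So you are right that this must be handled in the applications; your proof is, if anything, more careful than the paper's on this point.
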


\proof
We've already seen that the equation $f(\sigma_1,\sigma_2)=0$ can be used to write $\sigma_1$ as a rational function in $\sigma_2$. Since $U_0$ is the zero locus of $f$, it follows for any $(\sigma_1,\sigma_2)\in U_0$ that $\mathbb{Q}(\sigma_1)\subseteq\mathbb{Q}(\sigma_2)$.

Recall that $f$ has $\sigma_2$-degree two.  For $h$ as above, use multivariable polynomial division under
the lexicographic ordering with $\sigma_2>\sigma_1$ to divide $h$ by $f$. (For background on this, see Section 3 in Chapter 2 of \cite{CLO}.) The polynomial remainder $r$ represents $h$ in $\C[U_0]$ and has $\sigma_2$-degree at most one.  By assumption, this degree cannot be zero.  Therefore, $r=0$ can be solved for $\sigma_2$ expressing $\sigma_2$ as a rational function in $\sigma_1$.    
\endproof

Using \Cref{fable} we establish the second key pillar of our argument.

\begin{prop}\label{carrie} 
Suppose $(p,q)$ is a pair of non-negative integers with $(p,q) \neq (1,1)$ and $q \neq 0$.  If $(\sigma_1, \sigma_2) \in U_0$ satisfies $h_{(p,q)}$ then $\mathbb{Q}(\sigma_1)=\mathbb{Q}(\sigma_2)$.\end{prop}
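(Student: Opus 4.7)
The plan is to invoke \Cref{fable}, reducing the task to showing that, for each allowed $(p,q)$, the image of $h_{(p,q)}$ in $\C[U_0]$ is not represented by any polynomial in $\Q[\sigma_1]$. Using $\sigma_2^2 \equiv 1 + (\sigma_1 - 1)\sigma_2 \pmod{f}$, this image can be written uniquely as $A_{(p,q)}(\sigma_1) + B_{(p,q)}(\sigma_1)\sigma_2$, so the task becomes showing $B_{(p,q)} \not\equiv 0$ as a polynomial in $\sigma_1$.

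I first handle $(p,q)=(0,1)$ directly: $h_{(0,1)} = t_0 - t_1 = 1 - \sigma_1$, so any $(\sigma_1,\sigma_2) \in U_0$ where it vanishes has $\sigma_1 = 1$ and hence $\sigma_2 = \pm 1$ from $f=0$, giving $\Q(\sigma_1) = \Q(\sigma_2) = \Q$. For every other $(p,q)$ allowed by the hypothesis, I reduce $h_{(p,q)}$ modulo $f$ and track the leading $\sigma_1$-term of its $\sigma_2$-coefficient. Starting from $t_0 = 1$ and $t_1 = \sigma_1$, the recursion $t_d = \sigma_1 t_{d-1} - \sigma_2 t_{d-2}$ together with $\sigma_2^2 \equiv 1 + (\sigma_1-1)\sigma_2$ yields reductions $t_d \equiv a_d + b_d\sigma_2$ and $\sigma_2^k \equiv c_k + d_k \sigma_2$, each governed by a linear recurrence in $\Q[\sigma_1]$. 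A routine induction shows that $a_d$ has $\sigma_1$-degree $d$ with leading coefficient $1$, $b_d$ has $\sigma_1$-degree $d-2$ with leading coefficient $-(d-1)$ for $d \geq 2$, and $d_k$ has $\sigma_1$-degree $k-1$ with leading coefficient $1$ for $k \geq 1$.

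The leading $\sigma_1$-term of $B_{(p,q)}$ can now be extracted. For $(p,q) \in R_1$ with $q \geq 2$, formula \eqref{hR1} gives $B_{(p,q)} = \sum_{j=0}^{p+q}\binom{p+q}{j}(-1)^j b_{q-p+j-1}$; since the degree $q-p+j-3$ of $b_{q-p+j-1}$ is strictly maximal at $j = p+q$, the leading term of $B_{(p,q)}$ is $(-1)^{p+q+1}(2q-2)\sigma_1^{2q-3} \neq 0$. For $(p,q) \in R_2$ with $q \geq 1$, a similar analysis of \eqref{hR2} shows that the $\sigma_2$-coefficient of $\sigma_2^{p-q} t_{j+q-p-1}$ modulo $f$ has $\sigma_1$-degree $j-2$, with the top-degree contribution coming uniquely from the cross-term $d_{p-q}\,a_{j+q-p-1}$ of leading coefficient $1$; the $j = p+q$ summand of the second sum then dominates (since $p+q-2 > p-q-2$ whenever $q\geq 1$) and gives $B_{(p,q)}$ the leading term $(-1)^{p+q+1}\sigma_1^{p+q-2} \neq 0$. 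With $B_{(p,q)} \not\equiv 0$ established in every case, \Cref{fable} yields $\Q(\sigma_1) = \Q(\sigma_2)$. The main obstacle is the $R_2$ bookkeeping: four cross-terms from $(c_k + d_k\sigma_2)(a_d + b_d\sigma_2) \bmod f$ nominally contribute to the $\sigma_2$-coefficient at each $j$, and one must verify that the top-degree contribution $d_{p-q} a_{j+q-p-1}$ survives cancellation against lower-$j$ terms of the second sum and against all terms of the first sum.
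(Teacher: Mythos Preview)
Your approach is correct and reaches the same conclusion as the paper, but the route is genuinely different. Both proofs reduce to \Cref{fable}: one must show that $h_{(p,q)}$ is not congruent modulo $f$ to an element of $\Q[\sigma_1]$. The paper works with the $\sigma_1$-expansion $h=\sum_j\sigma_1^j\ell_j$, where $\ell_j\in\Q[\sigma_2]$, and develops an abstract degree criterion (\Cref{crit} and \Cref{consequences}) that any hypothetical $g$ with $h-gf\in\Q[\sigma_1]$ would have to satisfy; a cascade of degree equalities then forces a contradiction. For the $R_2$ case the paper invokes the symmetry $(w,x)\mapsto(1-w,1-x)$ to replace $h_{(p,q)}$ by a more tractable polynomial before applying the criterion. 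You instead reduce $h_{(p,q)}$ explicitly modulo $f$ using $\sigma_2^2\equiv 1+(\sigma_1-1)\sigma_2$, write the result as $A_{(p,q)}(\sigma_1)+B_{(p,q)}(\sigma_1)\sigma_2$, and show $B_{(p,q)}\not\equiv 0$ by isolating its leading $\sigma_1$-term via the recursions $t_d=\sigma_1t_{d-1}-\sigma_2t_{d-2}$ and $\sigma_2^k\equiv d_{k-1}+\bigl(d_{k-2}+(\sigma_1-1)d_{k-1}\bigr)\sigma_2$.

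Your method is more hands-on and has the virtue of treating $R_1$ and $R_2$ uniformly without the symmetry substitution; the degree bookkeeping you outline does check out (in particular, the dominant contribution $d_{p-q}a_{j+q-p-1}$ at $j=p+q$ in the $R_2$ case has $\sigma_1$-degree $p+q-2$, strictly exceeding the $p-q-2$ bound from the first sum when $q\geq 1$, so no cancellation occurs). The paper's approach, by contrast, builds reusable structural lemmas that could in principle be applied to other polynomials without re-deriving recursions, and its use of the $(w,x)\mapsto(1-w,1-x)$ symmetry is a nice observation in its own right. Either argument is complete; yours is arguably the more elementary.
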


To prove Proposition \ref{carrie}, it will helpful to have the following degree criterion for when Lemma \ref{fable} applies.

\begin{lem} \label{crit}
Take $f=1+\sigma_1 \sigma_2 -\sigma_2-\sigma_2^2$ and suppose $h,g \in \Q[\sigma_1, \sigma_2]$ with $h-gf \in \Q[\sigma_1]$.  Write
\[ g=\sum_{j=0}^{m-1} \sigma_1^j k_j \quad \text{and} \quad h=\sum_{j=0}^{n} \sigma_1^j \ell_j \]
where $k_j, \ell_j\in\mathbb{Q}[\sigma_2]$ for all relevant $j$, and $k_{m-1}$ and $\ell_n$ are non-zero.  Then $n \geq m$, and
\begin{enumerate}[label=(\roman*)]
\item the leading terms of $\ell_0$ and $\left(1-\sigma_2-\sigma_2^2\right) k_0$ are equal,
 \item the leading terms of $\sigma_2 k_{j-1}$ and $\ell_j - (1-\sigma_2-\sigma_2^2)k_j$ are equal whenever  $1 \leq j \leq m-1$, 
\item the leading terms of $\ell_m$ and $\sigma_2 k_{m-1}$ are equal, and
\item $\deg(\ell_j) =0$ for $j \geq m+1$.
\end{enumerate}
In particular, the polynomials compared in each of conditions (i)-(iii) have equal degrees.
\end{lem}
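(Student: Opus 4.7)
The plan is a direct polynomial bookkeeping argument. The hypothesis $h - gf \in \mathbb{Q}[\sigma_1]$ means that when $h - gf$ is expanded as an element of $\mathbb{Q}[\sigma_2][\sigma_1]$, every coefficient of $\sigma_1^j$ lies in $\mathbb{Q}$, i.e.~has $\sigma_2$-degree zero. So my strategy is to compute the $\sigma_1^j$-coefficient of $gf$ explicitly, match it against $\ell_j$, and read off the degree information.

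First I would split $f = (1 - \sigma_2 - \sigma_2^2) + \sigma_1\sigma_2$ and distribute to obtain
\begin{align*}
gf = \sum_{j=0}^{m-1} \sigma_1^j (1-\sigma_2-\sigma_2^2) k_j + \sum_{j=1}^{m} \sigma_1^j \sigma_2\, k_{j-1}.
\end{align*}
Reading off the coefficient of $\sigma_1^j$ in $gf$, one finds: $(1-\sigma_2-\sigma_2^2) k_0$ when $j=0$; $(1-\sigma_2-\sigma_2^2) k_j + \sigma_2 k_{j-1}$ when $1 \leq j \leq m-1$; $\sigma_2 k_{m-1}$ when $j = m$; and $0$ when $j > m$. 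Subtracting from the coefficient $\ell_j$ of $\sigma_1^j$ in $h$ and imposing that the result lies in $\mathbb{Q}$ gives, for each $j$, an identity of the form ``$\ell_j - (\text{explicit combination of } k_j, k_{j-1}) \in \mathbb{Q}$.''

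Next I would establish $n \geq m$ by focusing on the $\sigma_1^m$ coefficient. Since $k_{m-1}$ is nonzero by hypothesis, $\sigma_2 k_{m-1}$ has $\sigma_2$-degree at least one, hence is not in $\mathbb{Q}$. If one had $n < m$, the $\sigma_1^m$-coefficient of $h$ would be $0$, forcing $-\sigma_2 k_{m-1} \in \mathbb{Q}$, a contradiction. So $n \geq m$, $\ell_m \neq 0$, and constancy of $\ell_m - \sigma_2 k_{m-1}$ forces the leading terms of $\ell_m$ and $\sigma_2 k_{m-1}$ to coincide, giving (iii).

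The remaining parts fall out of the same matching. For $j > m$, $gf$ contributes nothing to the $\sigma_1^j$-coefficient, so $\ell_j \in \mathbb{Q}$, giving (iv). For $j = 0$, constancy of $\ell_0 - (1-\sigma_2-\sigma_2^2) k_0$ forces the leading terms of $\ell_0$ and $(1-\sigma_2-\sigma_2^2) k_0$ to match, giving (i); and for $1 \leq j \leq m-1$, constancy of the full expression rearranged as $[\ell_j - (1-\sigma_2-\sigma_2^2) k_j] - \sigma_2 k_{j-1}$ matches leading terms across the bracket, giving (ii). There is no serious obstacle here beyond careful bookkeeping of $\sigma_2$-degrees; the only subtle point is the degenerate cases in which some $k_j$ or $\ell_j$ vanishes, where the matching of leading terms is read as ``both sides are zero'' and the asserted equality of degrees holds trivially.
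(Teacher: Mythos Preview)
Your proposal is correct and follows essentially the same approach as the paper's proof: both expand $gf$ by writing $f=(1-\sigma_2-\sigma_2^2)+\sigma_1\sigma_2$, collect coefficients of $\sigma_1^j$, and use the constancy of $\ell_j - (\text{coefficient of }\sigma_1^j\text{ in }gf)$ to read off (i)--(iv) and the bound $n\geq m$. Your write-up is simply more explicit than the paper's terse version, in particular in justifying $n\geq m$ via the nonvanishing of $\sigma_2 k_{m-1}$.
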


%\jason{***I strengthened the statement a bit for use in the argument at the end of Prop. 6.10.***}

\proof
We have
\[ gf = (1-\sigma_2-\sigma_2^2) k_0 + \sum_{j=1}^{m-1} \sigma_1^j \left[(1-\sigma_2-\sigma_2^2) k_j+\sigma_2k_{j-1}\right] +\sigma_1^{m} \sigma_2k_{m-1}.\]
Since $h-gf \in \Q[\sigma_1]$, we must have that $n \geq m$ and for each $j\leq n$, $\ell_j$ differs from the coefficient of $\sigma_1^j$ in $gf$ by a constant. Conditions (i) through (iv) follow.
\endproof

Lemma \ref{crit} has a couple of useful consequences.  

\begin{cor}\label{consequences}
For $g$ and $h$ as given in  Lemma \ref{crit} and a fixed $a<m$, suppose that for every $j\in \{0,1,\hdots, a-1\}$, $\deg(\ell_{a-j}) < 2 + j+\deg(k_a)$.   Then
\begin{enumerate}
\item For each $j\in\{0, 1,\hdots, a\}$, $\deg(k_{a-j}) = j+\deg(k_a)$ and
\item $\deg(\ell_0) = 2+a+\deg(k_a)$.
\end{enumerate}
\end{cor}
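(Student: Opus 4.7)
My approach is to prove (1) by a short finite induction on $j$, then deduce (2) in a single line from (1). The engine is Lemma \ref{crit}: parts (i) and (ii) identify the leading terms of $\ell_j - (1-\sigma_2-\sigma_2^2)k_j$ with those of $\sigma_2 k_{j-1}$, so they propagate degree information from $k_a$ downward to $k_0$, provided the $\ell_j$'s are small enough not to cancel the contribution of $(1-\sigma_2-\sigma_2^2)k_j$. Exactly this smallness is built into the hypothesis $\deg(\ell_{a-j}) < 2 + j + \deg(k_a)$.

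Setting $d = \deg(k_a)$, the base case $j = 0$ of (1) is tautological. For the inductive step, I would fix $0 \leq i < a$ and assume $\deg(k_{a-j}) = j + d$ for every $0 \leq j \leq i$. Since $a < m$, the index $a - i$ lies in $[1, m-1]$, so Lemma \ref{crit}(ii) applies there and equates the leading term of $\sigma_2 k_{a-i-1}$ with that of $\ell_{a-i} - (1-\sigma_2-\sigma_2^2)k_{a-i}$. The inductive hypothesis gives $\deg((1-\sigma_2-\sigma_2^2)k_{a-i}) = d + i + 2$, while the corollary's hypothesis gives $\deg(\ell_{a-i}) < d + i + 2$; so the $k_{a-i}$ contribution controls the leading term on the right. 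Reading off degrees then yields $\deg(\sigma_2 k_{a-i-1}) = d + i + 2$, i.e.\ $\deg(k_{a-i-1}) = d + i + 1$, advancing the induction and finishing (1).

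For (2), specializing part (1) at $j = a$ gives $\deg(k_0) = a + d$, so Lemma \ref{crit}(i) directly yields $\deg(\ell_0) = \deg\bigl((1-\sigma_2-\sigma_2^2)k_0\bigr) = 2 + a + d$.

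I do not anticipate a genuine obstacle; the argument is bookkeeping with leading coefficients. The only subtlety is a harmless one: whenever I invoke ``equal leading terms implies equal degrees,'' I need both sides to be non-zero, which is automatic once the inductive hypothesis forces the relevant $k_{a-i}$ to have positive degree (and hence to be non-zero). The range constraint $a < m$ is used precisely to keep us inside the hypothesis of Lemma \ref{crit}(ii), safely away from the boundary case (iii).
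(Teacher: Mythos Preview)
Your proposal is correct and follows essentially the same route as the paper: induction on $j$ using Lemma~\ref{crit}(ii) to step from $k_{a-i}$ to $k_{a-i-1}$, then Lemma~\ref{crit}(i) for conclusion (2). Your write-up is in fact a bit more explicit than the paper's about the index bookkeeping (checking $a-i\in[1,m-1]$ so that part (ii) applies) and about the trivial base case; the only minor slip is in your final parenthetical, where you say the inductive hypothesis forces $k_{a-i}$ to have ``positive degree''---what you actually need and have is that $\deg(k_{a-i})=i+d\ge 0$, i.e.\ that $k_{a-i}$ is nonzero, which suffices for the leading-term comparison.
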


%\jason{***I switched the roles of $j$ and $a-j$ above, which seems to help in both the proof and the application.***}

\proof We establish conclusion (1) by induction on $j$, using condition (ii) of Lemma \ref{crit}. In the base case $j=0$ for instance, since $\deg(\ell_a) < \deg(k_a)+2$ the top-degree terms of $\sigma_2^2k_a$ and $\sigma_2k_{a-1}$ must cancel. This implies in particular that $\deg(k_{a-1}) = \deg(k_a)+1$. The induction step is similar, and conclusion (2) here then follows from (1) using condition (i) from Lemma \ref{crit}. \endproof

%\jason{***Changes below either fill in detail or make the strategy more explicit.***}

\begin{proof}[Proof of Prop. \ref{carrie}]
First suppose that $(p,q) \in R_1$, ie.~that $0\leq p\leq q$, and $q\neq 0$.  Put $h=h_{(p,q)}$ as given in Equation (\ref{hR1}).  The case $p=q=1$ was excluded, and in the case $p=0$, $q=1$ a quick calculation shows for $(\sigma_1, \sigma_2)$ satisfying $h$ and $f$ that $\sigma_1=1$ and $\sigma_2=\pm 1$.  So the proposition holds for this (non-hyperbolic) surgery, and we may assume $q \geq 2$.  Now, as in Lemma \ref{crit}, write $h=\sum_0^n \sigma_1^j \ell_j$.  Using Lemma \ref{able} and the definition of $h$, we find that
\begin{enumerate}[label=(\roman*)]
\item $n=2q-1$,
\item $\deg(\ell_{2q-1})=\deg(\ell_{2q-2}) = 0$,
\item $\deg(\ell_{2q-3})=\deg(\ell_{2q-4})=1$, and %\jason{not sure why the latter assertion here was omitted, but I \textit{think} it's correct }\eric{I think I had removed it because I didn't think it was used in arguments that follow.  I think it is correct too, so please keep it if you think we need it.  Wow, that's a lot of thinks!}
% \jason{I think you are right, but I am still sort of inclined to keep it because of the pattern common to (i) (ii) and (iii) }and
\item for $0\leq j \leq q-2$, both $\deg(\ell_{2q-3-2j})$ and $\deg(\ell_{2q-4-2j})$ are at most $j+1$.
\end{enumerate}
If there were $g \in \Q[\sigma_1, \sigma_2]$ with $h-gf \in \Q[\sigma_1]$, then writing $g=\sum_0^{m-1} \sigma_1^j k_j$, by Lemma \ref{crit}(iii) and (iii) above we would have $m=2q-3$ and $\deg(k_{m-1})=0$. In this case Corollary \ref{consequences} would apply with $a=m-1$, by (iii) and (iv) above, and its conclusion (2) would give $\deg(l_0) = 2q-2$. But $\deg(\ell_0) \leq q-1$ by (iv) above, so we would have $q-1\geq  2+a = 2q-2$, contradicting that $q\geq 2$.  This completes the proof for $(p,q) \in R_1$.

In the case that $(p,q) \in R_2$, ie.~that $0<q<p$, we will exploit a symmetry of \Cref{surgery eqn} to trade out $h_{(p,q)}$ from \Cref{hR2} for a more convenient polynomial. If $(w,x)\in V_0$ satisfies \Cref{surgery eqn} then for $w' = 1-w$ and $x' = 1-x$, $(w',x')$ satisfies the same equation but with $p$ replaced by $-p$. Proceeding as in the proof of Lemma \ref{cable} yields the polynomial
\[ \sum_{j=0}^{p-q} { p-q \choose j} (-wx)^j  \, s_{p+q-1-j}. \]
with $(w',x')$ as a root. Thus for $t_d$ as in \Cref{able}, $(\sigma_1',\sigma_2') = \phi(w',x')$ must satisfy:
\[ h = \sum_{j=0}^{p-q} { p-q \choose j} (-\sigma_2)^j  \, t_{p+q-1-j}.\] 
Here $\sigma'_1=2-\sigma_1$ and $\sigma'_2=1-\sigma_1+\sigma_2 = \sigma_2^{-1}$, so $\Q(\sigma_1) = \Q(\sigma_2)$ if and only if $\Q(\sigma'_1) = \Q(\sigma'_2)$. We may therefore ignore the discrepancy and assume that $(\sigma_1, \sigma_2)$ satisfies $h$.

Writing $h = \sum_{j=0}^{p+q-1}\sigma_1^j\ell_j$, for $0 \leq j \leq p-q$ the leading term of $\ell_{p+q-1-j}$ is $(-1)^{j} {p-q \choose j } \sigma_2^j$. On the other hand, for $a=p+q-1 - (p-q+1) = 2q-2$ and any $j'\in \{0,1,\hdots,a\}$, $\ell_{a-j'}$ has degree at most $p-q + \lceil j'/2\rceil$. (The highest-degree terms of these $\ell_{a-j'}$ all come from either the $t_{2q-1}$ or the $t_{2q}$ term of $h$.) We intend to apply \Cref{consequences} with $a=2q-2$. For this we claim that if there exists a polynomial $g$ such that $h-gf\in\mathbb{Q}(\sigma_1)$, then writing $g = \sum_{j=0}^m \sigma_1^j k_j$ with $k_j\in\mathbb{Q}[\sigma_2]$ for all $j$, $k_a$ has degree $p-q-1$.

%Let $a=2q-2$.  By (\ref{hR2}) and Lemma \ref{able}, 
%\begin{enumerate}[label=(\roman*)]
%\item If $0 \leq j \leq a+1$ then $\deg(\ell_j) \leq p-1-\lfloor j/2 \rfloor$
%\end{enumerate}
For such a polynomial $g$, condition (iii) of Lemma \ref{crit} implies that $m = p+q-2$, and since $\ell_{p+q-2}-\sigma_2 k_{p+q-3} $ is constant, that $k_{p+q-3} = {p+q\choose 1}$. Now using \Cref{crit}(ii) and the fact that the leading term of $\ell_{p+q-1-j}$ is $(-1)^{j} {p-q \choose j } \sigma_2^j$ for $1 \leq j \leq p-q$, induction (with $j=1$ as the base case) shows that the leading term of $k_{p+q-2-j}$ is 
\[ \sigma_2^{j-1} \, \sum_{i=1}^{j} (-1)^i {p-q \choose i}.\]
Taking $j=p-q$ and recalling that an alternating sum of binomial coefficients is zero, we find that the leading term of $k_{a}$ is $-\sigma_2^{p-q-1}$.  Thus its degree is $p-q-1$ as claimed, and it follows that $\deg (\ell_j) < 2+a-j+\deg(k_a)$ whenever $1 \leq j \leq a$.  Therefore by Corollary \ref{consequences}, \[p-1 \geq \deg(\ell_0) = 2+a+\deg(k_a) = p+q-1.\]
This is not possible because $q$ is positive.
\end{proof}

We now directly address the cases $q=0$ and $p=2q$ excluded in \Cref{aiu}.

\begin{lem}\label{gable} The $(1,0)$-filling of $c_2$ is non-hyperbolic; $\sigma_1 = 1+\sqrt{-3}$ at the $(2,0)$ filling; and $\sigma_1 = 1+2i$ at the $(3,0)$-filling. For each $p> 3$, $\mathbb{Q}(\sigma_1)$ is not quadratic imaginary at the $(p,0)$ filling of $c_2$.

The $(2,1)$-filling of $c_2$ is not hyperbolic. Otherwise, if $q>1$, $\mathbb{Q}(\sigma_1)$ is not quadratic imaginary at the $(2q,q)$-filling of $c_2$.\end{lem}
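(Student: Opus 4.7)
The plan is to handle the six claims of \Cref{gable} in two groups: direct computations for the specific fillings $(1,0)$, $(2,0)$, $(3,0)$, $(2,1)$, and a uniform degree-counting argument followed by finite case analysis for the infinite families $(p,0)$ with $p>3$ and $(2q,q)$ with $q>1$. The key identities in $\mathbb{C}[V_0]$ will be $\sigma_1-1 = \sigma_2 - 1/\sigma_2$ (immediate from $f=0$) and $A + A^{-1} = \sigma_1^2 - 4\sigma_2 + 2$, where $A = \mu(\mathfrak{m}_2)$; the latter follows after observing that the ostensible denominator $\sigma_2(1-\sigma_1+\sigma_2)$ equals $1$ on $V_0$.

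For the small cases I substitute the relevant surgery equation into $f=0$ and solve. For $(1,0)$, $A=1$ forces $w=x$, reducing $P=0$ to $x(1-x) = \pm 1$, whose solutions are either the complete structure or real (hence non-hyperbolic). For $(2,0)$, $A=-1$ gives $\sigma_1 = 2\sigma_2$ and then $\sigma_2^2-\sigma_2+1=0$, so $\sigma_1 = 1+\sqrt{-3}$. For $(3,0)$, $A + A^{-1} = -1$ combined with the key identity and $f=0$ yields $(\sigma_2-1)^2(\sigma_2^2+1)=0$, whose non-trivial root $\sigma_2=i$ gives $\sigma_1 = 1+2i$. For $(2,1)$, the non-trivial factor of $A^2B=1$ reduces to $\sigma_2 = 1/(3-\sigma_1)$, and $f=0$ then gives the linear equation $2\sigma_1 = 5$; since $\sigma_1 = 5/2 \in \mathbb{R}$, the filling is non-hyperbolic (toroidal by \Cref{tab:fillings}).

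For $(p,0)$ with $p>3$, suppose $\mathbb{Q}(\sigma_1)$ is imaginary quadratic. From $f=0$, $\sigma_2$ satisfies a quadratic over $\mathbb{Q}(\sigma_1)$, so $[\mathbb{Q}(\sigma_2):\mathbb{Q}] \leq 4$. At the hyperbolic filling $A = e^{2\pi i/p}$, so $\beta := A + A^{-1} = 2\cos(2\pi/p)$ has degree $\phi(p)/2$ over $\mathbb{Q}$. Because $\beta \in \mathbb{Q}(\sigma_2)$ is real while $\mathbb{Q}(\sigma_1)$ is imaginary, $\mathbb{Q}(\beta) \cap \mathbb{Q}(\sigma_1) = \mathbb{Q}$, hence $[\mathbb{Q}(\beta,\sigma_1):\mathbb{Q}] = \phi(p) \leq 4$, forcing $p \in \{4,5,6,8,10,12\}$. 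Setting $u = \sigma_2 + 1/\sigma_2$ yields $u^2 - 2u - 1 = \beta$ and $(\sigma_1-1)^2 = u^2-4$, giving the explicit formula $(\sigma_1-1)^2 = (\beta-1) \pm 2\sqrt{\beta+2}$. For $\mathbb{Q}(\sigma_1)$ to be imaginary quadratic, $(\sigma_1-1)^2$ must be a negative rational; this fails case-by-case because $\sqrt{\beta+2}$ is irrational for $p \in \{4,6\}$, not in $\mathbb{Q}(\beta)$ for $p \in \{8,10,12\}$, and even though $\sqrt{\beta+2} \in \mathbb{Q}(\sqrt{5}) = \mathbb{Q}(\beta)$ for $p=5$, both combinations retain nonzero $\sqrt{5}$ terms.

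For $(2q,q)$ with $q>1$, a parallel argument uses $\eta := A^2B$ (a primitive $q$-th root of unity at the hyperbolic orbifold filling with cone angle $2\pi/q$) and $\delta := \eta + \eta^{-1}$. Setting $M = x(1-w)^3$ and $N = w(1-x)^3$, I compute $MN = 1/\sigma_2^2$ and $M+N = (-\sigma_2^4 + 3\sigma_2^3 - \sigma_2^2 - 2)/\sigma_2$ on $V_0$ (after using $\sigma_1 = \sigma_2+1-1/\sigma_2$), yielding the identity $\delta+2 = (\sigma_2^4 - 3\sigma_2^3 + \sigma_2^2 + 2)^2$. The same degree-counting bound forces $\phi(q) \leq 4$, restricting $q \in \{2,3,4,5,6,8,10,12\}$; for each I analyze the quartic $\sigma_2^4 - 3\sigma_2^3 + \sigma_2^2 + 2 = \pm\sqrt{\delta+2}$, showing either that it is irreducible over the relevant ground field (so coefficient comparison between its expansion and the quartic induced by a hypothetical rational quadratic for $\sigma_1$ rules out $[\mathbb{Q}(\sigma_1):\mathbb{Q}]=2$) or that its rational roots give the complete structure or a real $\sigma_1$. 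The main obstacle is the finite case verification for $q \in \{4,5,8,10,12\}$, where the quartics live over real quadratic fields and require irreducibility checks via resolvent cubics or direct factorization attempts.
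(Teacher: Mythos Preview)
Your approach is essentially correct but differs from the paper's in two ways that make yours considerably more laborious.

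First, where you work with $\beta = A + A^{-1} = 2\cos(2\pi/p)$ (and similarly $\delta = 2\cos(2\pi/q)$), the paper takes a square root of the holonomy derivative, obtaining $x(1-w) + w(1-x) = \sigma_1 - 2\sigma_2 = 2\cos(\pi/p)$ and hence the \emph{quadratic} $\sigma_2^2 + (2\cos(\pi/p)-1)\sigma_2 + 1 = 0$. The degree bound then reads $\varphi(2p) > 4$ for $p>6$, leaving only $p\in\{4,5,6\}$ to check by hand. Your bound $\phi(p)\le 4$ leaves $p\in\{4,5,6,8,10,12\}$, and while your closed form $(\sigma_1-1)^2 = (\beta-1)\pm 2\sqrt{\beta+2}$ is pleasant, it forces extra case checks.

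Second, and more significantly, for the $(2q,q)$ family the paper invokes \Cref{carrie} to get $\mathbb{Q}(\sigma_1)=\mathbb{Q}(\sigma_2)$ at the outset. Combined with the square-root trick this gives the single quartic $2\cos(\pi/q) + \sigma_2^4 - 3\sigma_2^3 + \sigma_2^2 + 2 = 0$; then for $q>6$ one simply notes $\mathbb{Q}(\sigma_2)\supseteq\mathbb{Q}(\cos(\pi/q))$ has degree $>2$, and for $q\in\{4,5,6\}$ that if $\mathbb{Q}(\sigma_2)$ were quadratic it would equal the real field $\mathbb{Q}(\cos(\pi/q))$. Only $q\in\{2,3\}$ need an irreducibility check. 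Your route, by not invoking \Cref{carrie}, leaves eight values $q\in\{2,3,4,5,6,8,10,12\}$ and for each requires comparing the filling quartic with the quartic $\sigma_2^4 + (2+r)\sigma_2^3 + (-1+r+s)\sigma_2^2 + (-2-r)\sigma_2 + 1 = 0$ induced by a hypothetical rational quadratic for $\sigma_1$; this works, but the ``irreducibility checks via resolvent cubics'' you defer for $q\in\{4,5,8,10,12\}$ are genuine work you should carry out explicitly.

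In short: both proofs follow the same strategic arc (the surgery equation forces a root of unity, hence a cyclotomic constraint, hence a degree bound), but the paper's square-root trick and its use of \Cref{carrie} shrink the finite residue to a handful of easy checks, whereas your self-contained argument trades that dependence for a longer and more delicate case analysis.
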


\begin{proof} 
Assume that $(w,x) \in V_0$ corresponds to $(p,q)$ filling of $c_2$.  To start, suppose that $p>0$ and $q=0$.  From the expressions for $\mu(\mathfrak{m}_2)$ above \Cref{surgery eqn}, \Cref{surgery eqn}
is equivalent to  $ \mu(\mathfrak{m}_2)^p =1$.
Hence, $\mu(\mathfrak{m}_2)  = \cos(2\pi/p)\pm i\sin(2\pi/p)$ is a primitive $p$th root of unity $\zeta_p$.  Using the alternative descriptions $\mu(\mathfrak{m}_2) = x^2(1-w)^2 = 1/(w^2(1-x)^2)$ given above  \Cref{surgery eqn} and taking a square root yields:
\[ x(1-w)+ w(1-x) = \sigma_1 - 2\sigma_2 = \zeta_{2p} + \zeta_{2p}^{-1} = 2\cos(\pi/p). \]
Since $\sigma_1 = \sigma_2 + 1 - 1/\sigma_2$, this shows that $\cos(\pi/p)\subset\mathbb{Q}(\sigma_2)$.  If we apply $\psi_\ast$ to this formula and clear denominators, we obtain
\[ 0 = \sigma_2^2 + (2\cos(\pi/p) - 1)\sigma_2 + 1. \]
So $\mathbb{Q}(\sigma_2)$ has degree at most two over $\mathbb{Q}(\cos(\pi/p))$. At any hyperbolic Dehn filling of $c_2$ the degree is exactly two, since in this case $\sigma_1\subset\mathbb{Q}(\sigma_2)$ is non-real. This rules out $p=1$, where the polynomial above has two real roots. At $p=2$ we have $\sigma_2 = \frac{1}{2}(1\pm\sqrt{-3})$ and $\sigma_1 = 2\sigma_2$. And at $p = 3$ we have $\sigma_2 = \pm i$ and hence $\sigma_1 = 1\pm 2i$. These correspond to positively and negatively oriented structures on the corresponding surgeries.

In the cases $p=4,5,6$ where $\mathbb{Q}(\cos(\pi/p))$ has degree two over $\mathbb{Q}$, the quadratic formula yields an explicit formula for $\sigma_2$, hence an explicit formula for $\sigma_1 = 2(1+\cos(\pi/p))$.  This can be used to show that $\mathbb{Q}(\sigma_1)=\mathbb{Q}(\sigma_2)$ with degree four over $\mathbb{Q}$. 

For $p>6$, we observe that $\varphi(2p)>4$, where $\varphi$ is the Euler $\varphi$-function. Appealing to standard properties of cyclotomic fields, it then follows that $[\Q(\cos(\pi/p)):\Q]>2$. We may assume that the degree of $\Q(\sigma_2)$ over $\Q(\cos(\pi/p))$ is two, since otherwise $\Q(\sigma_2)$ is real and so $\Q(\sigma_1)\subset \Q(\sigma_2)$ cannot be quadratic imaginary. But then it follows that $[\Q(\sigma_2):\Q]=[\Q(\sigma_2):\Q(\cos(\pi/p))][\Q(\cos(\pi/p)):\Q]>4$, so $[\Q(\sigma_1):\Q]>2$ since the degree of $\Q(\sigma_2)$ over $\Q(\sigma_1)$ is at most two. Thus $\Q(\sigma_1)$ cannot be quadratic imaginary.

It remains to consider the surgeries $(2q,q)$.   By Proposition \ref{carrie}, $\Q(\sigma_1)=\Q(\sigma_2)$, so we need only show that $\Q(\sigma_2)$ is not quadratic imaginary.  
First, $\psi_\ast h_{(2,1)} = (2-\sigma_2)\sigma_2$ has only rational solutions, so the $(2,1)$ filling is not hyperbolic.  Therefore, we may assume that $q\geq 2$.  
Since the surgery coefficients are taken with respect to the basis $\{\mathfrak{m}_2,\mathfrak{m}_2^{-1}\mathfrak{l}_2\}$, \Cref{surgery eqn} is equivalent to $\mu(\mathfrak{m}_2 \mathfrak{l}_2)^q=1$ in this case.  In the function field $\C[w,x]/(P)$ for $V_0$, 
\[ \mu(\mathfrak{m}_2 \mathfrak{l}_2) = \left( \frac{1-w}{w(1-x)^2} \right)^2 = \left( \frac{x(1-w)^2}{1-x} \right)^2.\]
So as in the previous case,
\[ 2 \cos(\pi/q) = \frac{x(1-w)^2}{1-x} + \frac{w(1-x)^2}{1-w}.\]
Using $\phi_\ast$ and $\psi_\ast$ as above, we find that the polynomial
\begin{align} \label{cosine} 2 \cos(\pi/q)  +\sigma_2^4-3\sigma_2^3+\sigma_2^2+2\end{align}
must be zero.  Therefore, $\Q(\sigma_2)$ is an extension of $\Q(\cos(\pi/q))$ of degree at most 4.

In the cases $q=2,3$ in which $\cos(\pi/q)\in\mathbb{Q}$, it is straightforward to check that the polynomial (\ref{cosine}) is irreducible over $\mathbb{Q}(\cos(\pi/q))$, and it follows that $\Q(\sigma_2)$ is not quadratic imaginary.  In the cases $q = 4,5,6$, $\mathbb{Q}(\cos(\pi/q))$ is quadratic over $\mathbb{Q}$. If $\Q(\sigma_2)$ is quadratic, then $\Q(\sigma_2)=\Q(\cos(\pi/q))\subset \R$, so it is not imaginary. Thus $\Q(\sigma_2)$ cannot be quadratic imaginary. When $q>6$ we once again have $\phi(2q)>4$, so that $[\Q(\cos(\pi/q)):\Q]>2$ and consequently $\Q(\sigma_2)$ cannot be quadratic since it contains $\Q(\cos(\pi/q))$.

\end{proof}

\begin{cor}\label{gary} The only fillings of $c_2$ for which $\mathbb{Q}(\sigma_1)$ is quadratic imaginary are the $(2,0)$ filling, where $\sigma_1 = 1+\sqrt{-3}$, and the $(3,0)$ filling, where $\sigma_1 = 2i+1$.\end{cor}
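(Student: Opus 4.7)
The plan is to reduce to a finite case analysis by combining \Cref{gable}, \Cref{aiu}, and \Cref{carrie}. The first step is to invoke \Cref{gable} to handle fillings with $q = 0$ or $p = 2q$: among these, it shows that only $(2,0)$ (giving $\sigma_1 = 1+\sqrt{-3}$) and $(3,0)$ (giving $\sigma_1 = 1+2i$) yield quadratic imaginary $\mathbb{Q}(\sigma_1)$. The remark \Cref{no} eliminates $(p,q) = (1,1)$ as non-hyperbolic. So I need only show that no other hyperbolic filling, i.e., none with $q \neq 0$, $p \neq 2q$, and $(p,q)\neq (1,1)$, produces quadratic imaginary $\mathbb{Q}(\sigma_1)$.

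For any such filling, \Cref{aiu} gives that $\sigma_2$ is a unit algebraic integer, while \Cref{carrie} gives $\mathbb{Q}(\sigma_1) = \mathbb{Q}(\sigma_2)$. Assuming this common field is quadratic imaginary, $\sigma_2$ is a non-real unit in the ring of integers of an imaginary quadratic field; the only such fields with non-real units are $\mathbb{Q}(i)$ and $\mathbb{Q}(\sqrt{-3})$, producing the six-element list $\sigma_2 \in \{\pm i, \pm \omega, \pm \omega^2\}$ where $\omega = e^{2\pi i/3}$. For each candidate, the equation $f = 0$ determines $\sigma_1 = \sigma_2 + 1 - 1/\sigma_2$, and the pair $(w, x)$ is then determined up to the symmetry $w \leftrightarrow x$ as the roots of $t^2 - \sigma_1 t + \sigma_2$.

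Two of the six cases, $\sigma_2 = \omega$ and $\sigma_2 = \omega^2$, yield zero discriminant, giving $w = x = (1\pm\sqrt{-3})/2$; these are the two orientations of the complete hyperbolic structure on $\Sth \setminus 6_2^2$ and so are not Dehn filling points of $c_2$. For each of the remaining four values of $\sigma_2$, I would verify by direct computation that $1 - w = \bar{w}$ and $1 - x = \bar{x}$, and that $\{|w|^2, |x|^2\} = \{2 + \sqrt{3}, 2 - \sqrt{3}\}$. Consequently $\mu(\mathfrak{m}_2^{-1}\mathfrak{l}_2) = (w(1-w))^2 = |w|^4 = (2+\sqrt{3})^2 = 7+4\sqrt{3}$, while $|\mu(\mathfrak{m}_2)| = |x|^2 |1-w|^2 = |x|^2 |w|^2 = 1$. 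Taking absolute values in the surgery equation \Cref{surgery eqn} then forces $(7+4\sqrt{3})^q = 1$, hence $q = 0$, contradicting the standing hypothesis.

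The main obstacle is essentially bookkeeping: confirming in each of the four non-complete cases that the specific identities above hold. As a sanity check, the same computation also gives $\mu(\mathfrak{m}_2) = -1$ when $\sigma_2 = -\omega^2$ and $\mu(\mathfrak{m}_2) = \omega^2$ when $\sigma_2 = i$, so by primitivity in the surgery equation one recovers precisely the $(2,0)$ and $(3,0)$ fillings already identified in \Cref{gable}.
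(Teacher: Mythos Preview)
Your proof is correct and follows essentially the same route as the paper's: handle $q=0$ and $p=2q$ via \Cref{gable}, then use \Cref{aiu} and \Cref{carrie} to force $\sigma_2$ to be a non-real unit in $\mathbb{Q}(i)$ or $\mathbb{Q}(\sqrt{-3})$, leaving a short finite list to check. The only difference is the final verification. The paper reduces by complex conjugation to three values $\sigma_2\in\{\zeta_6,\zeta_6^2,i\}$ and simply identifies each, via the computations already done in \Cref{gable}, with the $(2,0)$-filling, the complete structure, and the $(3,0)$-filling respectively. You instead treat all six values, dispose of $\omega,\omega^2$ as the complete structure, and for the remaining four use the pleasant observation that $\mathrm{Re}(\sigma_1)=1$ with negative real discriminant forces $1-w=\bar w$ and $1-x=\bar x$, so that $\mu(\mathfrak{m}_2^{-1}\mathfrak{l}_2)=|w|^4$ is a positive real $\ne 1$ while $|\mu(\mathfrak{m}_2)|=|wx|^2=|\sigma_2|^2=1$; taking absolute values in the surgery equation then gives $q=0$ directly. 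This is a bit more computational but arguably more self-contained, since it does not lean on the implicit fact that a point of $V_0$ determines its Dehn-filling slope uniquely. One small imprecision: depending on which root is labeled $w$, $|w|^4$ can be $(2-\sqrt 3)^2$ rather than $(2+\sqrt 3)^2$, but either way it is a positive real $\ne 1$ and the conclusion is unaffected.
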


\begin{proof} By \Cref{gable}, $\mathbb{Q}(\sigma_1)$ is not quadratic imaginary at any $(2q,q)$-fillings of $c_2$, and the only $(p,0)$-fillings for which it is are $p=2,3$, as described above where $\sigma_1 = 1+\sqrt{-3}$. In all remaining cases, $\mathbb{Q}(\sigma_1) = \mathbb{Q}(\sigma_2)$ by \Cref{carrie}, and $\sigma_2$ is a unit algebraic integer by \Cref{aiu}. We thus need only consider the cases that $\sigma_1$ lies in $\mathbb{Q}(i)$ or $\mathbb{Q}(\sqrt{-3})$, as these are the only quadratic imaginary fields with unit algebraic integers not equal to $\pm 1$.

Because $\sigma_1$ is not real at any hyperbolic Dehn filling, we can always trade $(w,x)$ for $(\bar{w}, \bar{x})$ to assume the imaginary part of $\sigma_1$ is positive. %Moreover, given the observation below \Cref{act}, we need only consider $\sigma_2$ for which $\sigma_1$ lies in the upper half-plane, since we may always trade $(x,w)$ for $(\bar{x},\bar{w})$ to make the imaginary part of $\sigma_1 = x+w$ non-negative, and $\sigma_1$ is not real at any hyperbolic Dehn filling. 
This leaves three cases to consider: $\sigma_2 = \zeta_6$ or $\zeta_6^2$, which each yield $\sigma_1 = \sigma_2+1-1/\sigma_2 = 1+\sqrt{-3}$; and $\sigma_2 = i$ which gives $\sigma_1 = 2i+1$. From the proof of \Cref{gable} we see that the cases $\sigma_2 = \zeta_6$ and $\sigma_2 = i$ correspond to the $(2,0)$- and $(3,0)$-fillings of $c_2$, respectively. The final case $\sigma_2 = \zeta_6^2$ corresponds to the complete structure of $S^3-6_2^2$, where $x = w = \zeta_6$.\end{proof}

\begin{figure}
\includegraphics[scale=.60]{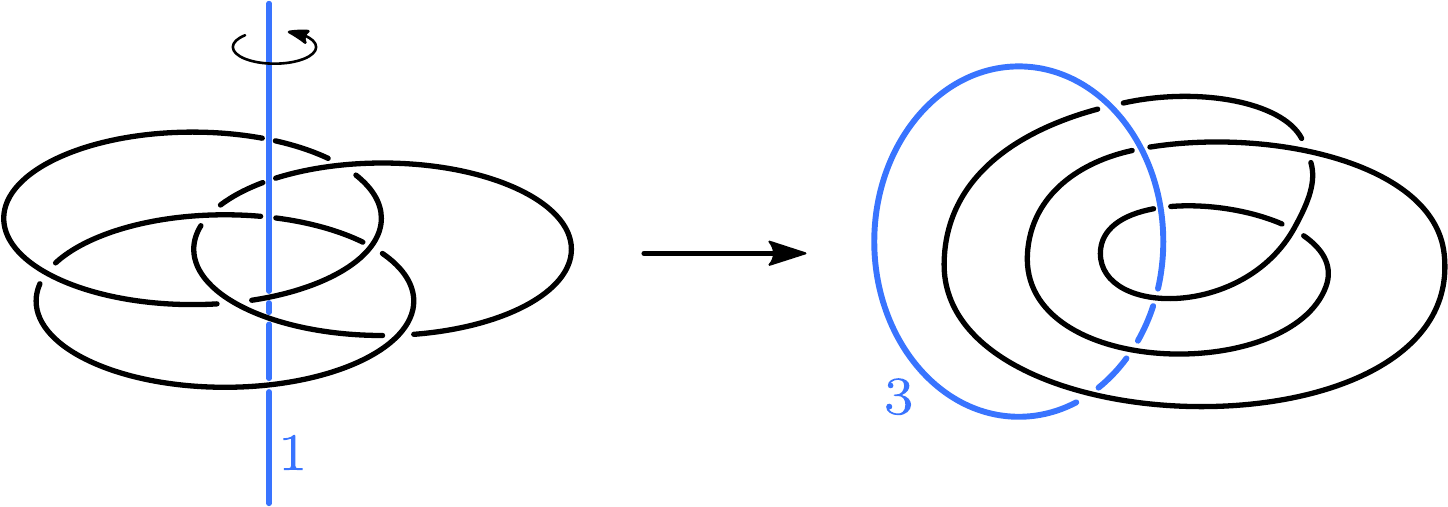}
\caption{\label{fig:borrothreefold} $(3,0)$ filling of $6^2_2$ as $3$-fold quotient of the Borromean rings link complement.}
\end{figure}

\begin{cor}\label{arith}
The $(2,0)$ and $(3,0)$ fillings of $c_2$ are the only arithmetic orbifolds obtained from $6^2_2$ by Dehn filling $c_2$. 
\end{cor}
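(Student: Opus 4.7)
The plan is to use \Cref{gary} to cut down to only two candidate fillings, and then verify each of them is actually arithmetic by exhibiting it in the commensurability class of a known arithmetic manifold.

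First I would argue that any arithmetic filling must appear on the list in \Cref{gary}. The invariant trace field of an arithmetic Kleinian group with a cusp is an imaginary quadratic field, and it contains (indeed coincides with, after adjoining cusp parameters) the cusp field. In particular, the cusp field of any arithmetic orbifold filling of $c_2$ must be imaginary quadratic. By \Cref{gary}, the only fillings of $c_2$ for which $\mathbb{Q}(\sigma_1)$ is imaginary quadratic are the $(2,0)$ and $(3,0)$ fillings, so these are the only candidates.

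Next I would verify that each of these two fillings is arithmetic, using the fact that arithmeticity is a commensurability invariant. For the $(2,0)$ filling, we have already noted in the previous subsection that it is doubly covered by the figure-eight knot complement. Since the figure-eight knot complement is arithmetic (indeed, the unique arithmetic knot complement), its quotient $(2,0)$-filling of $c_2$ is arithmetic as well. For the $(3,0)$ filling, \Cref{fig:borrothreefold} displays it as a $3$-fold orbifold quotient of the Borromean rings complement; since the Borromean rings complement is arithmetic (commensurable with $\mathrm{PSL}_2(\mathcal{O}_1)$, see e.g.\ \cite{MR03}), the $(3,0)$ filling is likewise arithmetic.

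The main obstacle, if any, is the verification of the covering relationship underlying the $(3,0)$ case: one needs to confirm that the picture in \Cref{fig:borrothreefold} really does realize the $(3,0)$ filling of $c_2$ as a $\mathbb{Z}/3\mathbb{Z}$ quotient of the Borromean rings complement. This can be done directly by identifying the order-$3$ rotational symmetry of the Borromean rings that permutes two of its components cyclically while fixing the third setwise, and checking that its quotient has underlying link complement $\Sth\setminus 6^2_2$ with a $\mathbb{Z}/3\mathbb{Z}$ singular axis whose boundary slope on $c_2$ is the $(3,0)$ curve in the chosen framing; alternatively one can verify that $\sigma_1=1+2i$ matches the cusp parameter of the corresponding cusp of the Borromean rings quotient. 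With these two coverings in hand, combining them with the reduction from \Cref{gary} completes the proof.
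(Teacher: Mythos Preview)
Your proposal is correct and follows essentially the same approach as the paper: use \Cref{gary} together with the fact that a cusped arithmetic orbifold has imaginary quadratic invariant trace field (hence imaginary quadratic cusp field) to reduce to the two candidates, then certify each as arithmetic via the figure-eight and Borromean rings covers. One small slip: an order-$3$ rotation cannot ``permute two of its components cyclically while fixing the third setwise''; the relevant symmetry cyclically permutes all three components of the Borromean rings, which is consistent with the quotient having a single torus cusp.
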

\begin{proof}
If $(p,q)$ filling of $c_2$ produces an arithmetic orbifold then by Proposition 4.4(a) of \cite{NeumReid}, the invariant trace field of the orbifold is quadratic imaginary.  Since the cusp field can't be $\Q$, Proposition 2.7 of \cite{NeumReid} implies $\mathbb{Q}(\sigma_1)$ is quadratic imaginary as well. So, \Cref{gary} implies that $(2,0)$ and $(3,0)$ are the only possible candidates for arithmeticity. The figure-eight knot complement is a two-fold cyclic cover of the $(2,0)$-filling of $c_2$, and as shown in \Cref{fig:borrothreefold}, the Borromean rings complement is a three-fold cyclic cover of the $(3,0)$-filling.  It follows that these fillings are arithmetic, as the covering knot and link complements are.
\end{proof}
%}

\begin{proof}[Second proof of \Cref{sixtwotwo}] Any manifold covering a filling of one cusp of $\Sth\setminus 6^2_2$ covers a filling of the cusp $c_2$, since $\Sth\setminus 6^2_2$ has a cusp-exchanging involution. We have already observed that the figure-eight knot complement covers the $(2,0)$-filling of $c_2$. The $(3,0)$-filling is not covered by a knot complement since the components of $6^2_2$ have linking number three, see \Cref{prop:linking_number}. 

For any other $(p,q)$, if the $(p,q)$-filling of $c_2$ were covered by a knot complement $M$ with hidden symmetries then the (unique) minimal orbifold in the commensurability class of this (non-arithmetic) filling would have a rigid cusp, since $M$ covers an orbifold with a rigid cusp by \cite[Prop.~9.1]{NeumReid}. It would follow that the cusp field $\mathbb{Q}(\sigma_1)$ was $\mathbb{Q}(i)$ or $\mathbb{Q}(\sqrt{-3})$ at this filling, contradicting \Cref{gary}.
\end{proof}

We now make precise the relationship between $\mathbb{Q}(\sigma_2)$ and the invariant trace field for hyperbolic Dehn fillings of $\Sth\setminus 6^2_2$, and thus in consequence, between the cusp and invariant trace fields of these manifolds. %For this argument, we assume the reader is familiar with degree-one ideal triangulations in the sense of Neumann--Yang, \cite[\S 2.1]{NeumYang}. \neil{The properties relevant here are that every cusped hyperbolic 3-manifold $M$ admits a geometric cell-decomposition that can be refined to an ideal triangulation of $M$ that is semi-geometric, ie. all tetrahedra have dihedral angles which are non-negative and at least one tetrahedron has all positive angles. *** Actually, I am back to being confused. In the argument below, we are using the degree-one triangulation for the (p,q) Dehn filling of the $6^2_2$ complement coming from deforming the complete structure. This is triangulation is does not fit the Epstein-Penner picture right? For that don't we need genuine complete structure? It's possible that this has been extended somewhere, but I am not sure where. *** }\jason{The E-P thing is an aside for us; we don't need to appeal to it. I'll jump on the file later and try to clarify the proof strategy.}

\begin{figure} 
\begin{tikzpicture}
\begin{scope}[xscale=1.875,yscale=1.125]
\draw  (-.3,.5)--(.3,.5);
\draw [dashed](-.65,1)--(.65,1);
\draw (-.3,.5)--(-.65,1);
\draw (.3,.5)--(.65,1);
\draw (-.65,1)--(-.85,1.65);
\draw (.85,1.65)--(.65,1);
\draw (0,3)--(-.65,1);
\draw (0,3)--(-.85,1.65);
\draw (0,3)--(.85,1.65);
\draw (0,3)--(.65,1);
\draw[dashed] (-.85,1.65)--(.85,1.65);
\draw [dashed](-.65,1)--(.3,.5);
\draw [dashed](-.85,1.65)--(.65,1);
\draw (0,3)--(-.3,.5);
\draw (0,3)--(.3,.5);
\node at (-.35,.33){\tiny{$v_0$}};
\node at (.35,.33){\tiny{$v_1$}};
\node at (-.80,1){\tiny{$v_6$}};
\node at (.80,1.05){\tiny{$v_2$}};
\node at (-1,1.65){\tiny{$v_5$}};
\node at (1, 1.65){\tiny{$v_3$}};
\node at (0,3.1){\tiny{$v_4$}};
\node at (-0.25,1.93){\small{$A$}};
\node at (0.25,1.93){\small{$A'$}};
\node at (-0.5,2){\small{$B$}};
\node at (0.5,2){\small{$B'$}};
\node at (0,1.89){\small{$C$}};
\node at (.67, 2.8){\small{$C'$}};
\draw (.55,2.8) arc  (65:116:.45);
\draw[densely dotted,->] (.15,2.79) arc (116:143:.45);
\draw[densely dotted,<-] (-.3,.63) arc (65:85:.45);
\draw[] (-.46,.67) arc (85:115:.45);
\draw[densely dotted,<-] (-.57,1.38) arc (65:95:.45);
\draw[] (-.8,1.42) arc (95:135:.45);
\node at (-1.15, 1.35){$E'$};
\node at (-.8, .65){$D$};
\draw (1.15,1.38) arc  (65:112:.45);
\draw[densely dotted,->] (.78,1.39) arc (112:138:.45);
\node at (1.24, 1.33) {$D'$};
\draw (.82,.82) arc  (65:100:.45);
\draw[densely dotted,->] (.53,.85) arc (100:125:.45);
\node at (.88, .75) {$E$};

\end{scope}
\end{tikzpicture}
\caption{$6^2_2$ as quotient of ideal decahedron}
\label{deca}
\end{figure}
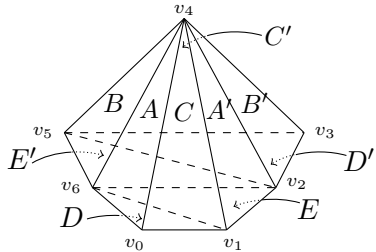

\begin{prop}\label{trfld}
For an orbifold $M$ obtained by $(p,q)$-hyperbolic Dehn filling on $c_2$, the invariant trace field and trace field of $M$ both equal $\Q(\sigma_2)$. 
\end{prop}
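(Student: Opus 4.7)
The plan is to prove the chain
\[ \Q(\sigma_2)\subseteq k(M)\subseteq k_\Gamma(M)\subseteq \Q(\sigma_2), \]
where $k(M)$ denotes the invariant trace field and $k_\Gamma(M)$ the trace field. Equality throughout would then give the proposition.

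For the lower bound $\Q(\sigma_2)\subseteq k(M)$, I would combine \Cref{cusp field} with \Cref{carrie}. \Cref{cusp field} identifies the cusp field of the unfilled cusp $c_1$ as $\Q(\sigma_1)$, and \Cref{carrie}---supplemented by a direct inspection of the exceptional cases $q=0$ and $p=2q$ whose explicit values of $\sigma_1$ and $\sigma_2$ are computed inside the proof of \Cref{gable} (with the non-hyperbolic cases $(1,0)$ and $(1,1)$ discarded)---gives $\Q(\sigma_1)=\Q(\sigma_2)$ at every hyperbolic filling point of $c_2$. The standard Neumann--Reid fact that the cusp field of a torus cusp is contained in the invariant trace field of a finite-volume orientable hyperbolic $3$-orbifold then supplies the bound.

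For the upper bound $k_\Gamma(M)\subseteq\Q(\sigma_2)$, my plan is to produce an explicit discrete faithful representation $\rho_M\co\pi_1^{orb}(M)\to \PSL_2(\Q(\sigma_2))$; reading off traces would then give the bound in one stroke. A straightforward construction using the tetrahedra of $\mathcal{T}_4$ at the filling point $(w_0,x_0)\in V_0$ only places the matrix entries in $\Q(w_0,x_0)$, an extension of $\Q(\sigma_2)$ of degree at most two because $w_0$ and $x_0$ are the two roots of $T^2-\sigma_1 T+\sigma_2\in\Q(\sigma_2)[T]$ and $\sigma_1=\sigma_2+1-1/\sigma_2$ on $V_0$.

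The main obstacle is absorbing the potential square root $\pm(w_0-x_0)=\sqrt{\sigma_1^2-4\sigma_2}$. My preferred route is to exploit the cusp-exchanging involution $\iota$ of $\Sth\setminus 6_2^2$, which acts on $V_0$ by $w\leftrightarrow x$ and so swaps the two filling points $(w_0,x_0)$ and $(x_0,w_0)$ representing the same orbifold $M$ (with the cusps relabelled). The decahedral decomposition of \Cref{deca} makes this symmetry manifest: $\iota$ acts as a combinatorial symmetry of the decahedron, and realizing the decahedron in $\H^3$ so that $\iota$ becomes a Euclidean symmetry forces the face-pairing M\"obius transformations to have entries rational in the $\iota$-invariants $w_0+x_0=\sigma_1$ and $w_0x_0=\sigma_2$, hence in $\Q(\sigma_2)$. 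Verifying that the filling-slope element lies in the kernel reduces to checking that $\mu(\mathfrak{m}_2)$ and $\mu(\mathfrak{m}_2^{-1}\mathfrak{l}_2)$---the peripheral holonomies recorded before \Cref{surgery eqn}---take root-of-unity values at $(w_0,x_0)$, which is exactly the content of that equation. The technically delicate step, and where I would expect most of the work to go, is checking that an $\iota$-equivariant developing image of the decahedron actually exists at an arbitrary point of $V_0$ (rather than only at the fixed locus $w=x$), and then verifying that the resulting $\rho_M$ is discrete and faithful; both of these should follow from the rigidity of hyperbolic structures once the algebraic computation is in hand.
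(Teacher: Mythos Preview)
Your overall chain $\Q(\sigma_2)\subseteq k(M)\subseteq k_\Gamma(M)\subseteq \Q(\sigma_2)$ is the right shape, but both endpoints are handled much more directly in the paper, and your lower bound has a genuine gap.

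\textbf{Lower bound.} You reduce $\Q(\sigma_2)\subseteq k(M)$ to the claim $\Q(\sigma_1)=\Q(\sigma_2)$ at every hyperbolic filling point. But \Cref{carrie} only covers $q\ne 0$, and \Cref{gable} only computes explicit values of $(\sigma_1,\sigma_2)$ for $p\le 3$ and argues about degrees for $4\le p\le 6$; for $(p,0)$ with $p>6$ the equality $\Q(\sigma_1)=\Q(\sigma_2)$ is never established. Indeed \Cref{gareth}, which is \emph{deduced from} \Cref{trfld}, restricts itself to $q\ne 0$ precisely because this is the range in which \Cref{carrie} applies. So your ``direct inspection'' does not close the case, and the logical flow is inverted relative to the paper. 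The paper avoids \Cref{carrie} altogether: from the explicit generator $e$ one has $\mathrm{tr}(e^2)=\sigma_2^2-2\in k(M)$ by definition of the invariant trace field, while the cusp-field inclusion gives $\sigma_1\in k(M)$; the identity $\sigma_2=(\sigma_2^2-1)/(\sigma_1-1)$ (immediate from $f=0$) then forces $\sigma_2\in k(M)$.

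\textbf{Upper bound.} You aim for a representation with \emph{matrix entries} in $\Q(\sigma_2)$ and therefore worry about absorbing $w-x=\pm\sqrt{\sigma_1^2-4\sigma_2}$ via an $\iota$-equivariant embedding of the decahedron. This is unnecessary. The paper simply places the decahedron with $v_0=0$, $v_1=1$, $v_4=\infty$, writes down the two face-pairing generators $a,e\in\mathrm{SL}_2\bigl(\Q(w,x)\bigr)$, and computes $\mathrm{tr}(a)=2$, $\mathrm{tr}(e)=wx=\sigma_2$, $\mathrm{tr}(ae)=w+x=\sigma_1$. For a two-generator Kleinian group these three traces generate the trace field over $\Q$ (this is \cite[eq.~3.25]{MR03}), so $k_\Gamma(M)=\Q(\sigma_1,\sigma_2)=\Q(\sigma_2)$. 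Traces are conjugation-invariant and here visibly symmetric in $w$ and $x$, so the square root $w-x$ never appears; there is no need to find an $\iota$-symmetric developing map or to check discreteness/faithfulness separately.
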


Our strategy of proof, which again echoes that of Neumann--Reid in \cite{NeumReid} (see Theorem 6.2 there), is as follows. For an orbifold $M$ as above, corresponding to a point $(w,x)\in V_0$, we use the parameters $x$ and $w$ to arrange isometric copies of the tetrahedra decomposing $\Sth\setminus 6^2_2$ in $\mathbb{H}^3$ so that their union is the polyhedron of \Cref{deca}. Taking the face-pairings of this polyhedron that yield $\Sth\setminus 6^2_2$ as generators of its fundamental group, we thus obtain explicit descriptions (in terms of $x$ and $w$) of their holonomy representations determined by the incomplete hyperbolic structure whose completion is $M$. This representation factors through a surjection to $\pi_1^{\mathit{orb}}(M)$, so the face-pairing generators generate the quotient group, and we can obtain the trace field of $M$ in terms of their traces and those of their products.

\begin{proof}
We may recover the face pairings of the triangulation $\mathcal{T}_4$ of $\Sth\setminus 6^2_2$ from the cusp triangulation pictures in \Cref{fig:cusp_triangs_1}. Assembling its four ideal tetrahedra so that they share an ideal vertex corresponding to the cusp $c_1$, with a horospherical cross-section around this ideal vertex as on the left side of \Cref{fig:cusp_triangs_1}, produces the ideal decahedron of \Cref{deca}. We produce $\Sth-6^2_2$ by identifying faces in pairs: $A$ with $A'$ and so on, so that pairings of faces containing the ideal vertex $v_4$ take it to itself, the pairing of $D$ with $D'$ takes $v_0$ to $v_3$, and that of $E$ with $E'$ takes $v_1$ to $v_6$. Then $v_4$ corresponds to $c_1$ and all other vertices to $c_2$. 

For any $(x,w)\in V_0$, the edge parameters $x$, $y$, $z$ and $w$ labeled in \Cref{fig:cusp_triangs_1} determine the tetrahedra comprising the decagon up to isometry, where $z = 1/(1-w)$ and $y=xw(1-w) = 1/(1-x)$ (see \Cref{defection}). Embedding the resulting decahedron in $\mathbb{H}^3$ with $v_0 = 0$, $v_1=1$ and $v_4 = \infty$ thus yields $v_3=1+x+w$, $v_5=x+w$, and $v_6=x$. This in turn determines isometries $a$, $b$, $c$, $d$, and $e$ realizing the face pairings sending $A$ to $A'$, $B$ to $B'$, $C$ to $C'$, $D$ to $D'$, and $E$ to $E'$, respectively. In particular, we have the matrix representatives below in $\mathrm{SL}(2,\mathbb{C})$. (We have used Mathematica \cite{Mathematica} to assist with some computations here.)
\begin{align}
a=\begin{pmatrix}1 & 1\\
0& 1
\end{pmatrix}, \quad e=\begin{pmatrix} w+wx+x^2-wx^2 & -w-wx-x^3+wx^3\\
w+x-wx & -w-x^2+wx^2\end{pmatrix}.
\end{align}

%One can see the four regular ideal tetrahedra of the triangulation $\mathcal{T}_4$ in the ideal decahedron as $[v_0, v_4,v_1, v_6]$, $[v_6,v_4,v_1,v_2]$, $[v_2,v_4,v_5,v_6]$, $[v_3,v_4,v_5,v_2]$. The edge invariants of edges $[v_0,v_4]$ in $[v_0, v_4,v_1, v_6]$, $[v_6,v_4]$ in  $[v_6,v_4,v_1,v_2]$, $[v_2,v_4]$ in  $[v_2,v_4,v_5,v_6]$, and $[v_3,v_4]$ in  $[v_3,v_4,v_5,v_2]$ are respectively $x$, $y$, $z$, and $w$. So, for the incomplete structure corresponding to $M$, we can assume that $v_0=0$, $v_1=1$, $v_2=1+x$, $v_3=1+x+w$, $v_4=\infty$, $v_5=x+w$, $v_6=x$. Let $a$, $b$, $c$, $d$, and $e$ denote the face pairing isometries sending respectively $A$ to $A'$, $B$ to $B'$, $C$ to $C'$, $D$ to $D'$, and $E$ to $E'$.

For each such $(x,w)\in V_0$, the face-pairing isometries generate the image of $\Gamma \doteq \pi_1(\Sth\setminus 6^2_2)$ under the holonomy representation determined by the hyperbolic structure on $\Sth\setminus 6^2_2$ corresponding to $(x,w)$. At the complete structure $x = w = \frac{1}{2}(1+\sqrt{-3})$, the Poincar\'e polyhedron theorem yields a presentation for $\Gamma$:
\[ \Gamma \cong \langle a, e \,|\,ae^2 a^{-1}e^{-1}a^{-1}e^2=e^2a^{-1}e^{-1}a^{-1}e^2a \rangle. \]
(Specifically, it gives $b=a$, $[a,c]=1$, $d = e^2$, and $c = db^{-1}e^{-1}a^{-1}d$.) 

Now fix $(x,w)\in V_0$ corresponding to a $(p,q)$-hyperbolic Dehn filling of $c_2$, and let $\Gamma_{p,q}$ be the orbifold fundamental group of the resulting hyperbolic orbifold $M$. The corresponding holonomy representation factors through a surjection $\Gamma\to\Gamma_{p,q}$, so as a Kleinian group in $\mathrm{PSL}(2,\mathbb{C})$, $\Gamma_{p,q}$ is generated by the cosets of the matrices $a$ and $e$ described above. The traces of $a$, $e$, and $ae$ thus generate the trace field of $\Gamma_{p,q}$ as an extension of $\mathbb{Q}$ (see equation 3.25 in \cite{MR03}). These are respectively $2$, $wx = \sigma_2$, and $w+x = \sigma_1$, so since $\sigma_1\in\mathbb{Q}(\sigma_2)$ this is the trace field of $\Gamma_{p,q}$.

%Let $\Gamma_{p,q}$ denote the fundamental group of the hyperbolic manifold $M$ obtained by $(p,q)$ Dehn filling on $c_2$. By the Poincar\'e Polyhedron Theorem (see for example \cite{epstein-petronio}), we see that $\Gamma_{p,q}=<a, e:ae^2 a^{-1}e^{-1}a^{-1}e^2=e^2a^{-1}e^{-1}a^{-1}e^2a >$. Using \will{Mathematica} \cite{Mathematica}, we write $a$ and $e$ as \neil{(cosets of)} matrices in $SL(2,\C)$:

The invariant trace field of $\Gamma_{p,q}$ contains the trace of $e^2$ by definition. As this is $\sigma_2^2-2$, the invariant trace field contains $\sigma_2^2$. By Proposition 2.7 of Neumann-Reid \cite{NeumReid} the cusp field of $M$, which here is $\Q(\sigma_1)$, is a subfield of the invariant trace field of $M$. So, $\sigma_2=\frac{\sigma_2^2-1}{\sigma_1-1}$ is contained in the invariant trace field.  Since the invariant trace field is contained in the trace field, it is also $\Q(\sigma_2)$.
\end{proof}

\begin{cor}\label{gareth} For each $(p,q)\in\mathbb{Z}^2$ such that $q\ne 0$, if $M$ is obtained from $\Sth\setminus 6^2_2$ by $(p,q)$-hyperbolic filling of one cusp then the invariant trace field of $M$ equals its cusp field.\end{cor}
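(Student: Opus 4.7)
The plan is to combine the two key ingredients already established. By Proposition \ref{trfld}, for any $M$ obtained by $(p,q)$-hyperbolic Dehn filling on the cusp $c_2$, both the trace field and the invariant trace field of $M$ equal $\mathbb{Q}(\sigma_2)$. On the other hand, by Remark \ref{cusp field}, the cusp field of $M$ (i.e., the cusp field of the unfilled cusp $c_1$) is $\mathbb{Q}(\sigma_1)$, where $(\sigma_1,\sigma_2) \in U_0$ is the point corresponding to $M$. So the content of the corollary reduces to showing that $\mathbb{Q}(\sigma_1) = \mathbb{Q}(\sigma_2)$ whenever $q \neq 0$.

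The first step is to reduce to the case that the filling is done on $c_2$ rather than $c_1$. The manifold $\Sth\setminus 6^2_2$ admits a cusp-exchanging involution (cf.~\Cref{fig:sixtwotwo_isom}), so a $(p,q)$-filling of $c_1$ is homeomorphic to a $(p,q)$-filling of $c_2$ in the meridian-longitude framing set up in \Cref{triang}. With this reduction, the point $(\sigma_1,\sigma_2)$ corresponding to $M$ satisfies $h_{(p,q)}$ by \Cref{barry}.

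Next, I would apply Proposition \ref{carrie}: provided $q \neq 0$ and $(p,q) \neq (1,1)$, it asserts precisely that any $(\sigma_1,\sigma_2)\in U_0$ satisfying $h_{(p,q)}$ has $\mathbb{Q}(\sigma_1) = \mathbb{Q}(\sigma_2)$. Finally, the excluded case $(p,q) = (1,1)$ does not arise under the hypotheses of the corollary, because Remark \ref{no} shows that the $(1,1)$-filling of $c_2$ is not hyperbolic, so there is no hyperbolic $M$ to consider in that case.

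The proof is essentially a bookkeeping argument assembling prior results, so there is no substantive obstacle once Propositions \ref{carrie} and \ref{trfld} are in hand. The only mild subtlety is confirming that the cusp-exchanging symmetry lets us restrict to fillings of $c_2$ without loss of generality, and that the one degenerate case left unresolved by Proposition \ref{carrie} (namely $(p,q)=(1,1)$) is automatically excluded by the hyperbolicity hypothesis on $M$.
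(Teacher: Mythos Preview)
Your proposal is correct and follows essentially the same approach as the paper's proof: combine \Cref{trfld} (invariant trace field equals $\mathbb{Q}(\sigma_2)$), the identification of the cusp field as $\mathbb{Q}(\sigma_1)$, and \Cref{carrie} (which gives $\mathbb{Q}(\sigma_1)=\mathbb{Q}(\sigma_2)$ when $q\neq 0$). If anything, you are slightly more careful than the paper, which does not explicitly address the reduction to $c_2$ via the cusp-exchanging involution or the excluded case $(p,q)=(1,1)$ from \Cref{carrie}; your observation that \Cref{no} disposes of $(1,1)$ via non-hyperbolicity is exactly right.
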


\begin{proof}If $M$ is the hyperbolic orbifold obtained by $(p,q)$ Dehn filling on one cusp of $6^2_2$, where $q\ne 0$, then by \Cref{carrie}, $\Q(\sigma_1)=\Q(\sigma_2)$. Now, since the cusp field of $M$ is $\Q(\sigma_1)$, and by \Cref{trfld} above the invariant trace field of $M$ is $\Q(\sigma_2)$, we conclude that the invariant trace field is equal to the cusp field for $M$. \end{proof}

The polyhedra decomposing $\Sth\setminus 6^2_2$ determine a \textit{degree-one triangulation}, in the sense of Neumann--Yang \cite[\S 2.1]{NeumYang}, for the surgered orbifold $M$. For a cusped hyperbolic 3-manifold with a genuine ideal triangulation, the \textit{shape field} generated by the tetrahedral parameters equals the invariant trace field by \cite[Theorem~2.4]{NeumReid}. But we conclude by observing:

\begin{remark}
The $(4,3)$ filling on $c_2$ in $\Sth \setminus 6^2_2$ results in the manifold $m121$ in the SnapPy census, which has a genuine ideal triangulation by five tetrahedra. The shape field of this triangulation, and therefore the invariant trace field of $m121$, is a degree $5$ extension of $\Q$, whereas the shape field $\mathbb{Q}(x,w)$ of the degree-one triangulation it inherits from $\Sth\setminus 6^2_2$ has degree $10$ over $\Q$. Thus the tetrahedral shape field of a degree-one ideal triangulation of a cusped hyperbolic manifold may in general be strictly larger than its invariant trace field.
\end{remark}

%\christian{Do we want a concluding paragraph and/or remark for this section that ties in Corollary 3.17 as capping off another proof of our main theorem for this section? Something along the lines of ``\cite[Corollary 2.2]{ReidWalsh} and the recent work of Hoffman \cite{Hoffman_rigid_cusps} combined show that any hyperbolic knot complement that admits hidden symmetries has cusp field $\mathbb{Q}(\sqrt{-3})$. Thus, \Cref{gary} now implies \Cref{sixtwotwo}.''}

%%%%%%%%%%%%%%%%
\bibliographystyle{plain}
%\bibliography{hidden_bib}   
\bibliography{LimitOrbHiddenSymms_arxiv_v2.bbl}
\end{document}